\newcommand\V{\mathsf{V}}
\renewcommand{\L}{\mathcal{L}}
\newcommand{\C}{\mathcal{C}}
\newcommand{\E}{\mathcal{E}}
\newcommand{\PP}{\mathbb{P}}
\newcommand{\QQ}{\mathbb{Q}}
\newcommand{\BB}{\mathbb{B}}
\newcommand{\MM}{\mathbb{M}}
\newcommand{\FF}{\mathbb{F}}
\newcommand{\anf}[1]{{\text{``}\hspace{0.3ex}{#1}\hspace{0.01ex}\text{''}}}
\newcommand{\op}{\mathsf{op}}
\newcommand{\rnk}{\mathsf{rnk}}
\newcommand{\crnk}[2]{\mathsf{rnk}_{#1}(#2)}
\newcommand{\range}{\operatorname{range}}
\newcommand{\rank}{\operatorname{rank}}
\newcommand{\Col}{\operatorname{Col}}
\newcommand{\ra}{\rightarrow}
\newcommand{\Llr}{\Longleftrightarrow}
\newcommand{\tc}{\operatorname{tc}}
\newcommand{\tn}{\operatorname{tn}}
\newcommand{\one}{\mathbbm1}
\newcommand{\gbr}[1]{{\ulcorner{#1}\urcorner}}
\newcommand{\On}{{\mathrm{Ord}}}
\newcommand{\forces}{\Vdash}
\newcommand{\Fm}{{\mathrm{Fml}}}
\newcommand{\ZFC}{{\sf ZFC}}
\newcommand{\ZF}{{\sf ZF}}
\newcommand{\KM}{{\sf KM}}
\newcommand{\GB}{{\sf GB}}
\newcommand{\GBC}{{\sf GBC}}
\newcommand{\CH}{{\sf CH}}
\newcommand{\Set}[2]{\{{#1}~\vert~{#2}\}}
\newcommand{\ran}[1]{{{\rm{ran}}(#1)}}
\newcommand{\dom}[1]{{{\rm{dom}}(#1)}}
\newtheorem{theorem}{Theorem}[section]
\newtheorem{lemma}[theorem]{Lemma}
\newtheorem{corollary}[theorem]{Corollary}
\newtheorem{question}[theorem]{Question}
\newtheorem{observation}[theorem]{Observation}
\newtheorem{claim}{Claim}
\newtheorem*{claim*}{Claim}
\newtheorem*{subclaim*}{Subclaim}
\theoremstyle{definition}
\newtheorem{definition}[theorem]{Definition}
\newtheorem{example}[theorem]{Example}
\theoremstyle{remark}
\newtheorem{remark}[theorem]{Remark}
\newtheorem*{notation}{Notation}
\newenvironment{enumerate-(a)}{\begin{enumerate}[label={\upshape (\alph*)}, leftmargin=2pc]}{\end{enumerate}}
\newenvironment{enumerate-(a)-r}{\begin{enumerate}[label={\upshape (\alph*)}, leftmargin=2pc,resume]}{\end{enumerate}}
\newenvironment{enumerate-(A)}{\begin{enumerate}[label={\upshape (\Alph*)}, leftmargin=2pc]}{\end{enumerate}}
\newenvironment{enumerate-(A)-r}{\begin{enumerate}[label={\upshape (\Alph*)}, leftmargin=2pc,resume]}{\end{enumerate}}
\newenvironment{enumerate-(i)}{\begin{enumerate}[label={\upshape (\roman*)}, leftmargin=2pc]}{\end{enumerate}}
\newenvironment{enumerate-(i)-r}{\begin{enumerate}[label={\upshape (\roman*)}, leftmargin=2pc,resume]}{\end{enumerate}}
\newenvironment{enumerate-(I)}{\begin{enumerate}[label={\upshape (\Roman*)}, leftmargin=2pc]}{\end{enumerate}}
\newenvironment{enumerate-(I)-r}{\begin{enumerate}[label={\upshape (\Roman*)}, leftmargin=2pc,resume]}{\end{enumerate}}
\newenvironment{enumerate-(1)}{\begin{enumerate}[label={\upshape (\arabic*)}, leftmargin=2pc]}{\end{enumerate}}
\newenvironment{enumerate-(1)-r}{\begin{enumerate}[label={\upshape (\arabic*)}, leftmargin=2pc,resume]}{\end{enumerate}}
\begin{document}

\thanks{We would like to thank Maurice Stanley and Sy Friedman for providing us with information regarding the definability of the forcing relation in class forcing, and we would like to thank Victoria Gitman for helpful comments and discussions on some of the topics of this paper. We would also like to thank the referee for thoroughly reading through our paper, and for making many useful comments, that enabled us to very much improve this paper. The first and third author were partially supported by DFG-grant LU2020/1-1.}

\subjclass[2010]{03E40, 03E70} 

\keywords{Class forcing, Pretameness, Ord-cc, Forcing Theorem, Nice Names}

\author{Peter Holy}
\address{Peter Holy, Math.\ Institut, Universit\"at Bonn,
Endenicher Allee 60, 53115 Bonn, Germany}
\email{pholy@math.uni-bonn.de}
\urladdr{}

\author{Regula Krapf}
\address{Regula Krapf, Math.\ Institut, Universit\"at Koblenz-Landau,
Universit\"atsstra\ss e 1, 56070 Koblenz, Germany}
\email{krapf@uni-koblenz.de}
\urladdr{}

\author{Philipp Schlicht}
\address{Philipp Schlicht, Math.\ Institut, Universit\"at Bonn, 
Endenicher Allee 60, 53115 Bonn, Germany}
\email{schlicht@math.uni-bonn.de}
\urladdr{}

\date{\today}

\title{Characterizations of Pretameness and the Ord-cc}
\begin{abstract} 
It is well known that pretameness implies the forcing theorem, and that pretameness is characterized by the preservation of the axioms of $\ZF^-$, that is $\ZF$ without the power set axiom, or equivalently, by the preservation of the axiom scheme of replacement, for class forcing over models of $\ZF$. We show that pretameness in fact has various other characterizations, for instance in terms of the forcing theorem, the preservation of the axiom scheme of separation, the forcing equivalence of partial orders and their dense suborders, and the existence of nice names for sets of ordinals. These results show that pretameness is a strong dividing line between well and badly behaved notions of class forcing, and that it is exactly the right notion to consider in applications of class forcing. Furthermore, for most properties under consideration, we also present a corresponding characterization of the $\On$-chain condition. 
\end{abstract}

\maketitle


\section{Introduction}\label{sec:intro}
This paper is motivated by the question which properties of set forcing carry over to the context of class forcing, in particular for pretame notions of class forcing, and for notions of class forcing with the $\On$-chain condition. We show that the properties mentioned in the abstract hold for all pretame notions of class forcing, and that they can in fact be used to characterize pretameness and the $\On$-chain condition in various ways. A list of the main results of this paper can be found at the end of this section. In order to properly state them, we will need to introduce our setup together with some basic notation. This is mostly the same as in \cite{ClassForcing}.

\medskip

We will work with transitive second-order models of set theory, that is models of the form $\MM=\langle M,\C\rangle$, where $M$ is transitive and denotes the collection of \emph{sets} of $\MM$ and $\C$ denotes the collection of \emph{classes} of $\MM$.\footnote{Arguing in the ambient universe $\V$, we will sometimes refer to classes of such a model $\MM$ as sets, without meaning to indicate that they are sets of $\MM$. In particular this will be the case when we talk about subsets of $M$.} We require that $M\subseteq\C$ and that elements of $\C$ are subsets of $M$, and we call elements of $\C\setminus M$ \emph{proper classes} (of $\MM$).  Classical transitive first-order models of set theory are covered by our approach when we let $\C$ be the collection of classes definable over $\langle M,\in\rangle$. The theories that we will be working in will be fragments of \emph{Kelley-Morse set theory} $\KM$,
and mostly we will work within fragments of \emph{G\"odel-Bernays set theory} $\GB$. By $\GB^-$ we denote $\GB$ 
without the power set axiom (but with Collection rather than Replacement), and by $\GBC$ we denote $\GB$ together with the axiom of global choice.\footnote{For more details, see \cite{ClassForcing}. For a detailed axiomatization of $\KM$, see \cite{antos2015class}.} By a countable transitive model of some second-order theory, we mean a transitive second-order model $\MM=\langle M,\C\rangle$ of such a theory with both $M$ and $\C$ countable in $\V$. 
Most of the time, we will avoid using the power set axiom, however we will often make use of the following consequence of $\GB$:

\begin{definition}
 A model $\MM=\langle M,\C\rangle$ of $\GB^-$ \emph{has a hierarchy} if there is $C\in\C$ with $C\subseteq\On^M\times M$ such that for $\alpha\in\On^M$
 \begin{enumerate-(1)}
  \item $C_\alpha=\{x\mid\exists\beta<\alpha\;\langle\beta,x\rangle\in C\}\in M$;
  \item $M=\bigcup_{\alpha\in\On^M}C_\alpha$.
 \end{enumerate-(1)}
If $C$ is a hierarchy on $M$, then the \emph{$C$-rank} of $x\in M$, denoted $\crnk{C}{x}$, is the 
least $\alpha\in\On^M$ such that $x\in C_{\alpha+1}$. 
\end{definition}

\begin{remark}\label{rem:choice principles}
\begin{enumerate-(1)}
 \item Assume that $\MM=\langle M,\C\rangle\models\GB^-$. Suppose that $\C$ contains a well-order $\prec$ of $M$ which is \emph{set-like}, i.e.\ for every $x\in M$, $C_x=\{y\in M\mid y\prec x\}\in M$. Then the order-type of $\langle M,\prec\rangle$ is $\On^M$ and $C=\{\langle x,y\rangle\mid x\in\On^M\,\wedge\,y\prec x\}$ witnesses that $\MM$ has a hierarchy. 
 Conversely, it is easy to check that every well-order of $M$ in $\C$ that has order-type $\On^M$ is set-like.
 
 \item If $\MM\models\GB^-$ has a hierarchy and $\C$ contains a well-order of $M$, then $\C$ contains a set-like well-order of $M$. To see this, let $C\in\C$ be a hierarchy on $M$ and let $\prec\,\in\C$ be a well-order of $M$.
 Then we obtain a set-like well-order $\vartriangleleft$ of $M$ by stipulating that $x\vartriangleleft y$ if and only if $\crnk{C}{x}<\crnk{C}{y}$, or $\crnk{C}{x}=\crnk{C}{y}$ and $x\prec y$.

\item Without a hierarchy, the existence of a well-order of $M$ in $\C$ does not imply that $\C$ contains a well-order of $M$ of order-type $\On^M$. This can be seen as follows. Let $N$ be a countable transitive model of $\ZFC$ in which $\CH$ fails and such that there is a definable wellorder of $H(\omega_1)$; assuming the existence of a countable transitive model of $\ZFC$, such a model exists by \cite{MR0465866}. Let $M=H(\omega_1)^N$ and let $\C$ be the collection of subsets of $M$ that are definable over $M$. It follows that $\langle M,\C\rangle\models\GB^-$ and that $\C$ contains a well-order of $M$. Since $\CH$ fails in $N$, $\C$ however cannot contain a well-order of $M$ of order-type $\On^M$.
 
 \item If $\MM\models\GB^-$ has a hierarchy, then it satisfies \emph{representatives choice}, i.e. if for every equivalence relation 
$E\in\C$ there is $A\in\C$ and a surjective map $\pi:\dom E\ra A$ in $\C$ such that 
$\langle x,y\rangle\in E$ if and only if $\pi(x)=\pi(y)$.  
 Assuming the presence of a hierarchy, we will thus sometimes cite results from \cite{ClassForcing} that assume representatives choice, without further mention.

\end{enumerate-(1)}

\end{remark}

We define $\GBC^-$ to be the theory consisting of the axioms of $\GB^-$ together with the axiom postulating the existence of a set-like well-order. Note that by Remark \ref{rem:choice principles}, (3), this theory is strictly stronger than $\GB^-+$ global choice.

\medskip

Fix a countable transitive model $\MM=\langle M,\C\rangle$ of $\GB^-$. By a \emph{notion of class forcing} (for $\MM$) we mean a separative partial order $\PP=\langle P,\leq_\PP\rangle$ such that $P,\leq_\PP\,\in\C$.\footnote{Note that this differs from the definition of notions of class forcing in \cite{ClassForcing}, where we do not make the assumption of separativity (and in \cite{ClassForcing}, we also do not assume antisymmetry, i.e.\ we allow for the more general notion of a preorder).}
We will frequently identify $\PP$ with its domain $P$. In the following, we also fix a notion of class forcing $\PP=\langle P,\leq_\PP\rangle$ for $\MM$. Note that the assumption of separativity does not restrict us much, since if $\MM$ satisfies representatives choice, one can always pass from a notion of class forcing to its separative quotient (see \cite{ClassForcing}). 

We call $\sigma$ a \emph{$\PP$-name} if all elements of $\sigma$ are of the form $\langle\tau,p\rangle$, where 
$\tau$ is a $\PP$-name and $p\in\PP$. 
Define $M^\PP$ to be the set of all $\PP$-names that are elements of $ M$ and define $\C^\PP$ to be the set of all $\PP$-names that are elements of $\C$.
In the following, we will usually call the elements of $M^\PP$ simply \emph{$\PP$-names} and we will call the elements of $\C^\PP$ \emph{class $\PP$-names}.
If $\sigma\in M^\PP$ is a $\PP$-name, we define 
$$\rank\sigma=\sup\{\rank\tau+1\mid\exists p\in\PP\ \langle\tau,p\rangle\in\sigma\}$$ 
to be its \emph{name rank}. We will sometimes also need to use the usual set theoretic rank of some $\sigma\in M$, which we will denote as $\rnk(\sigma)$.

We say that a filter $G$ on $\PP$ is \emph{$\PP$-generic over $\MM$} if $G$ meets every dense subset of $\PP$ that is an element of $\C$. 
Given such a filter $G$ and a $\PP$-name $\sigma$, we recursively define the \emph{$G$-evaluation} of $\sigma$ as
$$\sigma^G=\{\tau^G\mid\exists p\in G\ \langle\tau,p\rangle\in\sigma\},$$
and similarly we define $\Gamma^G$ for $\Gamma\in\C^\PP$. Moreover, if $G$ is $\PP$-generic over $\MM$, then we set
$M[G]=\Set{\sigma^G}{\sigma\in M^\PP}$ and $\C[G]=\Set{\Gamma^G}{\Gamma\in\C^\PP}$. We call $\MM[G]=\langle M[G],\C[G]\rangle$ a \emph{$\PP$-generic extension} of $\MM$.

Given an $\L_\in$-formula $\varphi(v_0,\dots,v_{m-1},\vec\Gamma)$, where $\vec\Gamma\in(\C^\PP)^n$ are class name parameters, $p\in\PP$ and $\vec\sigma\in(M^\PP)^m$, we write
$p\Vdash_\PP^\MM\varphi(\vec\sigma,\vec\Gamma)$ if for every $\PP$-generic filter $G$ over $\MM$ with $p\in G$, $\langle M[G],\Gamma_0^G,\dots,\Gamma_{n-1}^G\rangle\models\varphi(\sigma_0^G,\dots,\sigma_{m-1}^G,\Gamma_0^G,\dots,\Gamma_{n-1}^G)$. 

A fundamental result in the context of set forcing is the \emph{forcing theorem}. It consists of two parts, the first one of which, the so-called \emph{definability lemma}, states that the forcing relation is definable in the ground model, and the second part, denoted as the \emph{truth lemma}, says that every formula which is true in a generic extension $M[G]$ is forced by some condition in the generic filter $G$. In the context of second-order models of set theory, this has the following natural generalization:

\begin{definition}\label{def:ft}
 Let $\varphi\equiv\varphi(v_0,\ldots,v_{m-1},\vec\Gamma)$ be an $\L_\in$-formula with class name parameters $\vec\Gamma\in(\C^\PP)^n$. 
 \begin{enumerate}
  \item We say that \emph{$\PP$ satisfies the definability lemma for $\varphi$ over $\MM$} if 
   \begin{equation*}
  \Set{\langle p,\sigma_0,\ldots,\sigma_{m-1}\rangle\in P\times(M^\PP)^m}{p\Vdash^\MM_\PP\varphi(\sigma_0,\ldots,\sigma_{m-1},\vec\Gamma)}\in\C. 
 \end{equation*} 
 \item We say that \emph{$\PP$ satisfies the truth lemma for $\varphi$ over $\MM$} if for all $\sigma_0,\ldots,\sigma_{m-1}\in M^\PP$, and every filter $G$ which is $\PP$-generic over $\MM$ with 
  \begin{equation*}
    \MM[G]\models\varphi(\sigma_0^G,\ldots,\sigma_{m-1}^G,\Gamma_0^G,\dots,\Gamma_{n-1}^G), 
  \end{equation*}
  there is $p\in G$ with $p\Vdash^{\MM}_\PP\varphi(\sigma_0,\ldots,\sigma_{m-1},\vec\Gamma)$.
  \item We say that \emph{$\PP$ satisfies the forcing theorem for $\varphi$ over $\MM$} if $\PP$ satisfies both the definability lemma and the truth lemma for $\varphi$ over $\MM$. We say that \emph{$\PP$ satisfies the forcing theorem} if $\PP$ satisfies the forcing theorem for all first order formulas over $\MM$.
 \end{enumerate}
 \end{definition}
 
We will frequently make use of the following observation.

\begin{observation}\label{defsubsetclasses}
  If $\PP$ is a notion of class forcing for $\MM$ that satisfies the forcing theorem, then 
  every class which is first-order definable over $\MM[G]$ is an element of $\C[G]$.
\end{observation}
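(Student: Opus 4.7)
The plan is to convert the defining formula into an explicit class $\PP$-name by combining both halves of the forcing theorem. Suppose $A \subseteq M[G]$ is first-order definable over $\MM[G]$, so I would write
$$A = \{x \in M[G] \mid \MM[G] \models \varphi(x, \vec\sigma^G, \vec\Gamma^G)\}$$
for some $\L_\in$-formula $\varphi$ and parameters $\vec\sigma \in (M^\PP)^m$, $\vec\Gamma \in (\C^\PP)^n$. Because $M \subseteq \C$ entails $M^\PP \subseteq \C^\PP$, I can treat $\vec\sigma$ and $\vec\Gamma$ uniformly as class name parameters when invoking the definability lemma.

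First, I would introduce the candidate name
$$\Delta = \{\langle \tau, p\rangle \mid \tau \in M^\PP,\ p \in \PP,\ p \Vdash_\PP^\MM \varphi(\tau, \vec\sigma, \vec\Gamma)\}.$$
By the definability lemma for $\varphi$ with fixed class name parameters $\vec\sigma, \vec\Gamma$, the class $\Delta$ belongs to $\C$, and by construction each element has the form $\langle \tau, p\rangle$ with $\tau \in M^\PP$ and $p \in \PP$, so in fact $\Delta \in \C^\PP$.

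Second, I would verify $\Delta^G = A$; since $\Delta^G \in \C[G]$, this finishes the argument. For $\Delta^G \subseteq A$: if $\tau^G \in \Delta^G$ is witnessed by $\langle \tau, p\rangle \in \Delta$ with $p \in G$, then $p \Vdash_\PP^\MM \varphi(\tau, \vec\sigma, \vec\Gamma)$ together with $p \in G$ yields $\MM[G] \models \varphi(\tau^G, \vec\sigma^G, \vec\Gamma^G)$, hence $\tau^G \in A$. For $A \subseteq \Delta^G$: given $x \in A$, choose any $\tau \in M^\PP$ with $\tau^G = x$, and apply the truth lemma to obtain some $p \in G$ with $p \Vdash_\PP^\MM \varphi(\tau, \vec\sigma, \vec\Gamma)$, so that $\langle \tau, p\rangle \in \Delta$ and $x = \tau^G \in \Delta^G$. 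There is no substantive obstacle here: the two clauses of the forcing theorem are precisely what is needed to realize first-order definable classes of $\MM[G]$ as evaluations of class $\PP$-names.
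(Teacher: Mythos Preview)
Your proof is correct and follows essentially the same route as the paper's: define the class name $\Delta=\{\langle\tau,p\rangle\mid p\Vdash_\PP^\MM\varphi(\tau,\vec\sigma,\vec\Gamma)\}\in\C^\PP$ via the definability lemma, and check that $\Delta^G$ equals the definable class in question. The paper's version is terser, simply asserting $\Gamma^G=\{y\mid\varphi(y,x,\vec C)\}$ without spelling out the two inclusions, whereas you make the roles of the semantic forcing relation and the truth lemma explicit; there is no substantive difference.
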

\begin{proof}
  Let $\varphi(v,x,\vec C)$ be a first-order formula with parameter $x\in M[G]$ and class parameters $\vec C\in(\C[G])^n$. Let $\sigma$ in $M^\PP$ be a name for $x$, let $\vec\Gamma\in(\C^\PP)^n$ be class names for the elements of $\vec C$.
Let $\Gamma=\{\langle\mu,p\rangle\mid p\Vdash_\PP\varphi(\mu,\sigma,\vec\Gamma)\}\in\C$. The claim follows, as in $M[G]$, $\Gamma^G=\{y\mid\varphi(y,x,\vec C)\}$.
\end{proof}

We identify sequences of the form $\langle C_i\mid i\in I\rangle$ for classes $\C_i\in\C$ and $I\in\C$ with their \emph{code} $\{\langle c,i\rangle\mid c\in C_i\,\land\,i\in I\}$. In particular, we say that such a sequence is an element of $\C$ if its code is.

\medskip

Note that by \cite[Theorem 1.3]{ClassForcing}, the forcing theorem can fail in class forcing; in fact, even the truth lemma may fail for atomic formulae (see \cite[Theorem {1.5}]{ClassForcing}). There are, however, many known properties of class forcing notions which guarantee that the forcing theorem holds (see for example \cite{MR0345819}, \cite{stanley_thesis}, \cite{MR1780138} and \cite{ClassForcing}). The most prominent such property is that of \emph{pretameness}. The term \emph{pretameness} was first used by Sy Friedman (\cite{MR1780138}), however the concept already appears in unpublished work of Maurice Stanley (\cite{stanley_classforcing}), under the name of \emph{predensity reduction}, and is implicit in the work of Andrzej Zarach (\cite{MR0345819}), in his concept of \emph{C-3-genericity}, and the similar \emph{weak antichain property} appears in \cite{stanley_thesis}.

\begin{definition}
A notion of (class) forcing $\PP$ for $\MM$ is \emph{pretame} for $\MM\models\GB^-$ if for every $p\in\PP$ and for every sequence of dense subclasses $\langle D_i\mid i\in I\rangle\in\C$ of $\PP$ with $I\in M$, there is $q\leq_\PP p$ and $\langle d_i\mid i\in I\rangle\in M$ such that for every $i\in I$, $d_i\subseteq D_i$ and $d_i$ is predense below $q$.
\end{definition}

It was first shown by Stanley in his PhD thesis (\cite{stanley_thesis}) that pretame notions of class forcing satisfy the forcing theorem over any model of $\ZF^-$. A much more well-known account, however over the stronger base theory $\ZF$, was published by Friedman as \cite[Theorem 2.18]{MR1780138}.\footnote{It is in fact easy to observe that $\ZF$ can be replaced by $\ZF^-$ together with the existence of a hierarchy for the arguments of that proof to go through.} For these reasons, and for the benefit of the reader, we will provide a proof of this result in a generalized setting at the beginning of Section \ref{sec:ft}, which we independently discovered before learning about Stanley's results, and which turned out to essentially follow the line of argument in a version of his proof in the unpublished \cite{stanley_classforcing}. Furthermore, we will show that in a certain sense, pretameness can in fact be characterized by the forcing theorem.

If $\MM\models X\subseteq\KM$, we say that a notion of class forcing $\PP$ for $\MM$ \emph{preserves} $X$ if $\MM[G]\models X$ for every $\PP$-generic filter $G$ over $\MM$. Stanley observed (\cite{stanley_thesis},\cite{stanley_classforcing}) that pretameness characterizes the preservation of $\ZF^-$ over models of $\ZF$. We will provide an easy generalization of these arguments to the context of second-order models of set theory in Section \ref{sec:axioms}.

\begin{definition}
We say that a notion of class forcing $\PP$ satisfies the \emph{$\On$-chain condition} (or simply \emph{$\On$-cc}) over $\MM$
if every antichain of $\PP$ which is in $\C$ is already in $M$.  
\end{definition}

Note that if $\MM\models\GBC^-$, then for notions of class forcing for $\MM$, the $\On$-cc is strictly stronger than pretameness. On the other hand, over any model of $\GB^-+\mathsf{AC}$ with a hierarchy that does not satisfy global choice, there is a non-pretame notion of class forcing which satisfies the $\On$-cc (see \cite[Lemma 4.1.7]{thesis}).

\medskip

A property that is closely related to the forcing theorem, and that can be used to characterize pretameness and the $\On$-cc, is the existence of a Boolean completion. We distinguish between two types of Boolean completions, depending on whether suprema exist for all sets or for all classes of conditions. 

\begin{definition}
If $\BB$ is a Boolean algebra, then $\BB$ is
\begin{enumerate-(1)}
  \item \emph{$M$-complete} if the 
supremum $\sup_\BB A$ exists in $\BB$ for every $A\in M$ with $A\subseteq\BB$. 
\item \emph{$\C$-complete} if the supremum $\sup_\BB A$ exists in $\BB$ for every $A\in\C$ with $A\subseteq\BB$. 
\end{enumerate-(1)}
\end{definition}

\begin{definition}\label{def:bc}
We say that \emph{$\PP$ has a Boolean $M$-completion in $\MM$} if there is an $M$-complete Boolean 
$\BB=\langle B,0_\BB,1_\BB\,\neg,\wedge,\vee\rangle$ such that $B$, all Boolean operations of $\BB$ and an injective dense embedding from $\PP$ into $\BB\setminus\{0_\BB\}$ are elements of $\C$. Similarly, we define a \emph{Boolean $\C$-completion} of $\PP$ to be a Boolean $M$-completion $\BB$ of $\PP$ which additionally is $\C$-complete.  
\end{definition}

In set forcing, Boolean completions are unique: If $\BB_0$ and $\BB_1$ are 
both Boolean completions of $\PP$ and $e_0:\PP\ra\BB_0$ and $e_1:\PP\ra\BB_1$ are dense embeddings, then 
one can define an isomorphism from $\BB_0$ to $\BB_1$ by setting $f(b)=\sup\{e_1(p)\mid p\in\PP\wedge e_0(p)\leq b\}$ for $b\in\BB_0$. Moreover,
$f$ fixes $\PP$ in the sense that $f(e_0(p))=e_1(p)$ for every condition $p\in\PP$. 
In class forcing, this proof works only for Boolean $\C$-completions (provided that they exist). It follows from results in \cite[Section 9]{ClassForcing} that Boolean $M$-completions need not be \emph{unique} in the following sense.

\begin{definition}\label{def:unique bc}
 We say that a notion of class forcing \emph{$\PP$ has a unique Boolean $M$-completion in $\MM$}, if $\PP$ has a Boolean $M$-completion 
 $\BB_0$ in $\MM$ and for every other Boolean $M$-completion $\BB_1$ of $\PP$ in $\MM$ 
 there is an isomorphism in $\V$ between $\BB_0$ and $\BB_1$ which fixes $\PP$. The property that \emph{$\PP$ has a unique Boolean $\C$-completion} is defined correspondingly. 
\end{definition}

In Section \ref{sec:bc}, we will investigate the relationship between the existence of Boolean completions on the one hand, and pretameness and the $\On$-cc on the other. 

\medskip

A technique that we will use in many places throughout this paper is adding suprema of proper subclasses of conditions to class forcing notions $\PP$. 
We will make use of the following characterization of suprema: For a class $A\in\C$ with $A\subseteq\PP$, we say that $p\in\PP$ is the \emph{supremum} of $A$ (denoted $p=\sup_\PP A$) if and only if 
\begin{enumerate-(1)}
 \item $\forall a\in A\ a\leq_\PP p$ and
 \item $A$ is predense below $p$. 
\end{enumerate-(1)}
It is easy to check that this corresponds to the usual definition of suprema as least upper bounds.

\medskip 

We now describe a general method of how to extend a notion of class forcing $\PP$ by adding suprema. 
Let $S=\langle X_i\mid i\in I\rangle\in\C$ with $I\in M$ be a sequence of subclasses of $\PP$.
Making use of a suitable bijection in $\C$, we may assume that $P\cap I=\emptyset$. Now let $\PP_S$ be the forcing notion with domain $P_S=P\cup I$ ordered by 
\begin{align*}
 i\leq_{\PP_S} p&\Llr\forall q\in X_i\ q\leq_\PP p,\\
 p\leq_{\PP_S} i&\Llr X_i\text{ is predense below $p$ in }\PP,\\
 i\leq_{\PP_S} j&\Llr\forall q\in X_i\ q\leq_{\PP_S}j.
\end{align*}
For $i\in I$, we will usually write $\sup X_i$ rather than $i$. 
In case that $\sup X_i$ already exists in $\PP$, or that $\sup X_i\leq_{\PP_S}\sup X_j$ and $\sup X_j\leq_{\PP_S}\sup X_i$ for some $i\neq j$, instead of $\PP_S$ we need to consider the quotient of $\PP_S$ by the equivalence relation
$p\sim q$ iff $p\leq_{\PP_S}q$ and $q\leq_{\PP_S}p$ for $p,q\in P\cup I$, in order to obtain a separative partial order. Since $I\in M$ and $\PP$ is separative, all equivalence classes are set-sized, hence this can easily be done, and we will identify $\PP_S$ with this quotient in this case. 
We call $\PP_S$ 
the \emph{forcing notion obtained from $\PP$ by adding $\sup X_i$ for all $i\in I$}. Note that by construction, $\PP$ is dense in $\PP_S$. 

\begin{lemma}\label{lemma:add suprema ft}
Suppose that $\PP$ is a notion of class forcing which satisfies the forcing theorem. 
 If $S\in\C$ is a finite sequence of subclasses of $\PP$, then $\PP_S$ satisfies the forcing theorem. 
\end{lemma}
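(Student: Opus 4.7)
My plan is to exploit the density of $\PP$ in $\PP_S$ to reduce the forcing theorem for $\PP_S$ to that for $\PP$. First, observe that density yields a canonical bijection between generic filters: if $G$ is $\PP_S$-generic, then $H := G \cap \PP$ is $\PP$-generic (every $\PP$-dense class in $\C$ remains dense in $\PP_S$), and $G$ equals the $\PP_S$-upward closure of $H$; conversely, the $\PP_S$-upward closure of any $\PP$-generic filter is $\PP_S$-generic. Since the finitely many new conditions in $\PP_S \setminus \PP$ lie in $M$, the models $\MM[G]$ and $\MM[H]$ coincide.

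Next, translate names. Recursively on name rank, for a $\PP_S$-name $\sigma$, define the $\PP$-name
\[\sigma^* = \{\langle \tau^*, q\rangle \mid q \in \PP \wedge \exists p_S\,(\langle \tau, p_S\rangle \in \sigma \wedge q \leq_{\PP_S} p_S)\}.\]
Since $S \in \C$, the relation $q \leq_{\PP_S} p_S$ is $\C$-definable and the recursion can be carried out in $\C$. An induction using the generic correspondence above shows $\sigma^G = (\sigma^*)^H$ for every corresponding pair $(G,H)$. Note that $\sigma^*$ is in general only a class $\PP$-name, even when $\sigma \in M^{\PP_S}$; its evaluation is however the set $\sigma^G$. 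The same construction translates class-name parameters $\vec\Gamma$ into class $\PP$-names $\vec\Gamma^*$.

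The forcing theorem for $\PP_S$ then rests on the equivalence
\[p_S \Vdash_{\PP_S}^\MM \varphi(\vec\sigma, \vec\Gamma) \iff \forall q \in \PP\,(q \leq_{\PP_S} p_S \to q \Vdash_\PP^\MM \varphi(\vec\sigma^*, \vec\Gamma^*)),\]
with each $\sigma_i^*$ read on the right as a class-name parameter. The $(\Leftarrow)$-direction: a $\PP_S$-generic $G \ni p_S$ meets some $q \in \PP$ with $q \leq_{\PP_S} p_S$ by density and genericity, whence $\MM[G] = \MM[H] \models \varphi$. The $(\Rightarrow)$-direction: any $\PP$-generic $H \ni q$ lifts via upward closure to a $\PP_S$-generic containing $p_S$. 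Combined with the definability lemma for $\PP$ and the $\C$-definability of $\sigma \mapsto \sigma^*$, this equivalence gives the definability lemma for $\PP_S$. For the truth lemma, if $\MM[G] \models \varphi(\vec\sigma^G, \vec\Gamma^G)$ then $\MM[H] \models \varphi(\vec\sigma^{*,H}, \vec\Gamma^{*,H})$, so the truth lemma for $\PP$ produces $q \in H \subseteq G$ with $q \Vdash_\PP^\MM \varphi(\vec\sigma^*, \vec\Gamma^*)$. Since the $\PP$- and $\PP_S$-orderings agree on $\PP$, the equivalence then yields $q \Vdash_{\PP_S}^\MM \varphi(\vec\sigma, \vec\Gamma)$. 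The main technical obstacle is the careful verification of the name translation and this equivalence — establishing $\C$-definability of $\sigma \mapsto \sigma^*$, the identity $\sigma^G = (\sigma^*)^H$, and the harmless substitution of class-name slots for set-name slots in $\varphi$ inside $\MM[G] = \MM[H]$. Finiteness of $S$ plays no essential role here beyond keeping the bookkeeping transparent.
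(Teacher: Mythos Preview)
Your recursive translation $\sigma \mapsto \sigma^*$ is ill-formed. At name rank $\geq 2$ the definition produces pairs $\langle \tau^*, q\rangle$ where $\tau^*$ may itself be a proper class (take $\tau = \{\langle\emptyset, \sup X_0\rangle\}$: then $\tau^*$ has class-many elements). Such a pair is not a set, so $\sigma^*$ is not a class at all, let alone an element of $\C^\PP$ --- whose members are by definition classes of pairs $\langle\mu,p\rangle$ with $\mu \in M^\PP$ a \emph{set} name. Your remark that ``$\sigma^*$ is in general only a class $\PP$-name'' does not address this: the object simply fails to exist in the $\GB^-$ framework. The defect is also flagged by your closing claim that finiteness of $S$ plays no essential role; the paper notes immediately after the lemma that adding infinitely many suprema can destroy the forcing theorem (via Theorem~\ref{thm:failure ft}), so any valid argument must exploit finiteness somewhere.

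The paper's own proof simply defers to \cite[Lemma~9.3]{ClassForcing}. The workable idea (visible also in the proof of the final lemma of Section~\ref{sec:niceness}) is a translation that stays inside $M^\PP$: for each of the finitely many $\epsilon\colon I \to 2$, recursively replace every occurrence of $\sup X_i$ in $\sigma$ by $\one_\PP$ if $\epsilon(i)=1$ and delete the pair if $\epsilon(i)=0$, obtaining $\sigma^\epsilon \in M^\PP$. Then $\sigma^G = (\sigma^{\epsilon_G})^{G\cap\PP}$ where $\epsilon_G(i)=1$ iff $\sup X_i \in G$. Since the conditions $q\in\PP$ simultaneously deciding all $\epsilon_G(i)$ are dense --- this is where finiteness of $I$ enters --- the $\PP_S$-forcing relation becomes a finite Boolean combination of $\PP$-forcing statements with set-name arguments $\vec\sigma^\epsilon$ and finitely many fixed class-name parameters $\vec\Gamma^\epsilon$.
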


\begin{proof}
 This is an easy generalization of \cite[Lemma 9.3]{ClassForcing}.
\end{proof}

It will follow from the proof of Theorem \ref{thm:failure ft} that this may fail for infinite sequences of subclasses of $\PP$.

\begin{example}
 We will frequently use the following notion of class forcing to motivate our results. Given $\MM=\langle M,\C\rangle\models\GB^-$, let $\Col(\omega,\On)^M$ denote the notion of class forcing with conditions of the form $p\colon \dom{p}\to\On^M$ for $\dom p\subseteq\omega$ finite, ordered by reverse inclusion. It follows from  \cite[Lemma 2.2, Lemma 6.3 and Theorem 6.4]{ClassForcing} that $\Col(\omega,\On)^M$ satisfies the forcing theorem. 
However, $\Col(\omega,\On)^M$ is non-pretame as witnessed by the sequence $\langle D_n\mid n\in\omega\rangle$, where $D_n$ is the dense class of all conditions $p$ such that $n\in\dom{p}$. Another way to see this is to observe that any $\Col(\omega,\On)^M$-generic filter gives rise to a cofinal sequence from $\omega$ to $\On^M$, and thus induces a failure of Replacement in the generic extension (see Section \ref{sec:axioms}). 
\end{example}

\subsection*{Results}
\begin{notation}
 Let $\MM\models\GB^-$ and let $\Psi$ be some property of a notion of class forcing $\PP$ for $\MM=\langle M,\C\rangle$. We say that $\PP$ \emph{densely} satisfies $\Psi$ if every notion of class forcing $\QQ$ for $\MM$, for which there
 is a dense embedding in $\C$ from $\PP$ into $\QQ$, satisfies the property $\Psi$.
\end{notation}

For the sake of simplicity, if there is a dense embedding in $\C$ from $\PP$ into $\QQ$, 
we will frequently assume that $\PP$ is a suborder of $\QQ$. This does not constitute a restriction, since in the above situation, $\PP$ is always isomorphic to a (dense) suborder of $\QQ$.

The following two theorems summarize the results of the present paper. Regarding Theorem \ref{pretamenessth} below, in a slightly less general context, that pretameness implies both (1) and (2) and that (1) implies pretameness is due to Maurice Stanley (\cite{stanley_thesis},\cite{stanley_classforcing}), and the equivalence of (4) and (5) is an immediate consequence of \cite[Theorem 5.5]{ClassForcing}.

\begin{theorem}\label{pretamenessth}Let $\MM=\langle M,\C\rangle$ be a countable transitive model of $\GBC^-$, and let $\PP$ be a notion of class forcing for $\MM$.
The following properties (over $\MM$) are equivalent to the pretameness of $\PP$ over $\MM$, where we additionally require the non-existence of a first-order truth predicate for (4) and (5), and for (7) we assume that $\MM\models\KM$, which is notably incompatible to the assumptions used for (4) and (5). 
\begin{enumerate-(1)} 
 \item $\PP$ preserves $\GB^-$/Collection/Replacement.
 \item $\PP$ satisfies the forcing theorem and preserves Separation.
 \item $\PP$ satisfies the forcing theorem and does not add a cofinal/surjective function from some $\gamma\in\On^M$ to $\On^M$. 
  \item $\PP$ densely satisfies the forcing theorem. 
 \item $\PP$ densely has a Boolean $M$-completion.
 \item $\PP$ satisfies the forcing theorem and produces the same generic extensions as $\QQ$ for every forcing notion $\QQ$ such that $\C$ contains a dense embedding from $\PP$ into $\QQ$.\footnote{More precisely, if $\pi:\PP\ra\QQ$ in $\C$ is a dense embedding and $H$ is $\QQ$-generic over $\MM$, then $M[\pi^{-1}[H]]=M[H]$.}
 \item $\PP$ densely has the property that every set of ordinals in any of its generic extensions has a nice name.
\end{enumerate-(1)}
\end{theorem}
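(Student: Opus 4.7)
The plan is to prove pretameness is equivalent to each of (1)--(7) by separating the forward directions (pretameness implies the condition) from the backward directions. The forward directions follow from the three main technical sections: the forcing theorem for pretame forcings (Section \ref{sec:ft}), preservation of Collection and Replacement (Section \ref{sec:axioms}), and the construction of a Boolean $M$-completion (Section \ref{sec:bc}), combined with the routine observation that pretameness is preserved under dense embeddings $\PP\hookrightarrow\QQ$ in $\C$. Together these give (1), the forcing-theorem clauses of (2) and (3), and the ``densely'' versions (4), (5) and (7); item (6) follows from the forcing theorem in the larger $\QQ$, while the preservation of Separation in (2) and the no-cofinal-function clause in (3) are consequences of the preservation of Replacement.

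For the converses of (1), (2) and (3), I would use a single uniform construction. Suppose pretameness fails, witnessed by $\langle D_i\mid i\in I\rangle\in\C$ with $I\in M$ and $p\in\PP$ admitting no set-sized refinement below any $q\leq_\PP p$. Using the set-like well-order $\prec$ of $M$ provided by $\GBC^-$, let $G$ be $\PP$-generic with $p\in G$ and define $f(i)=\rank(q_i)$, where $q_i$ is the $\prec$-least element of $G\cap D_i$. If $f[I]$ were bounded by some $\alpha^{*}\in\On^M$, the sets $D_i\cap C_{\alpha^{*}+1}$ would give, via density and the forcing theorem, a set-sized refinement of the $D_i$ below a common condition of $G$, contradicting the hypothesis. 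Hence $f$ is unbounded in $\On^M$. After re-indexing along a well-ordering of $I$, this $f$ refutes Replacement in $M[G]$, yielding $\neg(1)\Rightarrow\neg$pretameness, and exhibits a cofinal function from some $\gamma\in\On^M$ into $\On^M$, yielding $\neg(3)\Rightarrow\neg$pretameness. For (2), a refinement using the definability of $f$ via the forcing theorem produces a Separation instance in $M[G]$ with class parameter from $\C[G]$ that escapes $M[G]$, by exploiting the unboundedness of $f$ against any ambient set-sized bound.

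For the converses of (4)--(7), $\PP$ itself may satisfy the target property, as in the example $\Col(\omega,\On)^M$, so I would pass to a dense extension $\QQ\supseteq\PP$ in $\C$ obtained by the add-suprema mechanism from the paragraph preceding Lemma \ref{lemma:add suprema ft}, applied to suitable subclasses of $\PP$ selected from the pretameness-failure witness. In $\QQ$ the failure becomes visible: the forcing theorem fails on atomic formulae involving the new supremum names (refuting (4)); any Boolean $M$-completion of $\PP$ would supply that very forcing theorem (refuting (5) via (4)); a $\QQ$-generic $H$ selects representatives of each $D_i$ that are not recoverable from $\pi^{-1}[H]$, so $M[H]$ strictly enlarges $M[\pi^{-1}[H]]$ (refuting (6)); and the set of ordinals coding those representatives admits no nice $\QQ$-name (refuting (7), under $\KM$).

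The principal obstacle is ensuring that the extension $\QQ$ of the previous paragraph truly refutes the relevant property rather than satisfying it for spurious reasons. The excluded hypotheses for (4) and (5) (no first-order truth predicate in $\MM$) and for (7) ($\MM\models\KM$) enter precisely here: a first-order truth predicate in $\MM$ would define the forcing relation for arbitrary atomic formulae over $\QQ$, artificially providing the forcing theorem of (4) and, by standard quotient constructions, a Boolean $M$-completion as in (5); and $\KM$-comprehension is needed to form the classes of antichains used to formalise nice names. The most delicate technical point is verifying that the new supremum conditions in $\QQ$ create genuinely undecidable atomic statements, which is where the pretameness failure is leveraged most critically.
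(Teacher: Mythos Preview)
Your forward directions are mostly fine, but the one for (6) is not: the forcing theorem for $\QQ$ alone does not give $M[H]=M[\pi^{-1}[H]]$. The paper's argument (Theorem~\ref{thm:EMP}) applies pretameness of $\PP$ directly to the dense classes $D_q=\{p\in\PP\mid p\le_\QQ q\ \vee\ p\perp_\QQ q\}$ for $q\in\tc(\sigma)$ to rewrite an arbitrary $\QQ$-name $\sigma$ as a $\PP$-name below some condition in $G$. You need that construction, not just the forcing theorem.

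The serious gaps are in your converses for (4), (5) and (7), where you have the mechanism wrong in each case.

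For (4), the paper does not show that the added suprema produce ``genuinely undecidable atomic statements.'' Rather, starting from a cofinal (and then, via Lemma~\ref{lem:surjection onto Ord from cofinal function}, surjective) $\dot F\colon\kappa\to M$, one adds suprema $p_{\alpha,\beta}$ for $\{q\le p\mid q\Vdash_\PP\dot F(\check\alpha)\in\dot F(\check\beta)\}$, and then shows that \emph{if} the resulting $\QQ$ satisfied the forcing theorem, the forcing relation for the infinitary quantifier-free language would let one \emph{define a first-order truth predicate for $M$}. Your explanation of the role of the no-truth-predicate hypothesis is inverted: it is not that a truth predicate would ``artificially provide'' the forcing theorem for $\QQ$; it is that the forcing theorem for $\QQ$ would produce a truth predicate, contradiction. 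Without this encoding-of-truth idea you have no argument.

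For (7), your stated reason for assuming $\KM$ is incorrect. $\KM$ is not needed ``to form the classes of antichains used to formalise nice names''; it is needed because the construction adds \emph{infinitely many} suprema to $\PP$, and Lemma~\ref{lemma:add suprema ft} only guarantees the forcing theorem survives adding finitely many. Under $\KM$ every class forcing satisfies the forcing theorem, so $\QQ$ does. The actual construction (Theorem~\ref{thm:pretame nice}) is also far more delicate than ``the set of ordinals coding those representatives admits no nice $\QQ$-name'': one builds two auxiliary classes $D,E\subseteq\On^M$ by a diagonal recursion against all relevant $X_{p,\alpha,\beta}$, adds suprema for $\{p\mid p\Vdash\dot F(\check\alpha)\in\check D\}$ and $\{p\mid p\Vdash\dot F(\check\beta)\in\check E\}$, and shows that the \emph{intersection} $\sigma^G\cap\tau^G$ has no nice $\QQ$-name by a case analysis over the possible conditions in a putative nice name.

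Your converse for (2) is also only a gesture; the paper's Lemma~\ref{sepreplemma} again uses a diagonal construction of a class $D$ against the classes $A_{p,\alpha}$ and $B_{p,\alpha,\tau}$ to defeat every candidate set name $\tau$. Similarly, the converse of (6) in the paper routes through the failure of Separation (not through the $D_i$ directly): one takes $\Gamma\in\C^\PP$ with $\Gamma^G\subseteq\sigma^G$ having no set name, adds $\sup A_\mu$ for $A_\mu=\{q\mid q\Vdash\mu\in\Gamma\}$, and exhibits $\Gamma^G\in M[H]\setminus M[G]$.
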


Given notions of class forcing $\PP$ and $\QQ$ for $\MM=\langle M,\C\rangle\models\GB^-$, given $\pi\in\C$ such that $\pi$ is a dense embedding from $\PP$ to $\QQ$, and given a $\PP$-name $\sigma$, we recursively define $\pi(\sigma)=\{\langle\pi(\tau),\pi(p)\rangle\mid\langle\tau,p\rangle\in\sigma\}$. 

\begin{theorem}\label{ordccth}
Let $\MM=\langle M,\C\rangle$ be a countable transitive model of $\GB^-$ such that $\MM\models\GBC^-$, and let $\PP$ be a notion of class forcing that satisfies the forcing theorem. The following properties (over $\MM$) are equivalent:
\begin{enumerate-(1)}
 \item $\PP$ satisfies the $\On$-cc. 
 \item $\PP$ satisfies the maximality principle.\footnote{See Definition \ref{def:mp}. }
 \item $\PP$ densely has a unique Boolean $M$-completion.
 \item $\PP$ has a Boolean $\C$-completion.
 \item If there are $\QQ,\pi\in\C$ such that $\pi$ is a dense embedding from $\PP$ to $\QQ$ and $\sigma\in M^\QQ$, then there is $\tau\in M^\PP$ with $\one_\QQ\Vdash_\QQ\sigma=\pi(\tau)$. 
 \item $\PP$ densely has the property that whenever $\one_\PP\Vdash_\PP\sigma\subseteq\check\alpha$ for some $\sigma\in M^\PP$ and $\alpha\in\On^M$, then there is a nice $\PP$-name $\tau$ such that $\one_\PP\Vdash_\PP\sigma=\tau$.  
\end{enumerate-(1)}
\end{theorem}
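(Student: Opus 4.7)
My plan is to establish the equivalences via the cycle $(1) \Rightarrow (4) \Rightarrow (3) \Rightarrow (5) \Rightarrow (6) \Rightarrow (1)$, together with the separate equivalence $(1) \Leftrightarrow (2)$. The hypothesis that $\PP$ satisfies the forcing theorem, together with the hierarchy and global wellorder provided by $\GBC^-$, will be used throughout.

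For $(1) \Rightarrow (4)$, I would construct a Boolean $\C$-completion $\BB$ of $\PP$ via an adaptation of the regular-open algebra: under the $\On$-cc, every class $X \in \C$ with $X \subseteq \PP$ admits a set-sized maximal antichain $A \subseteq X$ whose supremum necessarily coincides with that of $X$, so class-sized suprema reduce to set-sized ones and are well-defined in $\BB$. Then $(4) \Rightarrow (3)$ because any $\C$-completion is an $M$-completion and is inherited by dense extensions, while uniqueness in any dense extension follows from the standard sup-formula construction of an isomorphism, which works here because one of the completions involved is $\C$-complete. For $(3) \Rightarrow (5)$, given the unique Boolean $M$-completion $\BB$ shared by $\PP$ and $\QQ$, a $\QQ$-name $\sigma$ is recursively transferred to a $\PP$-name $\tau$ via Boolean values in $\BB$, yielding $\one_\QQ \Vdash_\QQ \sigma = \pi(\tau)$; then $(5) \Rightarrow (6)$ by pulling a $\QQ$-name for a subset of $\alpha$ back to a $\PP$-name and converting into nice form, relying on the inheritance of antichain-level set-sizedness from (5). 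The equivalence $(1) \Leftrightarrow (2)$ follows from the standard mixing-of-names argument together with the observation that MP would let one encode a proper class antichain as the collapse of a single set-sized name, contradicting $\tau \in M$ for a proper class.

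The main obstacle I expect is $(6) \Rightarrow (1)$. My approach is contrapositive: given a proper class antichain $A \in \C$ of $\PP$, form the dense extension $\QQ = \PP_{\langle A\rangle}$ adjoining $\sup A$, and consider the $\QQ$-name $\sigma = \{\langle \check 0,\sup A\rangle\}$ for a subset of $1$. By $(6)$ applied to $\QQ$, there is a nice $\QQ$-name $\tau = \{\langle \check 0, q\rangle \mid q \in A_0\}$ with $A_0 \in M$ a set antichain and $\one_\QQ \Vdash_\QQ \sigma = \tau$. Since $\sup A \in G$ if and only if $G \cap A \neq \emptyset$ for any $\QQ$-generic $G$, the antichains $A$ and $A_0$ are met by exactly the same generics. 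The heart of the argument is to exploit this predensity-equivalence, together with global choice, to conclude that $A$ itself must be a set, possibly via an iterated reduction that uses $A_0$ to partition the class-sized $A$ into set-many compatibility classes, each of which can then be shown to be set-sized via a further application of $(6)$ below each element of $A_0$.
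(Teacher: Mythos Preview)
Your cycle is broadly sensible, and most of the forward implications are routine; the paper itself does not argue via a single cycle but instead proves each of $(2)$--$(6)$ equivalent to $(1)$ separately (the maximality-principle lemma in Section~\ref{sec:ft}, the Boolean-completion theorem together with Lemma~\ref{lemma:unique bc} in Section~\ref{sec:bc}, Lemma~\ref{lemma:SEMP} in Section~\ref{sec:EMP}, and Theorem~\ref{thm:ord-cc very nice} in Section~\ref{sec:niceness}). Your step $(5)\Rightarrow(6)$ is also not self-contained as written: the SEMP lets you pull a $\QQ$-name back to a $\PP$-name, but gives no mechanism for converting that $\PP$-name into a \emph{nice} one without already knowing something like the $\On$-cc; the paper avoids this by routing through $(1)$.

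The real gap is $(6)\Rightarrow(1)$. Your name $\sigma=\{\langle\check 0,\sup A\rangle\}$ is \emph{already} a nice $\QQ$-name: the singleton $\{\sup A\}$ is a set-sized antichain of $\QQ$, so $(6)$ is satisfied trivially with $\tau=\sigma$ and $A_0=\{\sup A\}$, and nothing whatsoever can be concluded about $A$. Your proposed ``iterated reduction'' then collapses, since the single compatibility class below $\sup A$ is all of $A$. This is exactly why the paper's argument (Theorem~\ref{thm:ord-cc very nice}) is considerably more intricate: one first passes to the Boolean $M$-completion of $\PP$, then uses Lemma~\ref{lem:no sup} to extract three pairwise disjoint subclasses $A,D,E$ of a proper-class antichain, each with \emph{no supremum in $\PP$}; setting $B=A\cup D$ and $C=A\cup E$, one shows at least one of $\sup B,\sup C$ fails to exist in $\PP$, adjoins both to form $\QQ$, and considers the \emph{nested} name $\sigma=\{\langle\{\langle\check 0,\sup B\rangle\},\sup C\rangle\}$. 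The nesting forces any nice witness $\tau=\{\check 0\}\times A_0\cup\{\check 1\}\times A_1$ to have $A_1\subseteq\PP$ (neither adjoined supremum can lie in $A_1$), whence $\sup A_1$ exists in the $M$-complete $\PP$ and is shown to equal $\sup A$, contradicting the choice of $A$. Adjoining a single supremum cannot produce this effect.
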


Whether pretameness implies Statement (6) of Theorem \ref{pretamenessth} was a question (that turned out to have an easy positive answer) posed to us by Victoria Gitman at the European Set Theory Workshop in Cambridge in the summer of 2015, and was one of the starting points of the research that we present in this paper.

\section{The forcing theorem}\label{sec:ft}
In this section, we characterize pretameness in terms of the forcing theorem. 
Our first goal will be to verify that pretame notions of class forcing satisfy the forcing theorem. Let $\MM=\langle M,\C\rangle$ be a fixed countable transitive model of $\GB^-$.

\begin{definition}Let $\PP$ be a notion of class forcing for $\MM$ and let $p,q\in\PP$. 
\begin{enumerate-(1)}
 \item If $p$ and $q$ are compatible in $\PP$, we say that \emph{$D\subseteq\PP$ is dense below $p\land q$} if $\forall r\leq_\PP p,q\,\exists s\leq_\PP r\,( s\in D)$.\footnote{We do not assume that $\PP$ is closed under meets; $p\land q$ is merely a symbol here and in the following, and is not supposed to denote the actual meet of $p$ and $q$ in $\PP$, which may not exist.}
 \item We say that \emph{$d\subseteq\PP$ is predense below $p\land q$} if $\forall r\leq_\PP p,q\,\exists s\in d\,( r\parallel_\PP s)$. 
\end{enumerate-(1)}
  Moreover, if $p\perp_\PP q$, we make the additional convention that only the empty set is predense below $p\land q$, while any set is dense below $p\land q$.\footnote{While this may at first seem like a somewhat weird convention, it will save us a lot of notational cumbersomeness in the arguments to follow.}
\end{definition}

We will make use of the following equivalent formulation of pretameness:

\begin{remark}\label{rem:pretamewedge}
A notion of class forcing $\PP$ is pretame for $\MM$ if and only if for every $p\in\PP$ and for all sequences
$\langle D_i\mid i\in I\rangle\in\C$ and $\langle s_i\mid i\in I\rangle\in M$ such that each $D_i$ is dense below $p\wedge s_i$, there are $q\leq_\PP p$ and $\langle d_i\mid i\in I\rangle\in M$ such that each $d_i\subseteq D_i$ is predense below $q\wedge s_i$. 
\end{remark}

Let us fix some notion of class forcing $\PP$ for $\MM$. By \cite[Theorem 4.3]{ClassForcing}, in order to verify the forcing theorem for $\PP$, it suffices to check that the forcing relation for some atomic $\mathcal L_\in$-formula is definable over $\MM$. Let $\sigma$ and $\tau$ be $\PP$-names in $\MM$, and let $p\in\PP$ be a condition. In set forcing, one may define the syntactic forcing relation $\forces^*_\PP$ for atomic formulae recursively, by stipulating the following (essentially as in \cite[Chapter VII, Definition 3.3]{MR597342}).

\medskip

\begin{enumerate}
    \item[(A)] $p\forces^*_\PP\sigma\subseteq\tau$ if and only if the following property (a) holds: for all $\langle\rho,s\rangle\in\sigma$,
   \[\{q\mid q\forces^*_\PP\rho\in\tau\}\textrm{ is dense below }p\land s\textrm{ in }\PP.\]
   \item [(=)] $p\forces^*_\PP\sigma=\tau$ if and only if $p\forces^*_\PP\sigma\subseteq\tau$ and $p\forces^*_\PP\tau\subseteq\sigma$.
  \item[(B)] $p\forces^*_\PP\sigma\in\tau$ if and only if the following property (b) holds:
   \[\{q\mid \exists\langle\rho,s\rangle\in\tau\ [q\leq_\PP s\,\land\,q\forces^*_\PP\sigma=\rho]\}\textrm{ is dense below }p\textrm{ in }\PP.\]
\end{enumerate}

To verify the definability of the semantic forcing relation $\forces_\PP\,=\,\forces_\PP^\MM$ for atomic formulae in the classical case of set forcing, one then proceeds to show that the relation $\forces^*_\PP$, which in the case of set forcing is easily seen to be a definable relation, concides with $\forces_\PP$. Such string of arguments may be found in \cite[Chapter VII, \S 3]{MR597342}, leading up to its \cite[Chapter VII, Theorem 3.6]{MR597342}. Provided that we have a class relation $\forces^*_\PP$ that satisfies Properties (A), (=) and (B) w.r.t.\ a notion of class forcing $\PP$, it is easy to observe that exactly the same arguments yield that $\forces^*_\PP\,=\,\forces_\PP$ for atomic formulae. That is, classical arguments yield the following.

\begin{lemma}
  Assume that $\PP$ is a notion of class forcing for a countable transitive model $\MM=\langle M,\C\rangle\models\GB^-$, and assume that $\forces^*_\PP\;\subseteq\PP\times M^\PP\times\{\subseteq,\in\}\times M^\PP$ is a relation in $\C$. If $\forces^*_\PP$ satisfies Properties (A), (=) and (B), then $\forces^*_\PP$ coincides with the semantic forcing relation $\forces_\PP$ for atomic formulae in the forcing language of $\PP$ over $\MM$.\hfill\qedsymbol
\end{lemma}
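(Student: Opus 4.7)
The plan is to prove by a simultaneous induction on triples $(\sigma,\tau,\theta)$, with $\theta\in\{\subseteq,=,\in\}$ recording the atomic formula type, two statements about every atomic formula $\varphi\equiv\varphi(\sigma,\tau)$ in the forcing language: \emph{soundness}, i.e.\ $p\Vdash^*_\PP\varphi$ and $G\ni p$ generic imply $\varphi^G$ holds in $\MM[G]$; and the \emph{truth lemma}, i.e.\ $G$ generic with $\varphi^G$ holding in $\MM[G]$ implies some $q\in G$ satisfies $q\Vdash^*_\PP\varphi$. Countability of $\MM$ supplies the generic filters used along the way. Once both are established, the equivalence $\Vdash^*_\PP\,=\,\Vdash_\PP$ follows: soundness gives $\Rightarrow$, and for $\Leftarrow$, given $p\Vdash_\PP\varphi$, the truth lemma ensures that the class $A=\{q\mid q\Vdash^*_\PP\varphi\}\in\C$ meets every generic filter containing $p$, whence $A$ is dense below $p$ and the density property below then yields $p\Vdash^*_\PP\varphi$.

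As preparation I would record two elementary consequences of (A), (=), (B) for $\Vdash^*_\PP$. \emph{Downward persistence}: $p\Vdash^*_\PP\varphi$ and $q\leq_\PP p$ imply $q\Vdash^*_\PP\varphi$, since each clause is formulated as a density condition below $p$ (or below $p\wedge s$), and any such condition transfers to stronger conditions. \emph{Density property}: if $\{q\leq_\PP p\mid q\Vdash^*_\PP\varphi\}$ is dense below $p$, then $p\Vdash^*_\PP\varphi$, obtained by unpacking the clauses. Together these yield the \emph{dichotomy}: for every $p$, either $p\Vdash^*_\PP\varphi$, or some $r\leq_\PP p$ \emph{rejects} $\varphi$, meaning no $q\leq_\PP r$ has $q\Vdash^*_\PP\varphi$. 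Since $\Vdash^*_\PP\in\C$, the class of conditions forcing or rejecting $\varphi$ is then itself in $\C$ and dense in $\PP$.

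I would order the triples lexicographically by the multiset $\{\rank\sigma,\rank\tau\}$ (in its usual well-ordering on finite multisets of ordinals) and then by the complexity ordering $c(\in)<c(\subseteq)<c(=)$. Under this order, (A) reduces $\sigma\subseteq\tau$ to $\rho\in\tau$ with $\rank\rho<\rank\sigma$, (B) reduces $\sigma\in\tau$ to $\sigma=\mu$ with $\rank\mu<\rank\tau$, and (=) reduces $\sigma=\tau$ to $\sigma\subseteq\tau$ and $\tau\subseteq\sigma$ with strictly smaller $\theta$. For the soundness step, I would unpack the appropriate clause and use genericity to find $q\in G$ syntactically forcing a strictly smaller atomic formula; the inductive soundness then delivers the semantic statement in $\MM[G]$. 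For example, if $p\Vdash^*_\PP\sigma\in\tau$ and $G\ni p$ is generic, clause (B) produces $q\in G$ and $\langle\rho,s\rangle\in\tau$ with $s\in G$, $q\leq_\PP s$ and $q\Vdash^*_\PP\sigma=\rho$, so by the inductive soundness $\sigma^G=\rho^G\in\tau^G$. For the truth-lemma step, I would meet $G$ with the dense class from the dichotomy: if $q\in G$ forces $\varphi$, we are done; if $r\in G$ rejects $\varphi$, I would unpack the failure of the relevant clause uniformly across conditions below $r$ to produce a class in $\C$, dense below $r$, of conditions rejecting an atomic sub-formula of strictly smaller rank. Meeting this class with $G$ and invoking the inductive truth lemma on the sub-formula, together with downward persistence to combine witnesses into a single $q\in G$, then contradicts the semantic assumption $\varphi^G$.

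The main obstacle is arranging this simultaneous induction correctly, since the three atomic forms feed into one another through (A), (=), (B) rather than reducing only to smaller names; the multiset-plus-complexity well-founded order above resolves this cleanly. Once it is in place, the individual inductive steps are direct class-forcing transcriptions of the classical argument in \cite[Chapter VII, \S 3]{MR597342}, with the convention that only the empty set is predense below $p\wedge q$ for incompatible $p,q$ silently disposing of compatibility subcases.
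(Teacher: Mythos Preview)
Your proposal is correct and follows essentially the same approach as the paper, which simply cites the classical argument from Kunen's \emph{Set Theory} (Chapter VII, \S3) and observes that it transfers verbatim once a class relation $\Vdash^*_\PP$ satisfying (A), (=), and (B) is given. Your sketch of soundness and the truth lemma by simultaneous induction on a well-founded ranking of atomic formulae, together with the downward-persistence/density-property/dichotomy preliminaries, is precisely that classical argument spelled out in the class-forcing setting.
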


We are now ready to provide a proof of the following theorem.

\begin{theorem}[Stanley]\label{thm:pretame ft}
  Let $\MM$ be a countable transitive model of $\GB^-$, and let $\PP$ be a notion of class forcing for $\MM$. If $\PP$ is pretame over $\MM$, then $\PP$ satisfies the forcing theorem over $\MM$.
\end{theorem}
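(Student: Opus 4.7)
The plan is to exploit the preceding lemma: it suffices to exhibit a class relation $\forces^*_\PP\in\C$ on $\PP\times M^\PP\times\{\subseteq,\in\}\times M^\PP$ satisfying clauses (A), (=), (B), and then to verify the truth lemma for atomic formulas. By \cite[Theorem 4.3]{ClassForcing} this already delivers the forcing theorem for all first-order formulas over $\MM$.

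To construct $\forces^*_\PP$, I follow the usual recursion on name ranks. Clauses (A) and (B) reduce questions about a pair $(\sigma,\tau)$ to questions about pairs of strictly smaller combined name rank, while (=) merely swaps $\subseteq$ for $=$ at the same ranks, so the recursion is well-founded. For each $\alpha\in\On^M$, let $N_\alpha=\{\sigma\in M^\PP\mid \rank\sigma<\alpha\}$, and let $F_\alpha$ denote the partial relation on $\PP\times N_\alpha\times\{\subseteq,\in\}\times N_\alpha$ produced by the clauses. At a successor step, the clauses express $F_{\beta+1}$ as a first-order condition with class parameter $F_\beta$: the quantifier over $\langle\rho,s\rangle\in\sigma$ is bounded because $\sigma\in M$ is a set, and the density condition ``$\{q\mid q\forces^*_\PP\rho\in\tau\}$ is dense below $p\wedge s$'' is first-order over $\MM$ once $F_\beta$ is available as a class parameter.

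The main obstacle is to formalize this as an $\On^M$-length class recursion within $\GB^-$, which would guarantee that the full sequence $\langle F_\alpha\mid\alpha\in\On^M\rangle$, and hence $\forces^*_\PP=\bigcup_\alpha F_\alpha$, belongs to $\C$. This is exactly where pretameness is indispensable, since without it even atomic definability can fail (cf.\ \cite[Theorem 1.5]{ClassForcing}). Using the equivalent formulation in Remark \ref{rem:pretamewedge}, any class-indexed density statement over $\PP$ can be converted into a set-indexed predensity statement below some $q\leq p$. I would apply this repeatedly to show that at each stage $\alpha$, the forcing information needed to compute $F_\alpha$ below a suitable refinement of any given condition is captured by set-sized data in $M$, which suffices to produce $\langle F_\alpha\rangle_\alpha$ uniformly as a class in $\C$. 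Once $\forces^*_\PP\in\C$ is in hand, the truth lemma for atomic formulas is a routine induction on name ranks: for each $\PP$-generic $G$ over $\MM$ and each atomic statement holding in $\MM[G]$, the class of conditions deciding that statement is dense in $\PP$, and genericity then produces a forcing witness in $G$.
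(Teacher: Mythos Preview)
Your proposal has a genuine gap at precisely the point you flag as ``the main obstacle.'' In $\GB^-$ one has only first-order class comprehension; carrying out an $\On^M$-length recursion whose stages $F_\alpha$ are proper classes requires a principle like elementary transfinite recursion, which $\GB^-$ does not provide. You acknowledge this, but your remedy---``apply pretameness repeatedly to show that at each stage $\alpha$ the forcing information below a suitable refinement is captured by set-sized data''---is not a proof. Pretameness lets you, \emph{given} a set-indexed family of dense classes, pass to predense sets below some $q$; it does not by itself tell you how to aggregate such local set-sized data over all $p$, all names $\sigma,\tau$, and all ordinals $\alpha$ into a single class $\forces^*_\PP\in\C$. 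As written, the argument never escapes the need for a class recursion.

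The paper's proof avoids recursion entirely, and this is the key idea you are missing. One defines a \emph{witnessed} forcing relation $p\Vdash^{**}_\PP\sigma\,R\,\tau$ to hold when there exists a set $Y\in M$ of atomic forcing statements together with a set $\vec D\in M$ of predense witnesses, closed under the clauses (A) and (B) in the appropriate sense and containing $\langle p,\sigma,R,\tau\rangle$. Because this is an existential statement about \emph{sets} in $M$, the relation $\Vdash^{**}_\PP$ is directly a class in $\C$ by first-order comprehension---no recursion needed. One then sets $p\Vdash^*_\PP\sigma\,R\,\tau$ iff $\{q\leq_\PP p\mid q\Vdash^{**}_\PP\sigma\,R\,\tau\}$ is dense below $p$, and verifies clauses (A), (=), (B) for $\Vdash^*_\PP$. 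Pretameness enters exactly once, and not where you placed it: in the backward direction of (A), one has for each $\langle\rho,s\rangle\in\sigma$ a dense class below $p\wedge s$ of conditions with set-sized witnesses; pretameness (in the form of Remark~\ref{rem:pretamewedge}) collapses these to predense sets below a single $r\leq_\PP q$, and Collection then assembles the witnesses into one $(Y,\vec D)$ certifying $r\Vdash^{**}_\PP\sigma\subseteq\tau$. Your intuition that ``set-sized data'' is the answer is correct, but the device that makes it work is this explicit certificate relation, not an attempt to rescue the naive rank recursion.
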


\begin{proof}
We will show that we can define a class $\forces^*_\PP$ within $\MM$ that satisfies properties (A), (=) and (B). We will not bother with defining $\forces^*_\PP$ w.r.t.\ equality, considering its instances as mere abbreviations for two statements concerning the $\subseteq$-relation.

However, we will need to first define yet another auxiliary forcing relation, that collects forcing statements for which we have (set-sized) witnesses in $M$; we may call this the \emph{witnessed forcing} relation (for atomic formulae) of $\MM$. For $R\in\{\subseteq,\in\}$, we define that $p\forces^{**}_\PP\sigma R\tau$ if there is a set $Y\subseteq\PP\times M^\PP\times\{\subseteq,\in\}\times M^\PP$ in $M$, and a set $\vec D=\langle d_y\mid y\in Y\rangle$ in $M$ with the following properties:
\begin{enumerate}
  \item If $y\in Y$ is of the form $y=\langle q,\mu,\in,\nu\rangle$, then $d_y$ is a predense subset of $\PP$ below $q$.
  \item $\forall\langle q,\mu,\in,\nu\rangle\in Y\,\forall r\in d_{\langle q,\mu,\in,\nu\rangle}\,\exists\langle\rho,s\rangle\in\nu\ \left[r\leq_\PP s\,\land\,\langle r,\mu,\subseteq,\rho\rangle\in Y\land\langle r,\rho,\subseteq,\mu\rangle\in Y\right]$.
  \item If $y\in Y$ is of the form $y=\langle q,\mu,\subseteq,\nu\rangle$, then $d_y$ is of the form $d_y=\langle d_y^{\rho,s}\mid\langle\rho,s\rangle\in\mu\rangle$ and each $d_y^{\rho,s}$ is a predense subset of $\PP$ below $q\land s$ whenever $q\parallel_\PP s$, and the empty set otherwise.
  \item $\forall\langle q,\mu,\subseteq,\nu\rangle\in Y\,\forall\langle\rho,s\rangle\in\mu\,\forall r\in d_{\langle q,\mu,\subseteq,\nu\rangle}^{\rho,s}\ \langle r,\rho,\in,\nu\rangle\in Y$.
  \item $\langle p,\sigma,R,\tau\rangle\in Y$.
\end{enumerate}
In the above, we say that $Y$ and $\vec D$ \emph{witness} that $p\forces^{**}_\PP\!\sigma R\tau$. We write $p\forces^{**}_\PP\sigma=\tau$ to denote the statement that $p\forces^{**}_\PP\sigma\subseteq\tau$ and $p\forces^{**}_\PP\tau\subseteq\sigma$. For $R\in\{\subseteq,\in,=\}$,  we define that $p\forces^*_\PP\sigma R\tau$ if the class $\{q\leq_\PP p\mid q\Vdash^{**}_\PP\sigma R \tau\}$ is dense below $p$. 
We will verify that $\forces^*_\PP$ satisfies properties (A), (=) and (B), thus finishing the proof of the theorem.
We start with three easy claims.

\begin{definition}
  Given a $\PP$-name $\sigma$, we define the names appearing in (the transitive closure of) $\sigma$ by induction on name rank as
\[\tn(\sigma)=\bigcup\{\{\tau\}\cup\tn(\tau)\mid\exists p\ \langle\tau,p\rangle\in\sigma\}.\]
\end{definition}

\begin{claim}\label{easy2}
Let $p\in\PP$ and $\sigma,\tau\in M^\PP$. Then the following statements hold.
\begin{enumerate}
 \item[(i)] If $\langle\sigma,p\rangle\in\tau$ then $p\Vdash^{**}_\PP\sigma\in\tau$.
 \item[(ii)] If $Y,\vec D$ witness that $p\forces^{**}_\PP\sigma R\tau$, then for every element $\langle q,\mu,R',\nu\rangle\in Y$, $q\forces^{**}_\PP\mu R'\nu$.
\end{enumerate}
\end{claim}

\begin{proof}
 For (i), note that 
 $$Y=\{\langle q,\mu,\in,\nu\rangle\mid\langle\mu,q\rangle\in\nu\wedge\nu\in\{\tau\}\cup\tn(\tau)\}\cup\{\langle q,\mu,\subseteq,\mu\rangle\mid\langle\mu,q\rangle\in\tn(\tau)\}$$
 and $\vec D=\langle d_y\mid y\in Y\rangle$ with 
 $d_{\langle q,\mu,\in,\nu\rangle}=\{q\}$ and $d_{\langle q,\mu,\subseteq,\mu\rangle}^{\rho,s}=\{s\}$ witness that $p\forces^{**}_\PP\sigma\in\tau$.
 
 Property (ii) is immediate.
\end{proof}

\begin{claim}\label{easy}
The following statements hold for any $p\in\PP$, set names $\sigma,\tau\in M^\PP$ and $R\in\{\subseteq,\in,=\}$.
  \begin{enumerate}
    \item[(i)] If $p\forces^{**}_\PP\sigma R\tau$ and $q\leq_\PP p$ then $q\forces^{**}_\PP\sigma R\tau$.
    \item[(ii)] If $p\forces^{**}_\PP\sigma R\tau$ then $p\forces^*_\PP\sigma R\tau$.
    \item[(iii)] If $p\forces^*_\PP\sigma R\tau$ and $q\leq_\PP p$ then $q\forces^*_\PP\sigma R\tau$.
  \end{enumerate}
\end{claim}
\begin{proof}
   In order to verify Property (i), pick $Y$ and $\vec D=\langle d_y\mid y\in Y\rangle$ witnessing that $p\forces^{**}_\PP\sigma R\tau$, and let $q\leq_\PP p$. If it happens to be the case that $\langle q,\sigma,R,\tau\rangle\in Y$, then we are done by Claim \ref{easy2}, (ii). Otherwise, let $Z=Y\cup\{\langle q,\sigma,R,\tau\rangle\}$, and let $\vec E=\vec D\cup\{\langle\langle q,\sigma,R,\tau\rangle,d_{\langle p,\sigma,R,\tau\rangle}\rangle\}$. Then $Z$ and $\vec E$ witness that $q\forces^{**}_\PP\sigma R\tau$.
   
   (ii) is immediate from (i), and (iii) follows directly from the definition of $\forces^{**}_\PP$.
\end{proof}

\begin{claim}\label{witnessunionpretame}
  Assume that we are given a sequence $\langle\langle p_i,\sigma_i,R_i,\tau_i,Y_i,\vec D_i\rangle\mid i\in I\rangle\in M$ such that for every $i\in I$, $Y_i$ and $\vec D_i=\langle (d^i)_y\mid y\in Y_i\rangle$ witness that $p_i\forces^{**}_\PP\sigma_i R_i\tau_i$. Then there are $Y$ and $\vec D$, both in $M$, witnessing that for every $i\in I$, $p_i\forces^{**}_\PP\sigma_i R_i\tau_i$.
\end{claim}
\begin{proof}
  Let $Y=\bigcup_{i\in I}Y_i$ and $\vec D=\langle d_y\mid y\in Y\rangle$,
  where 
  $$d_y=\begin{cases}
   \bigcup\{(d^i)_y\mid i\in I, y\in Y_i\},&\textrm{if }y=\langle q,\mu,\in,\nu\rangle,\\
   \langle\bigcup\{(d^i)_y^{\rho,s}\mid i\in I,y\in Y_i\}\mid\langle\rho,s\rangle\in\mu\rangle,&\textrm{if }y=\langle q,\mu,\subseteq,\nu\rangle.
  \end{cases}$$
  It is straightforward to verify properties (1)--(5) above in order to show that $Y$ and $\vec D$ witness that  for every $i\in I$, $p_i\forces^{**}_\PP\sigma_i R_i\tau_i$.
\end{proof}

Now we are ready to prove that the relation $\Vdash^*_\PP$ satisfies properties (A), (=) and (B). Let $p\in\PP$ be a condition and let $\sigma,\tau\in M^\PP$ set names.

Assume that $p\forces^*_\PP\sigma\subseteq\tau$ and let $\langle\rho,s\rangle\in\sigma$ such that $p\parallel_\PP s$. Take $q\in\PP$ with $q\leq_\PP p,s$. By our assumption, there is $r\leq_\PP q$ with $r\Vdash_\PP^{**}\sigma\subseteq\tau$. Let $Y$ and $\vec D$ witness this. Then $y=\langle r,\sigma,\subseteq,\tau\rangle\in Y$ and hence $d_y^{\rho,s}$ is predense below $r$. Let $r'\parallel_\PP r$ with $r'\in d_y^{\rho,s}$. By Property (4), $\langle r',\rho,\in,\tau\rangle\in Y$. In particular, Claim \ref{easy2} (ii) implies that $r'\Vdash_\PP^{**}\rho\in\tau$. By Claim \ref{easy} (i), $t\Vdash_\PP^{**}\rho\in\tau$ for every condition $t$ with $t\leq_\PP r,r'$, proving (a).

Conversely, suppose that Property (a) holds for $p,\sigma$ and $\tau$ and let $q\leq_\PP p$. We need to find $r\leq_\PP q$ with $r\Vdash_\PP^{**}\sigma\subseteq\tau$. By Property (a), for each $\langle\rho,s\rangle\in\sigma$ the class
$$D^{\rho,s}=\{r\leq_\PP q\mid r\Vdash_\PP^{**}\rho\in\tau\}\in\C$$
is dense below $q\wedge s$. Applying pretameness in the sense of Remark \ref{rem:pretamewedge}, we can find $r\leq_\PP q$ and a sequence $\langle d^{\rho,s}\mid\langle\rho,s\rangle\in\sigma\rangle\in M$ such that each $d^{\rho,s}\subseteq D^{\rho,s}$ is predense below $r\wedge s$.  
Using Collection and Separation, we can find a set $X\in M$ such that each element of $X$ is of the form $\langle Y,\vec D\rangle$, where $Y,\vec D$ witness that $t\Vdash_\PP^{**}\rho\in\tau$ for some $\langle\rho,s\rangle\in\sigma$ and $t\in d^{\rho,s}$. Using Claim \ref{witnessunionpretame}, we can amalgamate the elements of $X$ to obtain $Y,\vec D$ which witness simultaneously that $t\Vdash_\PP^{**}\rho\in\tau$ for every such $t\in d^{\rho,s}$ with $\langle\rho,s\rangle\in\sigma$. 
Now let $Z=Y\cup\{\langle r,\sigma,\subseteq,\tau\rangle\}$, let $d_{\langle r,\sigma,\subseteq,\tau\rangle}=\langle d^{\rho,s}\mid\langle\rho,s\rangle\in\sigma\rangle$, and let $\vec E=\vec D\cup\{\langle\langle r,\sigma,\subseteq,\tau\rangle,d_{\langle r,\sigma,\subseteq,\tau\rangle}\rangle\}$. By the definition of $\forces_\PP^{**}$, $Z$ and $\vec E$ witness that $r\forces_\PP^{**}\sigma\subseteq\tau$. But this means that $p\forces_\PP^*\sigma\subseteq\tau$, as desired.

Assume that $p\forces^*_\PP\sigma\in\tau$ and let $q\leq_\PP p$. By our assumption, there is $r\leq_\PP q$ such that $r\forces^{**}_\PP\sigma\in\tau$. Let $Y$ and $\vec D$ witness this. Then $\langle r,\sigma,\in,\tau\rangle\in Y$ and hence $d_{\langle r,\sigma,\in,\tau\rangle}$ is predense below $r$. Let $r'\in d_{\langle r,\sigma,\in,\tau\rangle}$ be such that $r'$ and $r$ are compatible. By Property (2), there is $\langle\rho,s\rangle\in\tau$ such that $r'\leq_\PP s$ and $\langle r',\sigma,\subseteq,\rho\rangle,\langle r',\rho,\subseteq,\sigma\rangle\in Y$. This means that $r'\forces^{**}_\PP\sigma=\rho$. Let $t\leq_\PP r,r'$ witness that $r$ and $r'$ are compatible. Then by Claim \ref{easy}, (i), $t\forces^{**}_\PP\sigma=\rho$, and hence by Claim \ref{easy}, (ii), $t\forces^{*}_\PP\sigma=\rho$. This shows that Property (b) holds.

Conversely, suppose that Property (b) holds. Then by the definition of $\forces_\PP^*$,   $$D=\{r\leq_\PP p\mid\exists\langle\rho,s\rangle\in\tau\ \left[r\leq_\PP s\,\land\,r\forces^{**}_\PP\sigma=\rho\right]\}\in\C$$ is a dense class of conditions below $p$. We claim that $r\forces^{**}_\PP\sigma\in\tau$ for every $r\in D$. Pick some condition $r\in D$, and let $\langle\rho,s\rangle\in\tau$ witness that $r\in D$. By Claim \ref{easy2} (i), $s\forces^{**}_\PP\rho\in\tau$ and hence by Claim \ref{easy} (i), $r\forces^{**}_\PP\rho\in\tau$. Using Claim \ref{witnessunionpretame}, we can take $Y$ and $\vec D$ witnessing that $r\forces^{**}_\PP\rho\in\tau$ and that $r\forces_\PP^{**}\sigma=\rho$. 
Let $Z=Y\cup\{r,\sigma,\in,\tau\}$ and $\vec E=\vec D\cup\{\langle\langle r,\sigma,\in,\tau\rangle,\{r\}\rangle\}$. Then $Z$ and $\vec E$ witness that $r\forces^{**}_\PP\sigma\in\tau$. But this means exactly that $p\forces^*_\PP\sigma\in\tau$, as desired.
\end{proof}

In \cite[Theorem 1.3]{ClassForcing}, it is shown that the forcing theorem may fail in class forcing. The notion of forcing used to prove this adds a binary predicate $E$ on $\omega$ so that $\langle\omega,E\rangle$ isomorphic to $\langle M,\in\rangle$. The idea is that if the forcing theorem was satisfied one would obtain a first-order truth predicate in the ground model. 
In this section, we will prove a density version of this theorem. 
The following easy observation will be a key ingredient for our proof.

\begin{lemma}\label{lemma:pretame cofinal function}
 Suppose that $\MM$ is a countable transitive model of $\GB^-$ with a hierarchy, and let $\PP$ be a notion of class forcing for $\MM$ which satisfies the forcing theorem. Then $\PP$ is pretame if and only if there exist no set $a\in M$, $\dot F\in\C^\PP$ and $p\in\PP$ such that $p\Vdash_\PP\anf{\dot F:\check a\ra\On^M\text{ is cofinal}}$.
\end{lemma}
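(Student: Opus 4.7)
The statement has two directions, and I would prove the forward one first. For the forward direction (pretameness implies no such $\dot F$), I would assume for contradiction that $p\Vdash_\PP\anf{\dot F:\check a\to\On^M\text{ is cofinal}}$ for some $\dot F\in\C^\PP$, $a\in M$, $p\in\PP$. For each $i\in a$, the forcing theorem for $\PP$ makes the class $D_i$ of $r\leq_\PP p$ deciding a value for $\dot F(\check i)$ dense below $p$. Applying pretameness below $p$ (using the reformulation of Remark \ref{rem:pretamewedge}) yields $q\leq_\PP p$ and $\langle d_i\mid i\in a\rangle\in M$ with $d_i\subseteq D_i$ predense below $q$. For $i\in a$ and $r\in d_i$, let $\beta_{i,r}\in\On^M$ be the unique ordinal with $r\Vdash_\PP\dot F(\check i)=\check\beta_{i,r}$. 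By Collection in $\MM$, the collection of such $\beta_{i,r}$ is bounded by some $\alpha\in\On^M$. Since each $d_i$ is predense below $q$, every generic $G\ni q$ satisfies $\dot F^G(i)<\alpha$ for every $i\in a$, hence $q\Vdash_\PP\dot F[\check a]\subseteq\check\alpha$, contradicting cofinality.

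For the reverse direction I argue contrapositively: if $\PP$ is not pretame, then I exhibit a cofinal $\dot F$. Let $p\in\PP$, $I\in M$, and $\langle D_i\mid i\in I\rangle\in\C$ of dense subclasses of $\PP$ witness the failure of pretameness, and let $C\in\C$ be a hierarchy on $M$. Using the forcing theorem, define
\[\dot F=\{\langle\sigma_{i,\alpha},r\rangle\mid i\in I,\,\alpha\in\On^M,\,r\in\PP,\,r\Vdash_\PP\check\alpha=\min\{\crnk{C}{s}\mid s\in\dot G\cap\check D_i\}\}\in\C^\PP,\]
where $\sigma_{i,\alpha}$ denotes a canonical name for the ordered pair $\langle i,\alpha\rangle$ and $\dot G$ a canonical class name for the generic filter. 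For any $\PP$-generic $G$ with $p\in G$, $\dot F^G$ is the function on $I$ sending $i\mapsto\min\{\crnk{C}{s}\mid s\in G\cap D_i\}$; the minimum exists by density of $D_i$, so $\dot F^G\colon I\to\On^M$ is genuinely a function. To verify cofinality, fix $q\leq_\PP p$ and $\alpha\in\On^M$: by Separation in $\MM$ applied to $I\times C_{\alpha+1}$, the sequence $\langle D_i\cap C_{\alpha+1}\mid i\in I\rangle\in M$, and each $D_i\cap C_{\alpha+1}\subseteq D_i$. The failure of pretameness (which applies at $q\leq_\PP p$ in particular) then produces some $i\in I$ and some $r\leq_\PP q$ incompatible with every element of $D_i\cap C_{\alpha+1}$. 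Hence in any generic $G$ with $r\in G$, every $s\in G\cap D_i$ satisfies $\crnk{C}{s}>\alpha$, so $\dot F^G(i)>\alpha$, i.e., $r\Vdash_\PP\dot F(\check i)>\check\alpha$. By the forcing theorem and density, this suffices to conclude $p\Vdash_\PP\anf{\dot F\colon\check I\to\On^M\text{ is cofinal}}$.

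The main technical subtlety, and the reason the forcing theorem appears explicitly in the hypothesis, is producing $\dot F$ as a genuine element of $\C^\PP$ whose $G$-evaluation really is a function: without the definability lemma for $\Vdash_\PP$ there is no uniform ground-model recipe for packaging the minimum $C$-rank of $G\cap D_i$ into a class $\PP$-name. Given the definable forcing relation, everything else reduces to a transparent Separation and Collection computation combined with the hierarchy.
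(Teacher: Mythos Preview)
Your proof is correct and follows essentially the same approach as the paper's: both directions use the same key ideas (pretameness bounds the witnesses for $\dot F$; conversely, the hierarchy lets one define $F(i)$ as the minimal rank of a condition in $G\cap D_i$, and failure of pretameness forces this to be unbounded). The only noteworthy difference is cosmetic: the paper first fixes a generic $G$, defines $F$ semantically in $\MM[G]$, and then invokes Observation~\ref{defsubsetclasses} to extract a name $\dot F$ and a condition $q\in G$ forcing its defining property, whereas you write down $\dot F$ explicitly using the definability lemma and argue directly that $p$ itself forces cofinality---but the underlying argument (that $\langle D_i\cap C_{\alpha+1}\mid i\in I\rangle\in M$ cannot consist of sets all predense below any $q\leq_\PP p$) is identical.
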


\begin{proof}
Suppose first that $\PP$ is pretame, that $a\in M$, and that $p\Vdash_\PP\anf{\dot F:\check a\ra\On^M}$. Let
$$D_x=\{q\leq_\PP p\mid\exists\gamma\in\On^M\,q\Vdash_\PP\dot F(\check x)=\check\gamma\}$$
for $x\in a$. By pretameness, there are $q\leq_\PP p$ and a sequence $\langle d_x\mid x\in a\rangle\in M$ such that each $d_x\subseteq D_x$ is predense below $q$. Let $$\beta=\sup\{\gamma+1\mid\gamma\in\On^M\,\land\,\exists x\in a\,\exists r\in d_x\ r\Vdash_\PP\dot F(\check x)=\check\gamma\}\in\On^M.$$
Then $q\Vdash_\PP\ran{\dot F}\subseteq\check\beta$, hence $p$ does not force the range of $\dot F$ to be cofinal in $\On^M$.

Conversely, suppose that $\langle D_i\mid i\in I\rangle\in\C$ with $I\in M$ is a sequence of dense subclasses of $\PP$ and $p\in\PP$ is such that there exist no $q\leq_\PP p$ and $\langle d_i\mid i\in I\rangle\in M$ with each $d_i\subseteq D_i$ predense below $q$. Let $\langle C_\alpha\mid\alpha\in\On^M\rangle$ be a hierarchy on $M$. Now let $G$ be $\PP$-generic over $\MM$ with $p\in G$. In $\MM[G]$, let $F:I\ra\On^M$ be the function defined by $$F(i)=\min\{\alpha\in\On^M\mid C_\alpha\cap D_i\cap G\neq\emptyset\}$$ for $i\in I$.
Using the forcing theorem and Observation \ref{defsubsetclasses}, we may choose a name $\dot F\in\C$ for $F$ and a condition $q\leq_\PP p$ in $G$ such that the above property of $\dot F$ is forced by $q$. But then $q$ forces that $\dot F$ is cofinal in the ordinals -- otherwise we could strengthen $q$ to some $r$ which forces the range of $\dot F$ to be contained in some ordinal $\alpha$ and so $d_i=D_i\cap C_\alpha$ would be predense below $r$ for every $i\in I$, contradicting our assumption. 
\end{proof}

If we additionally assume the existence of a set-like wellorder, we obtain the following strengthening of Lemma \ref{lemma:pretame cofinal function}.  

\begin{lemma} \label{lem:surjection onto Ord from cofinal function} 
Suppose that $\MM=\langle M,\C\rangle$ is a countable transitive model of $\GBC^-$
and that $\PP$ is a notion of class forcing for $\MM$ which satisfies the forcing theorem. 
Assume that $\dot{F}\in \C^\PP$ and $p\in\PP$ are such that $p\Vdash_\PP\anf{\dot{F}\colon\check\kappa\rightarrow \On^M\text{ is cofinal}}$ for some $M$-cardinal $\kappa$.
Then there is a class name $\dot{E}\in \C$ and $q\leq_\PP p$ such that $q\Vdash_\PP\anf{ \dot{E}\colon\check\kappa\rightarrow\On^M\text{ is surjective.}}$ 
\end{lemma}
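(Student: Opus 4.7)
The plan is to combine the cofinal name $\dot F$ with the set-like well-order $\prec$ of $M$ (which lies in $\C$, provided by $\GBC^-$) to canonically produce $\dot E$. Let $b\colon\On^M\to M$ denote the class enumeration induced by $\prec$.

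First, observe that $\kappa$ must be infinite, since otherwise a cofinal function $\check\kappa\to\On^M$ would force $\On^M$ to be bounded by a finite ordinal, contradicting $\MM\models\GB^-$. Fix a bijection $j\colon\kappa\to\kappa\times\kappa$ in $M$. Using $\prec$, for each ordinal $\alpha\in\On^M$, let $h_\alpha$ be the $\prec$-least bijection in $M$ from $|\alpha|^M$ onto $\alpha$; the class sequence $\langle h_\alpha\mid\alpha\in\On^M\rangle$ is an element of $\C$.

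In the generic extension $\MM[G]$, tentatively define $E\colon\kappa\to\On^M$ by setting, for $\xi<\kappa$ with $j(\xi)=(\eta,\zeta)$,
\[
  E(\xi)=\begin{cases}h_{F(\eta)}(\zeta)&\text{if }\zeta<|F(\eta)|^M,\\ 0 &\text{otherwise.}\end{cases}
\]
Since $\PP$ satisfies the forcing theorem, Observation~\ref{defsubsetclasses} yields a class name $\dot E\in\C^\PP$ for this $E$. For surjectivity, given $\beta\in\On^M$, the cofinality of $F$ provides some $\eta<\kappa$ with $F(\eta)>\beta$, and letting $\zeta:=h_{F(\eta)}^{-1}(\beta)$, we have $E(j^{-1}(\eta,\zeta))=\beta$ provided $\zeta<\kappa$.

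The main obstacle is the case $|F(\eta)|^M>\kappa$, where a direct pairing via $j$ fails because $\zeta\geq\kappa$. To surmount this, we strengthen $p$ to $q\leq_\PP p$ by a density argument exploiting the cofinality of $\dot F$: either (i) we arrange below $q$ that $\dot F$ takes cofinally many values of $M$-cardinality $\leq\kappa$ (so the simple construction above works), or (ii) we iteratively modify the construction by composing $\dot F$ with further definable cofinal functions derived from $\dot F$ and $\prec$, effectively collapsing each $|F(\eta)|^M$ down within $\kappa$ via a tree-like pairing scheme. The definability of the resulting $\dot E$ throughout is ensured by the forcing theorem. The hardest part is step (ii), which requires a careful recursive construction combining $\dot F$, $\prec$ and the class sequence $\langle h_\alpha\rangle$ to produce a definable collapse that is forced to be surjective by $q$.
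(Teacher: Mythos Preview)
Your proposal correctly identifies the central obstacle---that $|F(\eta)|^M$ may exceed $\kappa$, so the naive pairing construction fails---but your resolution is not a proof. Case~(i) cannot in general be arranged: nothing prevents, say, $\kappa=\omega$ with $p$ forcing $\dot F(n)=\aleph_n^M$ for every $n$, in which case no $q\leq_\PP p$ forces $\dot F$ to take cofinally many values of $M$-cardinality at most $\kappa$. Case~(ii) is only a slogan: phrases like ``tree-like pairing scheme'' and ``iteratively modify by composing with further definable cofinal functions'' do not describe a construction, and it is not clear what recursive process could collapse arbitrarily large $|F(\eta)|^M$ down to $\kappa$ using only $\dot F$ and $\prec$. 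You have located the difficulty but not surmounted it.

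The paper's argument avoids the cardinality obstacle entirely by working with the ground-model classes of \emph{possible values}
\[
A_{r,\alpha}=\{\beta\in\On^M\mid\exists s\leq_\PP r\ s\Vdash_\PP\dot F(\check\alpha)=\check\beta\}.
\]
Cofinality of $\dot F$ below any $r$ guarantees that some $A_{r,\alpha}$ is a proper class. One then constructs in $\C$ a partition $\langle D_\beta\mid\beta\in\On^M\rangle$ of $\On^M$ such that every proper-class $A_{r,\alpha}$ meets every $D_\beta$ in a proper class, and defines $E(\alpha)=\beta$ iff $\dot F^G(\alpha)\in D_\beta$. Surjectivity is then a density argument: given $r\leq_\PP q$ and $\beta$, pick $\alpha$ with $A_{r,\alpha}$ a proper class, choose $\gamma\in A_{r,\alpha}\cap D_\beta$, and strengthen to $s\leq_\PP r$ forcing $\dot F(\check\alpha)=\check\gamma$, hence $\dot E(\check\alpha)=\check\beta$. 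This partition idea is the missing ingredient in your attempt.
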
 

\begin{proof}
Since $\PP$ satisfies the forcing theorem, $$A=\{\langle r,\alpha,\beta \rangle\mid \exists s\leq_\PP r\ s\Vdash_\PP \dot{F}(\check\alpha)=\check\beta\}\in \C.$$ 
Hence, making use of a set-like wellorder, there is a sequence $C=\langle C_i\mid i\in\On^M\rangle\in\C$ such that each $C_{i}$ is of the form
$$A_{r,\alpha}=\{\beta\in\On^M\mid \exists s\leq_\PP r\ s\Vdash_\PP \dot{F}(\check\alpha)=\check\beta\}$$ 
for some $r\in\PP$ and $\alpha\in \On^M$ such that $A_{r,\alpha}$ is a proper class, 
and moreover each such class $A_{r,\alpha}$ appears unboundedly often in $C$. 

\begin{claim} 
There is a class $D=\langle D_\beta\mid\beta\in\On^M\rangle$ such that the classes $D_\beta$ form a partition of $\On^M$ and  
$C_\alpha\cap D_\beta$ is a proper class for all $\alpha,\beta\in\On^M$. 
\end{claim} 
\begin{proof} 
Let $k:\On^M\times\On^M\ra\On^M$ be a bijection in $\C$ such that whenever $\bar\gamma<\gamma$,
$k(\beta,\bar\gamma)<k(\beta,\gamma)$. Now we recursively define
sets of ordinals $D_\beta^\gamma\in M$ in the following way: We start with $D_0^0=\emptyset$. Let $\alpha,\beta,\gamma\in\On^M$ be such that $\alpha=k(\beta,\gamma)$ and assume that for 
all $\bar\beta,\bar\gamma$ with $k(\bar\beta,\bar\gamma)<\alpha$, $D_{\bar\beta}^{\bar\gamma}$ has already been defined. 
Now let $D_\beta^\gamma=\bigcup_{\bar\gamma<\gamma}D_\beta^{\bar\gamma}\cup\{\delta\}$, where 
$\delta$ is the least ordinal in $C_\gamma\setminus\bigcup\{D_{\bar\beta}^{\bar\gamma}\mid k(\bar\beta,\bar\gamma)<\alpha\}$.
Finally, put $D_\beta=\bigcup_{\gamma\in\On^M}D_\beta^\gamma$ for each $\beta\in\On^M$. 
By construction, if $\beta\neq\bar\beta$ then $D_\beta$ and $D_{\bar\beta}$ are disjoint. Moreover,
since $C_\alpha$ appears unboundedly often in the enumeration defined above, $C_\alpha\cap D_\beta$
is a proper class for all $\alpha,\beta\in\On^M$. 
\end{proof} 
Suppose that $D$ is a class as in the statement of the previous claim. 
If $G$ is $\PP$-generic over $\MM$ with $p\in G$, let $E\colon \kappa\rightarrow \On^M$ be the function given by $E(\alpha)=\beta$ if $\dot{F}^G(\alpha)\in D_\beta$. 
Since $\PP$ satisfies the forcing theorem, by Observation \ref{defsubsetclasses}, there is a class name $\dot{E}\in\C$ and a condition $q\in G$ below $p$ which forces that $\dot E$ satisfies this definition. 

\begin{claim}
$q\Vdash_\PP\anf{\dot{E}\colon\check\kappa\rightarrow\On^M\text{ is surjective}}.$ 
\end{claim} 
\begin{proof} 
Suppose the contrary. Since $\PP$ satisfies the forcing theorem, there is a condition $r\leq_\PP q$ and some
ordinal $\beta$ such that $r\Vdash_\PP\dot E(\check\alpha)\neq\check\beta$ for all $\alpha<\kappa$. 
Then there is $\alpha<\kappa$ such that $A_{r,\alpha}=\{\gamma\in\On^M\mid\exists s\leq_\PP r\ s\Vdash_\PP\dot F(\check\alpha)=\check\gamma\}$ is a proper class,
since otherwise $r$ forces that the range of $\dot F$ is bounded in $\On^M$, contradicting our assumption on $\dot{F}$. 
By the previous claim, $A_{r,\alpha}\cap D_\beta$ is nonempty. Choose $\gamma\in A_{r,\alpha}\cap D_\beta$ and $s\leq_\PP r$ 
so that $s\Vdash_\PP\dot{F}(\check\alpha)=\check\gamma$. 
Then $s\Vdash_\PP\dot{E}(\check\alpha)=\check\beta$, contradicting our choice of $r$ and of $\beta$. 
\end{proof} 
This completes the proof of Lemma \ref{lem:surjection onto Ord from cofinal function}. 
\end{proof}

\begin{definition}Let $M$ be a model of $\ZF^-$.
 A relation $T\subseteq\Fm\times M$ is a \emph{first-order truth predicate for $M$} if $$\langle\gbr{\varphi},x\rangle\in T ~ \Longleftrightarrow ~ \langle M,\in\rangle\models\varphi(x)$$ holds for every $\gbr{\varphi}\in\Fm$ and every $x\in M$, where $\Fm$ denotes the set of G\"odel codes of $\L_\in$-formulae with one free variable. 
\end{definition} 

Using the previous lemma, we are now ready to prove the main result of this section. 

\begin{theorem}\label{thm:failure ft}
Suppose that $\MM=\langle M,\C\rangle$ is a countable transitive model of $\GBC^-$ such that $\C$ does not contain a first-order truth predicate for $M$.\footnote{Note that by Tarski's theorem on the undefinability of truth, every model of the form $\langle M,\C\rangle\models\GBC^-$, where $\C$ only consists of the definable subsets of $M$, satisfies these requirements.} If $\PP$ is a non-pretame
notion of class forcing for $\MM$, then there is a notion of class forcing $\QQ$ for $\MM$ and a dense embedding $\pi\colon\PP\to\QQ$ in $\C$ such that $\QQ$ does not satisfy the forcing theorem.
\end{theorem}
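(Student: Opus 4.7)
If $\PP$ itself fails the forcing theorem, we shall take $\QQ=\PP$ with the identity dense embedding. Otherwise, $\PP$ is non-pretame and satisfies the forcing theorem, so by Lemmas \ref{lemma:pretame cofinal function} and \ref{lem:surjection onto Ord from cofinal function} we obtain an $M$-cardinal $\kappa$, a condition $q\in\PP$, and a class name $\dot E\in\C^\PP$ such that $q\Vdash_\PP\anf{\dot E\colon\check\kappa\to\On^M\text{ is surjective}}$. Composing with a class bijection $\On^M\to M$ from the set-like well-order of $M$ in $\C$ (guaranteed by $\GBC^-$) yields a class $\PP$-name $\dot F$ for a surjection $\check\kappa\to\check M$.

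The plan, following the strategy of \cite[Theorem 1.3]{ClassForcing}, is to build $\QQ=\PP_S$ by adding suprema encoding the pullback of $\in^M$ along $\dot F$. For each $\langle\alpha,\beta\rangle\in\kappa\times\kappa$ and $i\in\{0,1\}$, set
\[A^i_{\alpha,\beta}=\{r\leq_\PP q\mid r\Vdash_\PP(\dot F(\check\alpha)\in\dot F(\check\beta))^{(i)}\}\in\C,\]
where $(\cdot)^{(1)}=\cdot$ and $(\cdot)^{(0)}=\neg(\cdot)$; put $S=\langle A^i_{\alpha,\beta}\mid\alpha,\beta<\kappa,\ i\in\{0,1\}\rangle\in\C$, let $\QQ=\PP_S$, and denote the new suprema by $b^i_{\alpha,\beta}$. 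We shall consider the set $\QQ$-name $\dot H=\{\langle\check{\langle\alpha,\beta\rangle},b^1_{\alpha,\beta}\rangle\mid\alpha,\beta<\kappa\}\in M^\QQ$; for any $\QQ$-generic $G$ containing $q$, one checks that $\dot H^G=\{\langle\alpha,\beta\rangle\in\kappa^2\mid\dot F^G(\alpha)\in\dot F^G(\beta)\}$. A routine induction on $\L_\in$-formulas, rewriting every occurrence of $v=w$ via extensionality as $\forall z(z\in v\leftrightarrow z\in w)$ and exploiting the forced surjectivity of $\dot F^G$, will yield
\[\langle\kappa,\dot H^G\rangle\models^*\varphi(\vec\alpha)\iff\langle M,\in\rangle\models\varphi(\dot F^G(\vec\alpha))\]
for every $\varphi(\vec v)\in\Fm$ and $\vec\alpha\in\kappa^{<\omega}$, where $\models^*$ denotes the corresponding relativized satisfaction on the set-sized structure $\langle\kappa,\dot H^G\rangle$.

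To derive the failure of the forcing theorem for $\QQ$, suppose toward contradiction that it holds. The plan is to consider the $\L_\in$-formula $\Psi(v_0,v_1)$, with parameters $\check\kappa,\dot H,\dot F$, expressing
\[\exists\alpha<\check\kappa\,[\dot F(\alpha)=v_1\,\land\,\mathrm{Sat}^*(\check\kappa,\dot H,v_0,\alpha)],\]
where $\mathrm{Sat}^*$ is the first-order definable satisfaction predicate for the set-sized structure $\langle\check\kappa,\dot H\rangle$ with equality interpreted via $\in$-extensionality. The displayed equivalence will give $q\Vdash_\QQ\Psi(\check{\gbr\varphi},\check x)\iff M\models\varphi(x)$, independently of the particular generic, so by the forcing theorem for $\QQ$,
\[T=\{\langle\gbr\varphi,x\rangle\in\Fm\times M\mid q\Vdash_\QQ\Psi(\check{\gbr\varphi},\check x)\}\in\C\]
would be a first-order truth predicate for $\langle M,\in\rangle$, contradicting the hypothesis.

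The main obstacle to make rigorous is ensuring that $\mathrm{Sat}^*$ is genuinely first-order definable in $\MM[G]$: this requires $\dot H$ to evaluate to a set (which holds since $\dot H\in M^\QQ$ is a set name, the index set for the added suprema lying in $M$) and $\MM[G]$ to satisfy the basic fragment of $\ZF^-$ needed for Tarski's recursive construction of satisfaction on set-sized structures, which should follow from the forcing theorem for $\QQ$ together with the absoluteness of Tarski's definition.
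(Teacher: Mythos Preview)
Your overall strategy matches the paper's: obtain a forced surjection $\dot F\colon\kappa\to M$, add set-many suprema to $\PP$ encoding the pullback of $\in^M$ along $\dot F$, and argue that the forcing theorem for the resulting $\QQ$ would yield a truth predicate in $\C$. The construction of $\QQ$ and of the set name $\dot H$ (the paper's $\dot E$) is essentially the same.

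The genuine gap is in your last two paragraphs. You need $\MM[G]$ to correctly evaluate $\mathrm{Sat}^*$, i.e., to contain the Tarskian satisfaction relation for $\langle\kappa,\dot H^G\rangle$ as a set. But $\QQ$ is not pretame (the same sequence of dense classes witnessing non-pretameness of $\PP$ works for $\QQ$, since $\PP$ is dense in $\QQ$), so by Theorem~\ref{thm:pretame axioms} and Theorem~\ref{thm:sep rep} both Replacement and Separation fail in $\MM[G]$. The recursive construction of satisfaction---even truncated at a fixed finite complexity---requires instances of Separation to carve out each partial satisfaction level from $\Fm\times\kappa^{<\omega}$, and there is no reason these instances survive. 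Your remark that this ``should follow from the forcing theorem for $\QQ$ together with the absoluteness of Tarski's definition'' does not help: the forcing theorem concerns definability of the forcing relation over the \emph{ground} model and says nothing about which axioms hold in the extension.

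The paper circumvents this by never relying on any axioms in $\MM[G]$. Instead of invoking a satisfaction predicate there, it translates each $\L_\in$-formula $\varphi$ and tuple $\vec\alpha\in\kappa^k$ into an \emph{infinitary quantifier-free} formula $\varphi^*_{\vec\alpha}$ in the forcing language $\L_{\On,0}^{\Vdash}(\QQ,M)$, replacing $\exists v_k$ by the set-sized disjunction $\bigvee_{\beta<\kappa}$ and atomic $v_i\in v_j$ by $\op(\check\alpha_i,\check\alpha_j)\in\dot E$. The crucial input is \cite[Lemma~5.2]{ClassForcing}: the definability lemma for atomic formulae over $\QQ$ implies the \emph{uniform} forcing theorem for all $\L_{\On,0}^{\Vdash}(\QQ,M)$-formulae. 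One then proves, by induction on $\varphi$ carried out entirely in the ground model, that $M\models\varphi(\vec x)$ is equivalent to a $\QQ$-forcing statement about $\varphi^*_{\vec\alpha}$, and reads off the truth predicate as a class in $\C$.
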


\begin{proof}
Without loss of generality, we may assume that $\PP$ satisfies the forcing theorem. 
\setcounter{claim}{0}
Using Lemma \ref{lemma:pretame cofinal function}, the axiom of choice, and Lemma \ref{lem:surjection onto Ord from cofinal function}, we can choose $p\in\PP$ and a class name $\dot F$ such that
$$p\Vdash_\PP\anf{\dot F:\check\kappa\ra\check M\text{ is surjective}}$$
for some $M$-cardinal $\kappa$.

We extend $\PP$ to a forcing notion $\QQ$ by adding suprema $p_{\alpha,\beta}$
for the classes $$D_{\alpha,\beta}=\{q\leq_\PP p\mid q\Vdash_\PP\dot F(\check\alpha)\in\dot F(\check\beta)\}$$
for all $\alpha,\beta<\kappa$ such that $D_{\alpha,\beta}$ is nonempty. Let $X=\{\langle\alpha,\beta\rangle\in\kappa^2\mid D_{\alpha,\beta}\neq\emptyset\}$.
The following arguments generalize the proof of \cite[Theorem 1.3]{ClassForcing}.
Define $$\dot E=\{\langle\op(\check\alpha,\check\beta),p_{\alpha,\beta}\rangle\mid\langle\alpha,\beta\rangle\in X\}\in M^\QQ.$$
Assume for a contradiction that $\QQ$ satisfies the forcing theorem. We will use $\dot E$ to show that $\C$ contains a first-order
truth predicate for $M$, contradicting our assumptions.

\begin{claim}Let $G$ be $\QQ$-generic over $\MM$ with $p\in G$ and let $E=\dot E^G$ and $F=\dot F^G$. Then in $M[G]$ it holds that 
$\langle\alpha,\beta\rangle\in E$ if and only if $M\models F(\alpha)\in F(\beta)$.
\end{claim}

\begin{proof}
Let $\alpha,\beta<\kappa$ such that $\langle\alpha,\beta\rangle\in E$. Then $\langle\alpha,\beta\rangle\in X$
and $p_{\alpha,\beta}\in G$. But by definition of $\leq_\QQ$, $p_{\alpha,\beta}\Vdash_\QQ\dot F(\check\alpha)\in\dot F(\check\beta)$
and therefore $F(\alpha)\in F(\beta)$ as desired. Conversely, suppose that $x\in y$ in $M$. 
Since $F$ is surjective, there are $\alpha,\beta<\kappa$ such that $F(\alpha)=x$ and $F(\beta)=y$.
Moreover, there must be $q\in G$ which forces that $\dot F(\check\alpha)\in\dot F(\check\beta)$ and so 
$q\leq_\QQ p_{\alpha,\beta}$. In particular, $p_{\alpha\beta}\in G$ and so $\langle\alpha,\beta\rangle\in E$. 
\end{proof}

The next step will be to translate $\L_\in$-formulae into infinitary quantifier-free formulae in the forcing language of $\QQ$, where $\in$ is translated to $\dot E$. The infinitary language $\L_{\On,0}^{\Vdash}(\QQ,M)$ is built up from the atomic formulae $\check q\in\dot G$, $\sigma\in\tau$ and $\sigma=\tau$ for $q\in\QQ$ and $\sigma,\tau\in M^\PP$, the negation operator and set-sized conjunctions and disjunctions. We denote by $\Fm_{\On,0}^{\Vdash}(\QQ,M)$ the class of G\"odel codes of $\L_{\On,0}^\Vdash(\QQ,M)$-formulae.\footnote{A detailed description of $\L_{\On,0}^{\Vdash}(\QQ,M)$ is given in \cite[Section 5]{ClassForcing}.}

Inductively, we assign to every $\L_\in$-formula $\varphi$ with free variables in $\{v_0,\dots,v_{k-1}\}$ and 
all $\vec\alpha=\alpha_0,\dots,\alpha_{k-1}\in\kappa^{k}$ an $\L_{\On,0}^{\Vdash}(\QQ,M)$-formula in the following way:
\begin{align*}
 (v_i=v_j)_{\vec\alpha}^*&=(\check{\alpha}_i=\check{\alpha}_j)\\
 (v_i\in v_j)_{\vec\alpha}^*&=(\op(\check{\alpha}_i,\check{\alpha}_j)\in\dot E)\\
 (\neg\varphi)_{\vec\alpha}^*&=(\neg\varphi_{\vec\alpha}^*)\\
 (\varphi\vee\psi)_{\vec\alpha}^*&=(\varphi_{\vec\alpha}^*\vee\psi_{\vec\alpha}^*)\\
 (\exists v_k\varphi)_{\vec\alpha}^*&=(\bigvee_{\beta<\kappa}\varphi_{\vec\alpha,\beta}^*).
\end{align*}
Note that, by \cite[Lemma 5.2]{ClassForcing},
if $\QQ$ satisfies the definability lemma for either $\anf{v_0\in v_1}$ or $\anf{v_0=v_1}$, then it satisfies 
the uniform forcing theorem for all $\L_{\On,0}^{\Vdash}(\QQ,M)$-formulae, i.e. $$\{\langle p,\gbr{\varphi}\rangle\in \QQ\times\Fm_{\On,0}^{\Vdash}(\QQ,M)\mid p\Vdash_\QQ^\MM\varphi\}\in\C$$
and $\QQ$ satisfies the truth lemma for every $\L_{\On,0}^{\Vdash}(\QQ,M)$-formula $\varphi$ over $\MM$.
The following claim will thus allow us to define a first-order truth predicate over $M$.

\begin{claim}\label{claim:truth}
 For every $\L_\in$-formula $\varphi$ with free variables among $\{v_0,\dots,v_{k-1}\}$ and for all $\vec x=x_0,\dots,x_{k-1}$ in $M$,
 the following statements are equivalent:
 \begin{enumerate-(1)}
  \item $M\models\varphi(\vec x)$.
  \item $\forall\vec\alpha\in\kappa^k\,\forall q\leq_\PP p\ \left[q\Vdash_\PP\anf{\forall i<k\ \dot F(\check\alpha_i)=\check x_i}\ra q\Vdash_\QQ\varphi_{\vec\alpha}^*\right]$.
  \item $\exists\vec\alpha\in\kappa^k\,\exists q\leq_\PP p\ \left[q\Vdash_\PP\anf{\forall i<k\ \dot F(\check\alpha_i)=\check x_i}\wedge q\Vdash_\QQ\varphi_{\vec\alpha}^*\right]$.
 \end{enumerate-(1)}
\end{claim}

\begin{proof}
 Observe that since $p\Vdash_\PP\anf{\dot F:\check\kappa\ra M\text{ is surjective}}$, (2) always implies (3). 
We argue by induction on the construction of the formula $\varphi$ that (1) implies (2) and that (3) implies (1).

We start with the atomic formula $\anf{v_i\in v_j}$. 
Suppose first that (1) holds, that is $M\models x\in y$. Let $\vec\alpha\in\kappa^k$, let $i,j<k$, and let $q\leq_\PP p$ be such that $q\Vdash_\PP\dot F(\check\alpha_i)=\check x\wedge\dot F(\check\alpha_j)=\check y$.
Take a $\QQ$-generic filter with $q\in G$. Since $q\leq_\QQ p_{\alpha_i,\alpha_j}$, we have $p_{\alpha_i,\alpha_j}\in G$. 
Moreover, $\langle\alpha_i,\alpha_j\rangle\in\dot E^G$, so (2) holds.
Assume now that (3) holds, i.e. there is $\vec\alpha\in\kappa^k$, $i,j<k$ and $q\leq_\PP p$ such that $q\Vdash_\PP\dot F(\check\alpha_i)=\check x\wedge\dot F(\check\alpha_j)=\check y$
and such that $q\Vdash_\QQ(v_i\in v_j)_{\vec\alpha}^*$. Let $G$ be $\QQ$-generic with $q\in G$. Then $\langle\alpha_i,\alpha_j\rangle\in\dot E^G$, and so $p_{\alpha_i,\alpha_j}\in G$. In particular, this means 
that $x=\dot F^G(\alpha_i)\in\dot F^G(\alpha_j)=y$. The proof for $\anf{v_i=v_j}$ is similar.

Next we turn to negations. Suppose first that $M\models\neg\varphi(\vec x)$ and let $\vec\alpha\in\kappa^k$ and $q\leq_\PP p$ with $q\Vdash_\PP\forall i<k\,(\dot F(\check\alpha_i)=\check x_i)$.
Assume, towards a contradiction, that $q\nVdash_\QQ\neg\varphi_{\vec\alpha}^*$. Then there is $r\leq_\QQ q$
with $r\Vdash_\QQ\varphi_{\vec\alpha}^*$. By density, we may assume that $r\in\PP$.
Then $r\leq_\PP p$ and so $\vec\alpha$ and $r$ witness (3) for $\varphi$. By our inductive hypothesis we obtain that $M\models\varphi(\vec x)$,
a contradiction. The implication from (3) to (1) is similar.

Suppose now that $M\models(\varphi\vee\psi)(\vec x)$. Without loss of generality, assume that $M\models\varphi(\vec x)$.
Now if $\vec\alpha\in\kappa^k$ and $q\leq_\PP p$ with $q\Vdash_\PP\forall i<k\,(\dot F(\check\alpha_i)=\check x_i)$, by induction
$q\Vdash_\QQ\varphi_{\vec\alpha}^*$. But then in particular $q\Vdash_\QQ(\varphi\vee\psi)_{\vec\alpha}^*$. 
In order to see that (3) implies (1), suppose that $\vec\alpha\in\kappa^k$ and $q\leq_\PP p$ witness (3). Then there must be a strengthenig $r\in\QQ$ of $q$ which satisfies, without loss of generality, $r\Vdash_\QQ\varphi_{\vec\alpha}^*$.
By density of $\PP$ in $\QQ$, we can assume that $r\in\PP$. This means that $\vec\alpha$ and $r$ witness that (3)
holds for $\varphi$, so $M\models\varphi(\vec x)$. 

We are left with the existential case. Assume first that $M\models\exists v_k\varphi(\vec x)$. Take $y\in M$ such that $M\models\varphi(\vec x,y)$ and 
let $\vec\alpha\in\kappa^k$ and $q\leq_\PP p$ with $q\Vdash_\PP\forall i<k\,(\dot F(\check\alpha_i)=\check x_i)$. Let now 
$G$ be $\QQ$-generic with $q\in G$. By an easy density argument there must be $r\leq_\PP p$ and $\beta<\kappa$
with $r\in G$ and $r\Vdash_\PP\dot F(\check\beta)=\check y$. By induction, $r\Vdash_\QQ\varphi_{\vec\alpha,\beta}^*$.
In particular, $M[G]\models(\exists v_k\varphi)_{\vec\alpha}^*$. The converse follows in a similar way. 
\end{proof}

Let $\Fm_1$ denote the set of all G\"odel codes of $\L_\in$-formulae whose only free variable is $v_0$. 
As a consequence of Claim \ref{claim:truth}, the class
$$T=\{\langle\gbr{\varphi},x\rangle\mid\gbr\varphi\in\Fm_1\wedge x\in M\wedge\forall\alpha<\kappa\,\forall q\leq_\PP p\ q\Vdash_\PP\dot F(\check\alpha)=\check x\ra q\Vdash_\QQ\varphi_\alpha^*\}$$
defines a first-order truth predicate for $M$, contradicting our assumptions on $\MM$. 
\end{proof}

\begin{corollary}\label{cor:pretame densely ft}
Suppose that $\MM=\langle M,\C\rangle$ is a countable transitive model of $\GBC^-$ such that $\C$ does not contain a first-order truth predicate for $M$. 
Then a notion of class forcing $\PP$ for $\MM$ is pretame if and only if it densely satisfies the forcing theorem. \hfill\qedsymbol
\end{corollary}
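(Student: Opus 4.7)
The plan is to prove the two directions separately, with the backward direction being essentially a restatement of Theorem \ref{thm:failure ft}.

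For the backward direction, I would argue by contraposition. Suppose $\PP$ is not pretame. Then Theorem \ref{thm:failure ft} directly produces a notion of class forcing $\QQ$ for $\MM$ together with a dense embedding $\pi\colon\PP\to\QQ$ in $\C$ such that $\QQ$ fails the forcing theorem, witnessing that $\PP$ does not densely satisfy the forcing theorem. This uses the assumption that $\C$ contains no first-order truth predicate for $M$.

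For the forward direction, suppose $\PP$ is pretame, and let $\pi\colon\PP\to\QQ$ be a dense embedding in $\C$, with $\QQ$ a notion of class forcing for $\MM$. By Theorem \ref{thm:pretame ft} (Stanley), it suffices to show that $\QQ$ is pretame. So fix $q\in\QQ$ and a sequence $\langle D_i\mid i\in I\rangle\in\C$ of dense subclasses of $\QQ$ with $I\in M$. Using density of $\pi[\PP]$ in $\QQ$, choose $p\in\PP$ with $\pi(p)\leq_\QQ q$. For each $i\in I$, set
\[E_i=\{p'\in\PP\mid\exists d\in D_i\ \pi(p')\leq_\QQ d\}\in\C.\]
Density of $D_i$ in $\QQ$ together with density of $\pi[\PP]$ in $\QQ$ and the fact that (by separativity) $\pi$ reflects the order shows that each $E_i$ is dense in $\PP$. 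Applying pretameness of $\PP$ to $\langle E_i\mid i\in I\rangle$ and $p$ yields $p^*\leq_\PP p$ and a sequence $\langle e_i\mid i\in I\rangle\in M$ such that each $e_i\subseteq E_i$ is predense below $p^*$. Using Collection in $M$, pick for each $e\in e_i$ some $d_e\in D_i$ with $\pi(e)\leq_\QQ d_e$, and let $d_i=\{d_e\mid e\in e_i\}\in M$.

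It remains to verify that each $d_i$ is predense below $\pi(p^*)$ in $\QQ$. Given $q'\leq_\QQ\pi(p^*)$, density of $\pi[\PP]$ provides $p'\in\PP$ with $\pi(p')\leq_\QQ q'$; then $\pi(p')\leq_\QQ\pi(p^*)$, so $p'\leq_\PP p^*$. By predensity of $e_i$ below $p^*$, there is $e\in e_i$ compatible with $p'$, say $p''\leq_\PP p',e$. Then $\pi(p'')\leq_\QQ q'$ and $\pi(p'')\leq_\QQ\pi(e)\leq_\QQ d_e$, showing $q'$ and $d_e$ are compatible. Hence $\QQ$ is pretame, and Stanley's theorem finishes the argument.

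The only nontrivial step is the preservation of pretameness under dense embeddings in $\C$, which is essentially standard but requires some care with the separativity assumption to ensure that $\pi$ reflects the order and that the auxiliary dense classes $E_i$ and the predense sets $d_i$ live in the correct collections ($\C$ and $M$ respectively).
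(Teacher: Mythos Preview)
Your proposal is correct and follows the same approach as the paper. The paper treats this corollary as immediate from Theorem~\ref{thm:pretame ft} and Theorem~\ref{thm:failure ft} (it carries only a \qedsymbol), tacitly using the fact that pretameness is inherited along dense embeddings in $\C$; you have simply spelled out that routine verification, which the paper later also invokes without proof in the argument for Theorem~\ref{thm:pretame nice}.
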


Furthermore, the proof of Theorem \ref{thm:failure ft} yields the following, with $\PP=\Col(\omega,\On)^M$ as a witness. 

\begin{corollary}
Suppose that $\MM=\langle M,\C\rangle$ is a countable transitive model of $\GBC^-$ such that $\C$ does not contain a first-order truth predicate for $M$. Then there is a notion of class forcing $\PP$ for $\MM$ which satisfies the forcing theorem, such that there is an $\omega$-sequence $S$ of subclasses of $\PP$, for which $\PP_S$ does not satisfy the forcing theorem. \hfill\qedsymbol 
\end{corollary}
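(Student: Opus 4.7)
The plan is to take $\PP=\Col(\omega,\On)^M$. By the Example in the introduction, $\PP$ satisfies the forcing theorem and is non-pretame. Moreover, any $\PP$-generic filter $G$ directly produces a surjection $F=\bigcup G\colon\omega\to\On^M$: given $q\in\PP$ and $\alpha\in\On^M$, choosing $n\in\omega\setminus\dom(q)$ and extending $q$ by $q(n)=\alpha$ yields a stronger condition forcing $\alpha\in\ran(\dot F)$. Hence, letting $\dot F$ be the canonical class name for $\bigcup\dot G$, we already have $\one_\PP\Vdash_\PP\anf{\dot F\colon\check\omega\to\check M\text{ is surjective}}$, which bypasses the appeal to Lemma \ref{lemma:pretame cofinal function} and Lemma \ref{lem:surjection onto Ord from cofinal function} that opened the proof of Theorem \ref{thm:failure ft}.

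With this $\dot F$ in hand, I would follow the proof of Theorem \ref{thm:failure ft} verbatim, setting
\[ D_{m,n}=\{q\in\PP\mid q\Vdash_\PP\dot F(\check m)\in\dot F(\check n)\}, \qquad X=\{\langle m,n\rangle\in\omega\times\omega\mid D_{m,n}\neq\emptyset\}, \]
and $S=\langle D_{m,n}\mid\langle m,n\rangle\in X\rangle$. Note that $X\in M$ (in fact $X=\{\langle m,n\rangle\in\omega\times\omega\mid m\neq n\}$, witnessed by the condition assigning $0$ to $m$ and $1$ to $n$). Since $X\subseteq\omega\times\omega$, after composing with a bijection between $X$ and $\omega$ in $M$ we may regard $S$ as an $\omega$-sequence of subclasses of $\PP$. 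The resulting forcing $\PP_S$ is then exactly the notion $\QQ$ constructed in the proof of Theorem \ref{thm:failure ft} (with $p_{m,n}$ there corresponding to $\sup D_{m,n}$ here).

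The remainder of the argument is identical to that proof. If $\PP_S$ were to satisfy the forcing theorem, then the name $\dot E=\{\langle\op(\check m,\check n),\sup D_{m,n}\rangle\mid\langle m,n\rangle\in X\}\in M^{\PP_S}$, together with the translation of $\L_\in$-formulae into infinitary quantifier-free formulae in the forcing language of $\PP_S$ and the argument of Claim \ref{claim:truth}, would define a first-order truth predicate for $M$ as an element of $\C$, contradicting the hypothesis on $\MM$. The one thing to verify is that nothing in the proof of Theorem \ref{thm:failure ft} exploits uncountability of $\kappa$; inspection shows $\kappa$ is used solely as an index set, so the argument transfers with $\kappa=\omega$ without any modification. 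This step, essentially routine, is the only real point that must be checked.
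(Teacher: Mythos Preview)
Your approach is essentially the paper's own: the paper simply points to the proof of Theorem \ref{thm:failure ft} with $\PP=\Col(\omega,\On)^M$ as the witness, and you have spelled this out in detail, including the observation that $\kappa=\omega$ so that $S$ is genuinely an $\omega$-sequence. One small slip to repair: $\bigcup G$ is a surjection $\omega\to\On^M$, not onto $M$, so before running the argument of Theorem \ref{thm:failure ft} you must compose with a bijection $\On^M\to M$ in $\C$ (available from the set-like well-order guaranteed by $\GBC^-$); otherwise $\dot E$ would code the ordinal $<$-relation rather than the $\in$-relation on $M$, and Claim \ref{claim:truth} would not go through.
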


Given a notion of class forcing $\PP$, a condition $p\in\PP$, and a first order formula $\varphi$, it is easy to see that (as in the case of set forcing) $p\Vdash_\PP\exists x\varphi(x)$ if and only if the class of all $q\leq_\PP p$ such that there is a $\PP$-name $\sigma$ with $q\Vdash_\PP\varphi(\sigma)$ is dense below $p$. The maximality principle states that it is not necessary to strengthen $p$ in order to obtain a witnessing name for an existential formula. While this is a valid principle for set forcing, we observe that for notions of class forcing which satisfy the forcing theorem, this principle is equivalent to the $\On$-cc over models of $\GBC$.

\begin{definition}\label{def:mp}
A notion of class forcing $\PP$ for $\MM$ which satisfies the forcing theorem is said to satisfy the \emph{maximality principle over $\MM$} if whenever
$p\Vdash_\PP\exists x\varphi(x,\vec\sigma,\vec\Gamma)$ for some $p\in\PP$, some $\L_\in$-formula $\varphi(v_0,\dots, v_m,\vec\Gamma)$ with class name parameters $\vec\Gamma\in(\C^\PP)^n$, and $\vec\sigma$ in $(M^\PP)^m$, then there
is $\tau\in M^\PP$ such that $p\Vdash_\PP\varphi(\tau,\vec\sigma,\vec\Gamma)$.
\end{definition}

\begin{lemma}
Assume that $\MM$ is a model of $\GBC$ and let $\PP$ be a notion of class forcing for $\MM$ which satisfies
the forcing theorem. Then $\PP$ satisfies the maximality principle if and only if it satisfies the $\On$-cc over $\MM$.
\end{lemma}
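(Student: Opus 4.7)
The plan is to argue both directions of the equivalence. For the implication from the $\On$-cc to the maximality principle, I would use a standard mixing argument. Suppose $p \Vdash_\PP \exists x\,\varphi(x,\vec\sigma,\vec\Gamma)$ and let
$$W=\{q \leq_\PP p \mid \exists \sigma \in M^\PP\ q \Vdash_\PP \varphi(\sigma, \vec\sigma, \vec\Gamma)\}.$$
By the truth lemma, $W$ is dense below $p$, and by the definability lemma, $W \in \C$. Using global choice, apply Zorn's lemma to extract a maximal antichain $A \subseteq W$; by the $\On$-cc, $A \in M$. Using the global wellorder, select for each $q \in A$ the least $\sigma_q \in M^\PP$ with $q \Vdash_\PP \varphi(\sigma_q, \vec\sigma, \vec\Gamma)$, so that by Replacement applied to the set $A$ the map $q \mapsto \sigma_q$ is an element of $M$. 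Now define the mixed name
$$\tau = \{\langle \rho, r\rangle \mid \exists q \in A\ (r \leq_\PP q\,\wedge\,\exists s\,(\langle\rho,s\rangle \in \sigma_q \wedge r \leq_\PP s))\} \in M^\PP.$$
For any generic $G$ with $p \in G$ there is, by maximality of $A$ below $p$ together with the antichain property, a unique $q \in A \cap G$, and a routine calculation gives $\tau^G = \sigma_q^G$; hence $p \Vdash_\PP \varphi(\tau, \vec\sigma, \vec\Gamma)$.

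For the converse, assume MP and let $A \in \C$ be an antichain of $\PP$. Using global choice, extend $A$ to a maximal antichain $\bar A \in \C$ and form the class name $\dot A = \{\langle \check p, p\rangle \mid p \in \bar A\} \in \C^\PP$. For any generic $G$ one has $\dot A^G = \bar A \cap G \neq \emptyset$ by maximality of $\bar A$, so $\one_\PP \Vdash_\PP \exists x\,(x \in \dot A)$. By MP there exists $\tau \in M^\PP$ with $\one_\PP \Vdash_\PP \tau \in \dot A$. For each $a \in \bar A$, using the countability of $\MM$, choose a $\PP$-generic filter $G_a$ containing $a$; then $\tau^{G_a}$ and $a$ both lie in $\bar A \cap G_a$, and since $\bar A$ is an antichain and $G_a$ is a filter, $\bar A \cap G_a$ is a singleton, whence $\tau^{G_a} = a$. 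A standard induction shows $\rnk(\tau^G) \leq \rnk(\tau)$ for every generic $G$, so every $a \in \bar A$ has set-theoretic rank at most $\rnk(\tau) \in \On^M$. Thus $\bar A$ is a subclass of a set in $M$ (using the power set axiom of $\GBC$), hence $\bar A \in M$ by Separation, from which $A \in M$ follows.

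The main obstacle is ensuring each direction stays within the base theory $\GBC$. In the forward direction, one needs the definability lemma to place $W$ in $\C$, Replacement together with global choice to turn the class function $q \mapsto \sigma_q$ into a set element of $M$, and a careful but routine verification of the mixing computation. In the backward direction, the delicate step is recognising that MP as formulated in Definition \ref{def:mp} permits the class name parameter $\dot A$, and then exploiting the rank bound $\rnk(\tau^G) \leq \rnk(\tau)$ together with the power set axiom to collapse the a priori proper class $\bar A$ to a set in $M$.
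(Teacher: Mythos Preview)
Your argument for the direction from the maximality principle to the $\On$-cc is correct and essentially identical to the paper's: both extend the given antichain to a maximal one, apply MP to the statement that the generic meets it, and then use the rank bound $\rnk(\tau^G)\leq\rnk(\tau)$ together with the power set axiom to conclude that the maximal antichain is bounded in the cumulative hierarchy and hence a set.

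There is, however, a genuine gap in your proof of the converse direction. Your mixed name
\[
\tau = \{\langle \rho, r\rangle \mid \exists q \in A\ (r \leq_\PP q\ \wedge\ \exists s\,(\langle\rho,s\rangle \in \sigma_q \wedge r \leq_\PP s))\}
\]
need not lie in $M^\PP$: while the possible first coordinates $\rho$ range over the set $\bigcup_{q\in A}\dom{\sigma_q}$, for each fixed pair $(q,s)$ the class $\{r\in\PP\mid r\leq_\PP q\ \wedge\ r\leq_\PP s\}$ may be a proper class of $\MM$ (think of $\Col(\omega,\On)^M$, where every condition has class-many extensions). So $\tau$ as you have written it is in general only a class name, and the argument breaks down precisely at the point where you assert $\tau\in M^\PP$.

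The paper's proof repairs exactly this defect by invoking the $\On$-cc a second time: for each $q\in A$ and each $\mu\in\dom{\tau_q}$ it chooses a \emph{maximal antichain} $A_\mu^q$ inside $\{r\leq_\PP q\mid\exists s\,(\langle\mu,s\rangle\in\tau_q\wedge r\leq_\PP s)\}$, and then sets
\[
\sigma=\{\langle\mu,r\rangle\mid\exists q\in A\ (\mu\in\dom{\tau_q}\wedge r\in A_\mu^q)\}.
\]
Since each $A_\mu^q$ is a set by the $\On$-cc and there are only set-many pairs $(q,\mu)$, this $\sigma$ is genuinely an element of $M^\PP$. Your argument becomes correct once you make this modification.
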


\begin{proof}
Suppose first that $\PP$ satisfies the maximality principle and let $A\in\C$ be an antichain of $\PP$.
Since $\C$ contains a well-ordering of $M$, we can extend $A$ to a maximal antichain $A'\in\C$. It is enough
to show that $A'\in M$. Clearly, $\one_\PP\Vdash_\PP\exists x\,(x\in\check {A'}\cap\dot G)$. 
Using the maximality principle, we obtain $\sigma\in M^\PP$ such that $\one_\PP\Vdash_\PP\sigma\in\check {A'}\cap \dot G$.
But then since $\rnk(\sigma^G)\leq\rnk(\sigma)$ for every $\PP$-generic filter $G$,
${A'}\subseteq\PP\cap(\V_\alpha)^M$ for $\alpha=\rnk(\sigma)$ and so ${A'}\in M$.

Conversely, assume that $\PP$ satisfies the $\On$-cc over $\MM$ and let $p\Vdash_\PP\exists x\varphi(x,\vec\sigma,\vec\Gamma)$. 
Using the global well-order in $\C$ we can find an antichain $A\in\C$ which is maximal in 
$\{q\leq_\PP p\mid\exists\sigma\in M^\PP\ q\Vdash_\PP\varphi(\sigma,\vec\sigma,\vec\Gamma)\}\in\C$. Note that $\sup A=p$ and that $A\in M$ by assumption.
For every $q\in A$, choose a name $\tau_q\in M^\PP$ such that $q\Vdash_\PP\varphi(\tau_q,\vec\sigma,\vec\Gamma)$.
Furthermore, for every $\mu\in\dom{\tau_q}$, let $A_\mu^q$ be a maximal antichain in 
$\{r\leq_\PP q\mid\exists s\ \langle\mu,s\rangle\in\tau_q\wedge r\leq_\PP s\}$.
Now put $$\sigma=\{\langle\mu,r\rangle\mid \exists q\in A\ \mu\in\dom{\tau_q}\wedge r\in A_\mu^q\}.$$
By construction, $q\Vdash_\PP\sigma=\tau_q$ for every $q\in A$ and so $p\Vdash_\PP\varphi(\sigma)$.
\end{proof}

\section{Preservation of axioms}\label{sec:axioms}

The following theorem adapts \cite[Proposition 2.17 and Lemma 2.19]{MR1780138} to our generalized setting. 

\begin{theorem}[Stanley]\label{thm:pretame axioms}
Let $\MM=\langle M,\C\rangle$ be a countable transitive model of $\GB^-$. Then the following statements hold for every notion of class forcing $\PP$ for $\MM$.
\begin{enumerate-(1)}
 \item If $\PP$ is pretame and $G$ is $\PP$-generic over $\MM$, then $\MM[G]$ satisfies $\GB^-$. Moreover, if $\MM$ is a model of $\GBC^-$, then so is $\MM[G]$, and if $\MM$ has a hierarchy, then so does $\MM[G]$. 
 \item Suppose that $\MM$ has a hierarchy, and that for every $\PP$-generic filter $G$, either Replacement or Collection holds in $\MM[G]$. Then $\PP$ is pretame. 
\end{enumerate-(1)}
\end{theorem}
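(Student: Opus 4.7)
My plan for Part (1) is to invoke Theorem \ref{thm:pretame ft} to promote pretameness to the forcing theorem, and then verify the axioms of $\GB^-$ in $\MM[G]$ one by one. The elementary axioms (Extensionality, Foundation, Pairing, Union, Infinity) are routine given $M\subseteq M[G]$ and the standard name calculus, and predicative class comprehension in $\C[G]$ is immediate from Observation \ref{defsubsetclasses}. The substantive cases are Separation and Collection. For Collection, given $p\Vdash_\PP\forall x\in\sigma\,\exists y\,\varphi(x,y,\vec\Gamma)$, I will observe that for each $\langle\tau,s\rangle\in\sigma$ the class
\[ E_{\tau,s}=\{r\leq_\PP p\mid \exists\rho\in M^\PP\ r\Vdash_\PP\varphi(\tau,\rho,\vec\Gamma)\} \]
is dense below $p\wedge s$, and then apply the formulation of pretameness in Remark \ref{rem:pretamewedge} to the sequence $\langle E_{\tau,s}\mid\langle\tau,s\rangle\in\sigma\rangle$, indexed by the set $\sigma\in M$, to obtain $q\leq_\PP p$ and $\langle d_{\tau,s}\rangle\in M$ with $d_{\tau,s}\subseteq E_{\tau,s}$ predense below $q\wedge s$. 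Selecting a witnessing name $\rho$ for each $r\in d_{\tau,s}$ by Collection inside $M$ produces a set $R\in M$, and then $\mu=\{\langle\rho,\one_\PP\rangle\mid\rho\in R\}\in M^\PP$ is a name for a set collecting all required witnesses in any generic extension containing $q$. Separation will be parallel but easier, using the dense classes of conditions deciding $\varphi(\nu,\vec\Gamma)$ for $\nu\in\dom\sigma$ and extracting a set name from the predense sets provided by pretameness. Preservation of a hierarchy and of a set-like well-order in $\C[G]$ then follows by evaluating natural class names extending the ground-model witnesses.

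For Part (2), I will argue the contrapositive. Fix a witness to the failure of pretameness consisting of $p\in\PP$ and $\langle D_i\mid i\in I\rangle\in\C$ with $I\in M$ admitting no $q\leq_\PP p$ and $\langle d_i\rangle\in M$ with $d_i\subseteq D_i$ predense below $q$; also fix a hierarchy $\langle C_\alpha\mid\alpha\in\On^M\rangle\in\C$. The key observation is that for every $q\leq_\PP p$ and every $\beta\in\On^M$ there exists $i\in I$ with $D_i\cap C_\beta$ not predense below $q$: otherwise the sequence $\langle D_i\cap C_\beta\mid i\in I\rangle$, which lies in $M$ by $\GB^-$-Separation applied to the class $\langle D_i\rangle\in\C$ and the set $I\times C_\beta\in M$, would witness pretameness at $q$ with the original sequence.

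Using the countability of $\MM$ in $\V$, I will enumerate the dense subclasses of $\PP$ in $\C$ as $\langle E_n\mid n\in\omega\rangle$ and fix a cofinal $\omega$-sequence $\langle\beta_n\mid n\in\omega\rangle$ in $\On^M$, and then build a descending chain $p=q_0\geq_\PP q_1\geq_\PP\cdots$ by, at stage $n$, first extending $q_n$ into $E_n$ to some $r_n$, then invoking the key observation to find $i_n\in I$ and $s_n\leq_\PP r_n$ with $s_n$ incompatible with every element of $D_{i_n}\cap C_{\beta_n}$, and finally selecting $q_{n+1}\leq_\PP s_n$ in $D_{i_n}$. Since $q_{n+1}$ is compatible with itself but incompatible with every element of $D_{i_n}\cap C_{\beta_n}$, necessarily $q_{n+1}\notin C_{\beta_n}$. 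The filter $G$ generated by $\{q_n\mid n\in\omega\}$ is then $\PP$-generic over $\MM$ and contains $p$, and the filter property combined with the incompatibility of $q_{n+1}$ with $D_{i_n}\cap C_{\beta_n}$ forces $G\cap D_{i_n}\cap C_{\beta_n}=\emptyset$. Hence the function $F:I\to\On^M$ defined by $F(i)=\min\{\alpha\mid G\cap D_i\cap C_\alpha\neq\emptyset\}$, which lies in $\C[G]$ by predicative class comprehension from the class parameters $G,\langle D_i\rangle$ and $\langle C_\alpha\rangle$, satisfies $F(i_n)\geq\beta_n$ for every $n$, so its range is unbounded in $\On^M$ -- contradicting both Replacement and Collection in $\MM[G]$. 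The main technical delicacy lies in the construction of $G$, namely the interleaving of the density and unboundedness requirements into a single $\omega$-length recursion; this is afforded by the countability of $\MM$ in $\V$ together with $\cof^\V(\On^M)=\omega$.
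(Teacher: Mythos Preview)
Your argument for Part (1) is essentially the paper's own proof: invoke Theorem \ref{thm:pretame ft}, then verify Collection and Separation by applying pretameness to the natural dense classes indexed by $\sigma$, with the remaining axioms handled routinely. One small remark: the paper derives Union from Separation rather than treating it as ``routine'', since a naive name for $\bigcup\sigma^G$ may be too large; but this is easily patched along the lines you indicate for Separation.

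Your argument for Part (2) is correct but takes a genuinely different route from the paper. You construct a \emph{specific} generic filter $G$ by an external $\omega$-recursion, interleaving the genericity requirements with steps that force the rank function $F$ to be unbounded. The paper instead runs a density argument valid for \emph{every} generic $G$ containing $p$: assuming $\ran F$ is bounded by some $\gamma$, the class
\[
D=\{q\leq_\PP p\mid\exists i\in I\ \forall r\in D_i\cap C_\gamma\ (q\perp_\PP r)\}
\]
is dense below $p$ (precisely by your ``key observation''), so $G\cap D\ne\emptyset$ yields $i$ with $F(i)>\gamma$. Thus the paper obtains the stronger conclusion that Replacement and Collection fail in every $\PP$-generic extension through $p$, with a shorter internal argument; your construction yields only the existential statement required by the contrapositive, at the cost of a hand-built filter. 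Both rest on the same combinatorial core.

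One caveat: your justification that $F\in\C[G]$ ``by predicative class comprehension'' is not quite legitimate, since class comprehension in $\MM[G]$ is among the axioms in question and is not assumed in Part (2). The conclusion is nonetheless correct: one can write down an explicit class name $\dot F\in\C^\PP$ for $F$ using only data in $\C$ (pairs $\langle\widecheck{\langle i,\alpha\rangle},r\rangle$ with $r$ below some element of $D_i\cap C_\alpha$ and incompatible with every element of $D_i\cap\bigcup_{\beta<\alpha}C_\beta$). The paper glosses over the same point.
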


\begin{proof}
For (1), suppose that $\PP$ is pretame and that $G$ is $\PP$-generic over $\MM$. Note that by Theorem \ref{thm:pretame ft}, $\PP$ satisfies the forcing theorem. It is easy to check that $\MM[G]$ satisfies all set axioms of $\GB^-$ except possibly for Separation, Collection and Union. Moreover, Collection together with Separation implies Replacement, and the preservation of Separation can easily be seen to imply the preservation of Union. 

To see that $\MM[G]$ satisfies Collection, assume that $\MM[G]\models\forall x\in\sigma^G\,\exists y\ \varphi(x,y,\Gamma^G)$, where 
$\sigma\in M^\PP$, $\Gamma\in\C^\PP$ and $\varphi$ is an $\L_\in$-formula with one class parameter.
By the truth lemma there is $p\in G$ such that $p\Vdash_\PP\forall x\in\sigma\,\exists y\,\varphi(x,y,\Gamma)$.
For each $\langle\pi,r\rangle\in\sigma$, the class
$$D_{\pi,r}=\{s\in\PP\mid [s\leq_\PP p,r\wedge\exists\mu\in M^\PP\,(s\Vdash_\PP\varphi(\pi,\mu,\Gamma))]\vee s\bot_\PP r\}\in\C$$ is dense below $p$ in $\PP$. By pretameness there is $q\in G$ which strengthens $p$ and there are sets $d_{\pi,r}\subseteq D_{\pi,r}$ for each $\langle\pi,r\rangle\in\sigma$ such that each $d_{\pi,r}\in M$ is predense below $q$. 
Using Collection in $\MM$, there is a set $x\in M$ such that for each $\langle\pi,r\rangle\in\sigma$ and for each $s\in d_{\pi,r}$ that is compatible with $r$ there is $\mu\in x$ such that $s\Vdash_\PP\varphi(\pi,\mu,\Gamma)$. Now put
$$\tau=\{\langle\mu,s\rangle\mid\mu\in x\wedge\exists\langle\pi,r\rangle\in\sigma\,(s\in d_{\pi,r}\wedge s\Vdash_\PP\varphi(\pi,\mu,\Gamma))\}.$$
By construction, $\MM[G]\models\forall x\in\sigma^G\,\exists y\in\tau^G\,\varphi(x,y,\Gamma^G)$.

Next we turn to Separation. Let $\sigma\in M^\PP,\Gamma\in\C^\PP$ and $\varphi$ an $\L_\in$-formula with one class parameter. We need to find a set name for $\{x\in\sigma^G\mid\varphi(x,\Gamma^G)\}$. For each $\langle\pi,r\rangle\in\sigma$ let 
$$D_{\pi,r}=\{q\leq_\PP r\mid q\Vdash_\PP\varphi(\pi,\Gamma)\}\in\C.$$
By pretameness we can take $p\in G$ and a sequence $\langle d_{\pi,r}\mid\langle\pi,r\rangle\in\sigma\rangle\in M$ such that each $d_{\pi,r}\subseteq D_{\pi,r}$ is predense below $p$. Then 
$$\tau=\{\langle\pi,q\rangle\mid\exists r\,(\langle\pi,r\rangle\in\sigma\wedge q\in d_{\pi,r}\wedge q\Vdash_\PP\varphi(\pi,\Gamma))\}$$
names the set $\{x\in\sigma^G\mid\varphi(x,\Gamma^G)\}$.

The class axioms of extensionality and foundation are trivial consequences of elements of $\C[G]$ being subsets of $M[G]$, and of $\C$ being an element of $\V\models\ZFC$.
To see that $\MM[G]$ satisfies first-order class comprehension, note that $$\Gamma=\{\langle\sigma,p\rangle\mid p\Vdash_\PP\varphi(\sigma,\Gamma_0,\dots,\Gamma_{n-1})\}\in\C^\PP$$ is a class name for the class $\{x\mid\varphi(x,\Gamma_0^G,\dots,\Gamma_{n-1}^G)\}$.

If $\prec$ is a set-like well-order of $M$ in $\C$ then $$x\vartriangleleft y\Llr\exists\sigma\in M^\PP[x=\sigma^G\wedge\forall\tau\in M^\PP(y=\tau^G\ra\sigma\prec\tau)]$$
defines a set-like well-order of $M[G]$ in $\C[G]$.
Finally, if $\langle C_\alpha\mid\alpha\in\On^M\rangle$ is a hierarchy for $\MM$, we can define a hierarchy
 $\langle D_\alpha\mid \alpha\in\On^M\rangle$ in $\C[G]$ by
 $$D_\alpha=\{x\in\MM[G]\mid\exists\sigma\in M^\PP\cap C_\alpha\ \sigma^G=x\}=\{\langle\sigma,\one\rangle\mid\sigma\in C_\alpha\}^G\in M[G]$$
for every $\alpha\in\On^M$.

Now we turn to (2). Suppose that $\langle C_\alpha\mid\alpha\in\On^M\rangle$ witnesses that $\MM$ has a hierarchy. Assume, towards a contradiction, that $\langle D_i\mid i\in I\rangle$ is a sequence of dense classes 
and $p$ is a condition in $\PP$ which witness that pretameness fails. By our assumption there is a $\PP$-generic filter $G$ containing 
$p$ such that $\MM[G]$ satisfies either Replacement or Collection. Now consider the function
$$F:I\ra\On^M,F(i)=\min\{\alpha\in\On^M\mid G\cap D_i\cap C_\alpha\neq\emptyset\}.$$
By assumption, there is a set $x\in M[G]$ with $\ran F\subseteq x$. 
Let $\gamma$ be the supremum of all ordinals in $x$ and let
$$D=\{q\leq_\PP p\mid\exists i\in I\,\forall r\in D_i\cap C_\gamma\,(q\bot_\PP r)\}.$$
By assumption, $D$ is dense below $p$. Pick $q\in G\cap D$ and let $i\in I$ such that $q$ is incompatible with all elements of $D_i\cap C_\gamma$. But then $F(i)>\gamma$, a contradiction.
\end{proof}

Our next result provides a partial answer to \cite[Question 10.1]{ClassForcing}, that is, over countable transitive models of $\GBC^-$, we show that Separation implies Collection, and hence also Replacement, in generic extensions for notions of class forcing that satisfy the forcing theorem. This will be an easy consequence of the following.

\begin{lemma}\label{sepreplemma}
  Suppose that $\MM=\langle M,\C\rangle$ is a countable transitive model of $\GBC^-$. Let $\PP\in\C$ be a notion of class forcing which satisfies the forcing theorem, and let $G$ be $\PP$-generic over $\MM$. 
Suppose that $\C[G]$ contains a cofinal function $F\colon\lambda\rightarrow \On^M$, for some $\lambda\in \On^M$. Then Separation fails in $\MM[G]$.
\end{lemma}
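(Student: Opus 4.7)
The plan is to argue by contradiction: suppose that Separation holds in $\MM[G]$, and derive a contradiction from the existence of the cofinal function $F\colon\lambda\to\On^M$ in $\C[G]$. The first step is to pass, without loss of generality, to a surjection: fixing a name $\dot F\in\C^\PP$ for $F$ and a condition $p\in G$ with $p\Vdash_\PP\anf{\dot F\text{ is cofinal}}$, Lemma \ref{lem:surjection onto Ord from cofinal function} (together with the forcing theorem) produces a class name $\dot E\in\C$ and a condition $q\in G$ with $q\leq_\PP p$ such that $q\Vdash_\PP\anf{\dot E\colon\check\kappa\to\On^M\text{ is surjective}}$, where $\kappa=|\lambda|^M$. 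Composing with the bijection $g\colon\On^M\to M$ arising from the set-like well-order of $M$ provided by $\GBC^-$, we obtain a surjective class function $h=g\circ\dot E^G\colon\kappa\to M$ in $\C[G]$.

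The next step is to exploit Separation to extract a set-sized faithful encoding of $\langle M,\in\rangle$ inside $M[G]$. Consider the relation
$\hat E=\{(\alpha,\beta)\in\kappa\times\kappa\mid h(\alpha)\in h(\beta)\}$,
which is first-order definable from the class parameter $h$, hence belongs to $\C[G]$ by Observation \ref{defsubsetclasses}. Since $\hat E$ is a subclass of the set $\kappa\times\kappa\in M[G]$, the assumption that Separation holds in $\MM[G]$ yields $\hat E\in M[G]$. Consequently $\langle\kappa,\hat E\rangle$ is a set-sized, well-founded, extensional structure of $M[G]$, isomorphic to all of $\langle M,\in\rangle$ via $h$.

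From this, I plan to derive the contradiction via a Mostowski-collapse-style construction carried out inside $M[G]$: the collapse $\pi\colon\kappa\to V^{M[G]}$ of $\langle\kappa,\hat E\rangle$ is defined by $\hat E$-recursion, and by Mostowski uniqueness it must agree with $h$, so its image is $M$. Since $\hat E\in M[G]$ is a set and the construction happens inside $M[G]$, this realizes $M$ itself as a transitive set of $M[G]$, contradicting the fact that $\On^M=\On^{M[G]}$ is a proper class of $\MM[G]$ and hence $M\supseteq\On^M$ cannot be a set of $M[G]$.

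The main obstacle I expect is the legitimacy of performing the Mostowski collapse in $M[G]$ from Separation alone, without invoking Replacement as one normally would. My strategy to handle this uses the additional structure from $\GBC^-$: the set-like well-order $\prec$ is in $\C[G]$, and the rank function $\rho\colon\kappa\to\On^{M[G]}$ of the set-sized well-founded relation $\hat E$ is a definable function on the set $\kappa$ whose range one argues is bounded by exploiting $\prec$. Stratifying the recursion by $\rho$-levels, each stage involves only set-many new values that can be collected using Separation (with $\hat E$, $h$, and $\prec$ as class parameters), and the levels can then be glued into a single set-valued collapse function. It is essential that $\MM\models\GBC^-$: without the hierarchy and set-like well-order from $\C$, one cannot reliably realize the intermediate stages of the collapse as sets of $M[G]$.
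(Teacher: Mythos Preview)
Your approach is genuinely different from the paper's and has a real gap at the Mostowski step.

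The paper does not argue by contradiction. It directly exhibits a subclass of $\kappa$ that has no set name: using the set-like well-order, it builds an enumeration $\langle C_\gamma\mid\gamma\in\On^M\rangle$ whose terms are the proper classes $A_{p,\alpha}=\{\beta\mid\exists q\leq_\PP p\,[q\Vdash_\PP\dot F(\check\alpha)=\check\beta]\}$ and $B_{p,\alpha,\tau}=\{\beta\mid\exists q\leq_\PP p\,[q\Vdash_\PP(\dot F(\check\alpha)=\check\beta\wedge\check\alpha\in\tau)]\}$, each listed unboundedly often; then a straightforward alternating recursion yields $D\in\C$ with $C_\gamma\cap D$ and $C_\gamma\setminus D$ both proper classes for every $\gamma$. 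Letting $\Gamma$ name $\{\alpha<\kappa\mid F(\alpha)\in D\}$, any putative $p\in G$ and $\tau\in M^\PP$ with $p\Vdash_\PP\Gamma=\tau$ would force $B_{p,\alpha,\tau}=A_{p,\alpha}\cap D$ for some $\alpha$ with $A_{p,\alpha}$ a proper class, contradicting $B_{p,\alpha,\tau}\setminus D=\emptyset$. This diagonalization uses only Separation in $\MM$ and the forcing theorem; it never touches recursion in $\MM[G]$.

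Your argument, by contrast, relies on carrying out a Mostowski collapse of $\langle\kappa,\hat E\rangle$ \emph{inside} $M[G]$, and this is precisely where it fails. The $\hat E$-rank of $\alpha$ equals the $\in$-rank of $h(\alpha)$; since $h$ is surjective onto $M$, the rank function $\rho$ has range all of $\On^M$. So the range of $\rho$ is \emph{not} bounded, and your proposed stratification into set-many levels does not apply: there are $\On^M$-many levels, and gluing them into a single set function is an instance of Replacement, not of Separation. In fact the very existence of $\rho$ (or of the partial collapse functions on $\hat E$-transitive closures) as sets in $M[G]$ already requires set-recursion over a well-founded relation, which is a standard consequence of Replacement and is not derivable from Separation alone. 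Nothing about the set-like well-order $\prec\in\C$ remedies this: $\prec$ provides a hierarchy of $M$, but the levels of the collapse are indexed by $\On^M$ regardless. A minor additional point: $\langle\kappa,\hat E\rangle$ is not extensional, since $h$ need not be injective; this is easily fixed by quotienting, but the recursion obstacle remains.
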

\begin{proof}
Choose a set-like wellorder $\prec\,\in\C$ of $M$. We pick a class name $\dot F\in\C^\PP$ and $r\in G$ such that $\dot F^G=F$, and such that 
$$r\Vdash_\PP \dot{F}\colon\check\kappa\rightarrow \On^M\text{ is cofinal}.$$ 
As $r$ plays no role in the proof to come, we may assume that $r=\one_\PP$. 
Let $C\subseteq \On^M\times\On^M$ be an element of $\C$ such that each $C_{\gamma}=\{\delta\in \On^M\mid \langle\gamma,\delta\rangle\in C\}$ for $\gamma\in \On^M$ is either of the form 
$$A_{p,\alpha}=\{\beta\in\On^M\mid \exists q\leq_\PP p\,[ q\Vdash_{\mathbb{P}} \dot{F}(\check\alpha)=\check\beta]\}\in\C$$ 
for $p\in \mathbb{P}$ and $\alpha\in \On^M$ such that $A_{p,\alpha}$ is a proper class of $\MM$, or of the form
$$B_{p,\alpha,\tau}=\{\beta\in\On^M\mid\exists q\leq_\PP p\,[ q\Vdash_{\mathbb{P}}(\dot{F}(\check\alpha)=\check\beta\,\wedge\,\check\alpha\in \tau)]\}\in \C$$ 
for $p\in \mathbb{P}$, $\alpha\in \On^M$ and $\tau\in M^\PP$ such that $B_{p,\alpha,\tau}$ is a proper class of $\MM$, 
and moreover each such $A_{p,\alpha}$ and $B_{p,\alpha,\tau}$ appears unboundedly often in the enumeration $\langle C_\gamma\mid \gamma\in\On^M\rangle$.

\begin{claim*} 
There is $D\in\C$ such that both $C_\gamma\cap D$ and $C_\gamma\setminus D$ are proper classes of $\MM$ for all $\gamma\in \mathrm{Ord}^M$.  
\end{claim*} 
\begin{proof} 
This can be achieved by recursively defining $D_\gamma$ and $D'_\gamma$ in $M$ as follows. Let $D_0=D'_0=\emptyset$. 
Suppose that  $D_\gamma$ and $D'_\gamma$ have already been defined. 
Let $D_{\gamma+1}\in M$ be the set obtained from $D_\gamma$ by adding the $\prec$-least $x\in C_\gamma\setminus(D_\gamma\cup D'_\gamma)$. 
Let $D'_{\gamma+1}\in M$ be the set obtained from $D'_\gamma$ by adding the $\prec$-least $x\in C_\gamma\setminus(D_{\gamma+1}\cup D'_\gamma)$. 
If $\gamma$ is a limit ordinal, let $D_{\gamma}=\bigcup_{\delta<\gamma}D_\delta$ and $D'_\gamma=\bigcup_{\delta<\gamma}D'_\delta$. Let $D=\bigcup_{\gamma\in\On^M}D_\gamma\in\C$. 
Since each $C_\gamma$ appears unboundedly often in the enumeration, both $C_\gamma\cap D$ and $C_\gamma\setminus D$ are proper classes. 
\end{proof}
Suppose that $D\in\C$ is as provided by the claim. 
Since $\PP$ satisfies the forcing theorem, to show that Separation fails in $\MM[G]$, 
it suffices to find $\Gamma\in\C^{\PP}$ such that $\Gamma^G\in\E$ is a subset of some $s\in M[G]$ and such that there is no $p\in G$ and no $\tau\in M^\PP$ with $p\Vdash_\PP\Gamma=\tau$.
Let $\Gamma\in\C$ be a name for $\{\alpha<\kappa\mid F(\alpha)\in D\}$, which is an element of $\E$, as the latter is closed under first order definability.
The above set $s$ will be equal to $\kappa$. 
Suppose for a contradiction that $p\in G$ is such that $p\Vdash_{\mathbb{P}}\Gamma=\tau$ for some $\tau\in M^\PP$.
We first claim that there is an $\alpha<\kappa$ such that $A_{p,\alpha}$ is a proper class.
Assume that such an $\alpha$ does not exist. Then $p$ forces that the range of $\dot F$ is bounded in $\On^M$, contradicting our assumption on $\dot{F}$.
The above claim implies that $A_{p,\alpha}\cap D=\{\beta\mid\exists q\leq_\PP p\,[ q\Vdash_{\PP}(\dot F(\check\alpha)=\check\beta\,\land\,\check\alpha\in\tau)]\}=B_{p,\alpha,\tau}$ is a proper class.
Then $B_{p,\alpha,\tau}\setminus D$ is empty, contradicting the choice of $D$. 
\end{proof}

\begin{theorem}\label{thm:sep rep}
Suppose that $\MM=\langle M,\C \rangle$ is a countable transitive model of $\GBC^-$. 
Let $\PP\in\C$ be a notion of class forcing which satisfies the forcing theorem, and let $G$ be $\PP$-generic over $\MM$.
If $\MM[G]$ satisfies Separation, then $\MM[G]$ satisfies Collection, and hence also Replacement.
\end{theorem}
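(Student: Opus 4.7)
The plan is to prove the contrapositive: assuming that Collection fails in $\MM[G]$, I will build a cofinal function $F'\colon\lambda\to\On^M$ in $\C[G]$ for some $\lambda\in\On^M$, which by Lemma \ref{sepreplemma} contradicts the hypothesis that Separation holds in $\MM[G]$.

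First I would transfer the hierarchy of $\MM$ up to $\MM[G]$. Since $\GBC^-$ provides a set-like well-order, $\MM$ has a hierarchy $\langle C_\alpha\mid\alpha\in\On^M\rangle$, and setting $D_\alpha=\{\sigma^G\mid\sigma\in C_\alpha\cap M^\PP\}$ gives a hierarchy for $\MM[G]$: each $D_\alpha$ is named by the set $\{\langle\sigma,\one_\PP\rangle\mid\sigma\in C_\alpha\cap M^\PP\}\in M^\PP$, the whole sequence admits an obvious class name, and $\bigcup_\alpha D_\alpha=M[G]$.

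Next, suppose that $x\in M[G]$, a formula $\varphi$, and class parameters $\vec W\in\C[G]$ witness the failure of Collection, so that $\MM[G]\models\forall y\in x\,\exists z\,\varphi(y,z,\vec W)$ but no set in $M[G]$ contains witnesses for every $y\in x$. Define $F\colon x\to\On^M$ by
\[
F(y)=\min\{\alpha\in\On^M\mid\exists z\in D_\alpha\,\varphi(y,z,\vec W)\}.
\]
Because $\PP$ satisfies the forcing theorem, Observation \ref{defsubsetclasses} places $F$ in $\C[G]$. If the range of $F$ were bounded by some $\beta\in\On^M$, then $D_\beta\in M[G]$ would be a collecting set, contradicting our assumption; hence the range of $F$ must be cofinal in $\On^M$.

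To apply Lemma \ref{sepreplemma}, I need to replace the set-indexed $F$ by an ordinal-indexed cofinal function. Fixing a $\PP$-name $\sigma\in M^\PP$ for $x$ and using the set-like well-order from $\GBC^-$, I can enumerate the set $\{\tau\in M^\PP\mid\exists p\,\langle\tau,p\rangle\in\sigma\}\in M$ as $\{\tau_\alpha\mid\alpha<\lambda\}$ in $M$ for some $\lambda\in\On^M$. The map $\pi\colon\lambda\to M[G]$ given by $\pi(\alpha)=\tau_\alpha^G$ lies in $M[G]$ and surjects onto $x$, so setting $F'(\alpha)=F(\pi(\alpha))$ when $\pi(\alpha)\in x$, and $F'(\alpha)=0$ otherwise, defines a cofinal function $F'\colon\lambda\to\On^M$ in $\C[G]$. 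This contradicts Lemma \ref{sepreplemma}, so Collection must hold in $\MM[G]$; since $\MM[G]$ then satisfies both Separation and Collection, Replacement follows. The only technical subtlety is the class-parameter handling in the definition of $F$, which is immediate from Observation \ref{defsubsetclasses}; the rest is routine bookkeeping with the hierarchy and the global well-order provided by $\GBC^-$.
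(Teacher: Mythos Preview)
Your proof is correct and follows essentially the same route as the paper: prove the contrapositive by producing a cofinal function $F'\colon\lambda\to\On^M$ in $\C[G]$ and then invoking Lemma~\ref{sepreplemma}. The only difference is cosmetic: the paper obtains the cofinal function by first noting that $\PP$ is not pretame (via Theorem~\ref{thm:pretame axioms}(1)) and then citing Lemma~\ref{lemma:pretame cofinal function} together with the axiom of choice, whereas you construct it directly from the failing instance of Collection using the transferred hierarchy --- which is precisely what the proof of the converse direction of Lemma~\ref{lemma:pretame cofinal function} does anyway, so your version just unpacks that citation.
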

\begin{proof}
Assume that Collection fails in $\MM[G]$. By Theorem \ref{thm:pretame axioms}, (1), this implies that $\PP$ is not pretame. We want to show that Separation fails in $\MM[G]$. Using Lemma \ref{lemma:pretame cofinal function} together with the axiom of choice (which is a consequence of the existence of a set-like wellorder), we can pick $\lambda\in\On^M$ and a class $F\in\C[G]$ that is a cofinal function from $\lambda$ to $\On^M$. The statement of the theorem now follows directly from Lemma \ref{sepreplemma}
\end{proof}

\begin{corollary}\label{cor:pretame axioms}
 Suppose that $\MM=\langle M,\C\rangle$ is a countable transitive model of $\GBC^-$, and let $\PP$ be a notion of class forcing for $\MM$. Then the following statements are equivalent:
 \begin{enumerate-(1)}
  \item $\PP$ is pretame.
  \item $\PP$ preserves $\GB^-$.
  \item $\PP$ preserves Collection.
  \item $\PP$ preserves Replacement.
  \item $\PP$ preserves Separation and satisfies the forcing theorem. 
 \end{enumerate-(1)}
\end{corollary}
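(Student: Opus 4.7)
The plan is to verify the equivalences by assembling the results already proved earlier in the paper, noting that the hypothesis $\MM\models\GBC^-$ gives us a set-like well-order of $M$ in $\C$, which by Remark \ref{rem:choice principles}(1) provides a hierarchy on $M$. Thus the hypotheses of both parts of Theorem \ref{thm:pretame axioms} are at our disposal.

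First I would establish the cycle (1) $\Rightarrow$ (2) $\Rightarrow$ (3) $\Rightarrow$ (1). The implication (1) $\Rightarrow$ (2) is exactly Theorem \ref{thm:pretame axioms}(1). For (2) $\Rightarrow$ (3), recall from the definitions at the start of the paper that $\GB^-$ is $\GB$ minus Power Set but \emph{with} Collection, so preservation of $\GB^-$ immediately yields preservation of Collection. For (3) $\Rightarrow$ (1), I invoke Theorem \ref{thm:pretame axioms}(2): since $\MM\models\GBC^-$ has a hierarchy, any notion of class forcing preserving Collection is pretame.

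Next I would attach (4) to the cycle via (1) $\Rightarrow$ (4) $\Rightarrow$ (1). For (1) $\Rightarrow$ (4), since pretameness preserves $\GB^-$, it preserves both Collection and Separation, which together yield Replacement in $\MM[G]$. For (4) $\Rightarrow$ (1), I again apply Theorem \ref{thm:pretame axioms}(2), whose statement explicitly covers the case of preservation of Replacement.

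Finally, I would handle (5) by proving (1) $\Rightarrow$ (5) $\Rightarrow$ (1). The forward direction combines Theorem \ref{thm:pretame ft} (pretameness implies the forcing theorem) with the Separation clause of preservation of $\GB^-$ granted by Theorem \ref{thm:pretame axioms}(1). The reverse direction is the one place where Theorem \ref{thm:sep rep} is used: if $\PP$ satisfies the forcing theorem and preserves Separation, then Theorem \ref{thm:sep rep} upgrades this to preservation of Collection in every generic extension, and then Theorem \ref{thm:pretame axioms}(2) delivers pretameness. There is no real obstacle here; the content has been concentrated in the earlier theorems, and the corollary is essentially a bookkeeping exercise ensuring that the hypotheses of each cited result (in particular, the existence of a hierarchy, which is provided by $\GBC^-$) are in force.
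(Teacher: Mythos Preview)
Your proposal is correct and follows essentially the same route as the paper's proof: the equivalence of (1)--(4) is read off Theorem \ref{thm:pretame axioms}, (1) $\Rightarrow$ (5) combines Theorem \ref{thm:pretame ft} with Theorem \ref{thm:pretame axioms}(1), and (5) is linked back via Theorem \ref{thm:sep rep}. Your explicit remark that $\GBC^-$ supplies a hierarchy (needed for Theorem \ref{thm:pretame axioms}(2)) is a useful clarification that the paper leaves implicit.
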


\begin{proof}
 The equivalence of (1)--(4) follows from Theorem \ref{thm:pretame axioms}.
 The implication from (1) to (5) follows from Theorem \ref{thm:pretame ft} together with Theorem \ref{thm:pretame axioms}. 
The implication from (5) to (3) is provided by Theorem \ref{thm:sep rep}.
\end{proof}

\section{Boolean completions}\label{sec:bc}
 
As has been shown in \cite{ClassForcing}, the existence of Boolean completions is closely related to the forcing theorem. Namely by \cite[Theorem 5.5]{ClassForcing}, if $\MM$ is a countable transitive model of $\GB^-$with a hierarchy, and $\PP$ is a separative notion of class forcing for $\MM$, then $\PP$ has a Boolean $M$-completion if and only if it satisfies the forcing theorem for all $\L_\in$-formulae. Thus the following is a consequence of Theorem \ref{cor:pretame densely ft}.

\begin{theorem}\label{thm:ft bc}
Suppose that $\MM$ is a countable transitive model of $\GBC^-$ such that $\C$ does not contain a first-order truth predicate for $M$. Then a notion of class forcing $\PP$ for $\MM$ is pretame over $\MM$ if and only if it densely has a Boolean $M$-completion. \hfill\qedsymbol
\end{theorem}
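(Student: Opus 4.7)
The plan is to show that this theorem follows essentially by combining two results already available in the paper and its reference~\cite{ClassForcing}. The key observation is that both sides of the desired equivalence can be reduced, via the ``densely'' quantifier, to a statement about a single notion of class forcing, and for such a single notion the forcing theorem and the existence of a Boolean $M$-completion are known to coincide.

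First I would invoke Corollary~\ref{cor:pretame densely ft}, which applies directly since $\MM$ is assumed to be a countable transitive model of $\GBC^-$ with no first-order truth predicate in $\C$. This yields that $\PP$ is pretame over $\MM$ if and only if $\PP$ densely satisfies the forcing theorem, i.e.\ every notion of class forcing $\QQ$ for $\MM$ admitting a dense embedding $\pi\in\C$ from $\PP$ satisfies the forcing theorem.

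Next I would invoke \cite[Theorem 5.5]{ClassForcing}, which states that for a separative notion of class forcing $\QQ$ over a model of $\GB^-$ with a hierarchy, $\QQ$ has a Boolean $M$-completion if and only if $\QQ$ satisfies the forcing theorem for all $\L_\in$-formulae. The hierarchy hypothesis is automatic here: since $\MM\models\GBC^-$, the set-like well-order provided by $\GBC^-$ yields a hierarchy by Remark~\ref{rem:choice principles}(1). Applying this equivalence to each dense extension $\QQ$ of $\PP$ (which is itself a notion of class forcing for $\MM$, in particular separative, by the definition of ``densely'' together with the separativity built into our definition of a notion of class forcing), I obtain that $\PP$ densely satisfies the forcing theorem if and only if $\PP$ densely has a Boolean $M$-completion. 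Chaining the two equivalences gives the theorem.

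Since the theorem is essentially a corollary of the two cited results, I do not anticipate any serious obstacle. The only point requiring care is the verification that the hypotheses of \cite[Theorem 5.5]{ClassForcing} transfer to each dense extension $\QQ$ of $\PP$, but this is immediate from the definitions used in the paper. In particular there is no need to revisit the proof of \cite[Theorem 5.5]{ClassForcing} or to produce the Boolean completion explicitly.
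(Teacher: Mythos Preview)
Your proposal is correct and follows essentially the same approach as the paper: the theorem is stated with a \qedsymbol and is explicitly presented as a consequence of Corollary~\ref{cor:pretame densely ft} combined with \cite[Theorem~5.5]{ClassForcing}, exactly as you outline. Your only addition is making explicit why the hierarchy hypothesis of \cite[Theorem~5.5]{ClassForcing} is satisfied (via Remark~\ref{rem:choice principles}(1)) and why each dense extension $\QQ$ is separative, which the paper leaves implicit.
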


\begin{lemma}\label{lemma:unique bc}
Let $\MM$ be a countable transitive model of $\GB^-$.
If a notion of class forcing $\PP$ for $\MM$ has a Boolean $\C$-completion $\BB$, then it is unique. 
Moreover, if $\PP$ has a unique Boolean $M$-completion $\BB$, then $\BB$ is a Boolean $\C$-completion of $\PP$. 
\end{lemma}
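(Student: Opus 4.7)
For part (1), the standard set-forcing uniqueness argument adapts once $\C$-completeness is available to supply the needed suprema. Given two Boolean $\C$-completions $\BB_0,\BB_1$ of $\PP$ with dense embeddings $e_0,e_1\in\C$, I would define
\[
 f(b)=\sup\nolimits_{\BB_1}\{e_1(p)\mid p\in\PP,\ e_0(p)\leq_{\BB_0}b\}
\]
for $b\in\BB_0$. The defining class lies in $\C$ by first-order class comprehension, and $\C$-completeness of $\BB_1$ guarantees the supremum exists in $\BB_1$. Defining $g\colon\BB_1\to\BB_0$ symmetrically and checking, as in the classical set-forcing proof and using density and separativity, that $f$ and $g$ are mutually inverse Boolean homomorphisms yields that $f$ is a Boolean isomorphism with $f(e_0(p))=e_1(p)$ for every $p\in\PP$.

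For part (2), I would argue by contrapositive: assuming the $M$-completion $\BB$ of $\PP$ fails to be $\C$-complete, I construct a second $M$-completion of $\PP$ that cannot be isomorphic to $\BB$ over $\PP$. Pick $A\in\C$ with $A\subseteq\BB$ having no supremum in $\BB$, and form $\BB_S$ for $S=\langle A\rangle$ via the construction described in the introduction, adjoining a new element $s_A$ that is above every member of $A$ and below every upper bound of $A$ in $\BB$. Because $\sup_\BB A$ does not exist, $\BB_S$ is automatically a separative partial order in which $\BB$ is dense. Since $\BB$ satisfies the forcing theorem by \cite[Theorem~5.5]{ClassForcing}, Lemma \ref{lemma:add suprema ft} yields that $\BB_S$ does as well, and a second application of \cite[Theorem~5.5]{ClassForcing}, now to $\BB_S$, provides a Boolean $M$-completion $\BB'$ of $\BB_S$; the composition $\PP\hookrightarrow\BB\hookrightarrow\BB_S\hookrightarrow\BB'$ of dense embeddings in $\C$ then exhibits $\BB'$ as a Boolean $M$-completion of $\PP$.

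By the assumed uniqueness, there is an isomorphism $f\colon\BB'\to\BB$ in $\V$ fixing $\PP$ via the embeddings. For $b\in\BB$, density of $e_0[\PP]$ below $b$ in $\BB$ propagates through the tower of dense embeddings to yield $b=\sup_{\BB'}\{e_0(p)\mid e_0(p)\leq_\BB b\}$; since Boolean isomorphisms preserve existing suprema, applying $f$ gives $f(b)=\sup_\BB\{e_0(p)\mid e_0(p)\leq_\BB b\}=b$, so $f$ fixes $\BB$ pointwise. An analogous density argument shows that $s_A=\sup_{\BB'}A$, and consequently $f(s_A)=\sup_\BB f[A]=\sup_\BB A$, placing the supremum of $A$ inside $\BB$ and contradicting the choice of $A$. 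The principal obstacle is carefully tracking how density transfers up the tower $\BB\hookrightarrow\BB_S\hookrightarrow\BB'$, both to certify that $s_A$ really is the supremum of $A$ in $\BB'$ and to force $f$ to be the identity on $\BB$, neither of which follows from mere $M$-completeness of $\BB'$.
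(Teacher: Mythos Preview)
Your proof is correct and follows the same strategy as the paper's: for part (2), adjoin the missing supremum, invoke Lemma~\ref{lemma:add suprema ft} and \cite[Theorem~5.5]{ClassForcing} to obtain a second Boolean $M$-completion, and derive a contradiction from uniqueness. You are more careful than the paper in two respects --- you add $\sup A$ to $\BB$ rather than to $\PP$ (which is cleaner, since $A\subseteq\BB$), and you spell out why the isomorphism $f$ must fix all of $\BB$ pointwise before concluding that $\sup_\BB A$ exists --- whereas the paper's proof asserts the contradiction in a single sentence without these intermediate steps.
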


\begin{proof}
The proof of the first statement is exactly as for set forcing. Suppose now that $\BB$ is the unique Boolean $M$-completion of $\PP$ and suppose for a contradiction that $A\subseteq\BB$ is a class in $\C$ which does not have a supremum in $\BB$. Let $\QQ$ be the forcing notion obtained from $\PP$ by adding $\sup A$. Then since $\PP$ satisfies the forcing theorem, by Lemma \ref{lemma:add suprema ft} so does $\QQ$ and hence $\QQ$ has a Boolean $M$-completion $\BB'$. But by our assumption, $\BB$ and $\BB'$ are isomorphic and hence $\sup A$ exists in $\BB$, a contradiction.
\end{proof}

\begin{definition}Suppose that $\PP$ is a notion of class forcing for $\MM=\langle M,\C\rangle\models\GB^-$. 
 If $A,B\subseteq\PP$ with $A,B\in\C$, we say that $\sup_\PP A=\sup_\PP B$ if 
\begin{enumerate-(1)}
 \item $A$ is predense below every $b\in B$ and
 \item $B$ is predense below every $a\in A$. 
\end{enumerate-(1)}
Note that this definition is possible even if the suprema do not exist in $\PP$. On the other hand, if $\sup A=\sup B$ and $A$ has a supremum in $\PP$ then so does $B$ and indeed they coincide. 
\end{definition}

The following observation is a slight strengthening of a lemma which is
essentially due to Joel Hamkins and appears within the proof of \cite[Theorem 9.4]{ClassForcing}.
The below proof is very similar to the one appearing in \cite[Theorem 9.4]{ClassForcing}, however we also improved our original presentation.

\begin{lemma}\label{lem:no sup}
Suppose that $\MM$ is a countable transitive model of $\GBC^-$. 
If $\PP$ does not satisfy the $\On$-cc, then there is an antichain $A\in\C$ such that for every $B\in M$ with $B\subseteq\PP$, 
$\sup B\neq\sup A$. In particular, $A$ does not have a supremum in $\PP$.  
\end{lemma}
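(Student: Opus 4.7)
The plan is to construct $A$ as a refinement of a proper class antichain $A_0\in\C$ given by the failure of the $\On$-cc. First, using the set-like wellorder of $M$ supplied by $\GBC^-$ together with Remark \ref{rem:choice principles}, I would enumerate $A_0=\langle a_\alpha\mid\alpha\in\On^M\rangle$ in order-type $\On^M$. Then, for each $\alpha$, I would apply separativity of $\PP$ to pick (canonically via the wellorder) two pairwise incompatible conditions $a_\alpha^0,a_\alpha^1\leq_\PP a_\alpha$, and set $A=\{a_\alpha^0\mid\alpha\in\On^M\}\in\C$, which is a proper class antichain because the $a_\alpha^0$ refine elements of the antichain $A_0$. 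A technical caveat: the splitting requires $a_\alpha$ not to be a $\PP$-atom; if some $a_\alpha$ are atoms, I would first discard them (the non-atomic remainder is still a proper class since $A_0\notin M$), or handle the all-atomic case separately.

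Suppose toward a contradiction that some $B\in M$ with $B\subseteq\PP$ satisfies $\sup B=\sup A$. My first step will be to show that for every $b\in B$ and every $\alpha\in\On^M$ one has $b\perp_\PP a_\alpha^1$: a common refinement $r\leq_\PP b,a_\alpha^1$ would, by predensity of $A$ below $b$, extend to some $r'\leq_\PP r$ compatible with some $a_\gamma^0\in A$; since $A_0$ is an antichain this forces $\gamma=\alpha$, whence $r'\leq_\PP a_\alpha^0,a_\alpha^1$ contradicts their incompatibility. My next step will be to use predensity of $B$ below each $a_\alpha^0$ to define, via the wellorder, a class function $f\colon\On^M\to B$, with $f(\alpha)$ the $\prec$-least $b\in B$ such that $b\parallel_\PP a_\alpha^0$. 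Since, by Collection in $\MM$, a set-indexed union of sets of $M$ is itself a set of $M$, and $\bigcup_{b\in B}f^{-1}\{b\}=\On^M$ is a proper class, I will conclude that there exists $b^*\in B$ whose fiber $I=f^{-1}\{b^*\}\in\C$ is a proper class.

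The main obstacle will be the closing step: deriving a contradiction from the two facts that $b^*$ is compatible with the proper class $\{a_\alpha^0\mid\alpha\in I\}$ while being incompatible with every $a_\alpha^1$. My intended strategy is to produce $r\leq_\PP b^*$ that is incompatible with every element of $A$, contradicting predensity of $A$ below $b^*$; the idea is that the uniform incompatibility of $b^*$ with the $a_\alpha^1$'s should provide the structural room to carve out such an $r$. If the binary split turns out to be too coarse, my backup plan is to replace the two-element split of each $a_\alpha$ by a family $\{a_\alpha^\eta\mid\eta<\kappa^+\}$ of pairwise incompatible refinements, where $\kappa=|B|^M$, and to apply a further pigeonhole argument that forces some $a_\alpha^\eta$ to witness the failure of predensity below $b^*$. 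I regard this final step as the most delicate part of the plan, and closing it may well demand a more substantial modification of the construction of $A$, exploiting the full strength of the set-like wellorder and possibly the countability of $\MM$ in $\V$ to argue that the collection of set-realizable suprema in the ambient Boolean completion is too sparse to accommodate $\sup A$.
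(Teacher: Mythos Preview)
Your closing step is a genuine gap, and I do not see how to fill it along the lines you suggest. Predensity of $A$ below $b^*$ is exactly what you are assuming (via $\sup B=\sup A$ and $b^*\in B$), so the existence of an $r\leq_\PP b^*$ incompatible with all of $A$ is precisely what must \emph{fail}. The information you have---that $b^*\perp_\PP a_\alpha^1$ for all $\alpha$---does not obstruct $A$ from being predense below $b^*$: in the Boolean completion, $b^*$ could simply be $\sup\{a_\alpha^0\mid\alpha\in I\}$ for your proper class $I$, and then every extension of $b^*$ meets $A$. Your backup plan of splitting into $\kappa^+$ pieces with $\kappa=|B|^M$ is circular: $A$ must be fixed \emph{before} the hypothetical $B$ is given, so you cannot tailor the splitting to $|B|$.

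The paper's argument is of an entirely different nature and avoids this difficulty. Rather than building a single candidate $A$ and arguing directly, one fixes a class-sized antichain $A$, assumes for contradiction that \emph{every} subclass $X\subseteq A$ in $\C$ has some $B_X\in M$ with $\sup B_X=\sup X$, and observes that---because $A$ is an antichain---distinct subclasses have distinct suprema, so the assignment $X\mapsto B_X$ (made canonical via the global wellorder) is injective. Coding sets by ordinals, this yields an injection from $\mathcal P(\On^M)\cap\C$ into $\On^M$ that is definable in $\MM$, and one then runs a Russell-style diagonal argument on the class $C=\{\alpha\mid\alpha\notin\text{the subclass coded by }\alpha\}$ to reach a contradiction. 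The key idea you are missing is this passage from ``one $A$ works'' to ``not all subclasses can fail,'' which converts the problem into a cardinality/diagonalization obstruction rather than a direct combinatorial one.
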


\begin{proof}Let $A\in\C$ be a class-sized antichain in $\PP$. We claim that there is a subclass of $A$ in $\C$ which fulfills the desired properties. Suppose for a contradiction that no such subclass exists. 
Using the set-like well-order of $M$, we can assume that the domain of $\PP$ is $\On^M$. Let $\pi:\On^M\ra A$ be a bijection in $\C$. Furthermore, there is an injection $\varphi:\mathcal P(\On^M)\cap M\ra\On^M$ in $\C$. This gives us a mapping $i:\mathcal P(\On^M)\cap\C\ra\On^M$ in $\V$ which maps $X\subseteq\On^M$ to $\varphi(B)$, where $B$ is the least (with respect to our given global well-order) set $B\subseteq\PP$ in $M$ such that $\sup_\PP \pi''X=\sup_\PP B$. Since $A$ is an antichain, $i$ is injective. 
Moreover, whether $i(X)=\alpha$ is definable over $\MM$, so
$$C=\{\alpha\in\On^M\mid\pi(\alpha)\nleq_\PP\alpha\wedge i(X_\alpha)=\alpha\}$$
is in $\C$ for $X_\alpha=\{\beta\in\On^M\mid\pi(\beta)\leq_\PP\alpha\}$.
\setcounter{claim}{0}
\begin{claim}\label{claim:C}For each $\alpha\in\On^M$ we have $\alpha\in C$ if and only if there is $X\in\mathcal P(\On^M)\cap\C$ such that $i(X)=\alpha$ and $\alpha\notin X$.
\end{claim}
\begin{proof}
Suppose first that $\alpha\in C$. Then $\alpha\notin X_\alpha$ and so we can choose $X=X_\alpha$. Conversely, suppose that $X\in\mathcal P(\On^M)\cap\C$ is such that $i(X)=\alpha$ and $\alpha\notin X$. Then $X=X_\alpha$, because $\pi''X$ and $\pi''X_\alpha$ are both subsets of the antichain $A$ and have the same supremum. Hence $\alpha\in C$. 
\end{proof}
We will use Claim \ref{claim:C} to derive a contradiction similar to Russell's paradox. 
Consider $\beta=i(C)$. If $\beta\in C$ then by Claim \ref{claim:C} there is $X$ such that $i(X)=\beta$ but $\beta\notin X$. By injectivity of $i$, this means that $X=C$, a contradiction. On the other hand, it is also impossible that $\beta\notin C$, since otherwise $X=C$ would witness that $\beta\in C$. 
\end{proof}

The following theorem characterizes the $\On$-cc in terms of the existence of Boolean completions. Note that the equivalence of (1) and (2) is exactly the statement of \cite[Theorem 9.4]{ClassForcing}. For the benefit of the reader, we nevertheless give a full proof. 

\begin{theorem}
Suppose that $\MM=\langle M,\C\rangle$ is a countable transitive model of $\GBC^-$. Then the following statements are equivalent for every separative partial order $\PP$:
\begin{enumerate-(1)}
 \item $\PP$ satisfies the $\On$-cc.
 \item $\PP$ has a unique Boolean $M$-completion.
 \item $\PP$ has a Boolean $\C$-completion.
\end{enumerate-(1)}
\end{theorem}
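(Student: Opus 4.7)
The plan is to close the cycle $(2)\Rightarrow(3)\Rightarrow(1)\Rightarrow(2)$; together with the second assertion of Lemma \ref{lemma:unique bc} (which gives $(2)\Rightarrow(3)$ directly), this yields all the required equivalences. For the implication $(1)\Rightarrow(2)$, I would first observe that the $\On$-cc implies pretameness over $\GBC^-$ (as noted in the introduction), so that by Theorem \ref{thm:pretame ft} and \cite[Theorem 5.5]{ClassForcing} a Boolean $M$-completion exists. For uniqueness, given two Boolean $M$-completions $\BB_0,\BB_1$ with dense embeddings $e_0,e_1\in\C$, I would mimic the set-forcing argument by defining $f\colon\BB_0\to\BB_1$ via $f(b)=\sup_{\BB_1}e_1[\bar D_b]$, where $\bar D_b$ is any maximal antichain in the class $D_b=\{p\in\PP\mid e_0(p)\leq_{\BB_0}b\}$. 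The point of the $\On$-cc is that the class $\bar D_b\in\C$ is automatically a set in $M$, so that $f(b)$ is obtained as a set-sized supremum in the $M$-complete algebra $\BB_1$. Well-definedness follows because any two maximal antichains in $D_b$ are mutually predense and hence have the same supremum in $\BB_1$, and the remaining verifications that $f$ is a Boolean isomorphism fixing $\PP$ run exactly as in the set-forcing case.

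The substantive new content lies in $(3)\Rightarrow(1)$, which I would establish by a Russell-style paradox in the spirit of Lemma \ref{lem:no sup}. Assume $\BB$ is a Boolean $\C$-completion of $\PP$ with dense embedding $e\in\C$, and suppose for contradiction that $A\in\C$ is a proper-class antichain in $\PP$. Using global choice, fix a bijection $\pi\colon\On^M\to A$ in $\C$, and using the set-like well-order of $M$, fix a bijection $w\colon M\to\On^M$ in $\C$. The central construction is the map $\Phi\colon\pow(\On^M)\cap\C\to\BB$ defined by $\Phi(X)=\sup_\BB e[\pi[X]]$, which is well-defined precisely because $\BB$ is $\C$-complete. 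The main algebraic input is the injectivity of $\Phi$: the distributivity identity $e(\pi(\alpha))\wedge\sup_\BB e[\pi[X]]=\sup_{\beta\in X}(e(\pi(\alpha))\wedge e(\pi(\beta)))$, valid for class-sized suprema in a $\C$-complete Boolean algebra, together with the antichain property of $A$ shows that $e(\pi(\alpha))\leq\Phi(X)$ iff $\alpha\in X$, which immediately separates distinct subclasses. Composing with $w$ yields an injection $\Psi\colon\pow(\On^M)\cap\C\to\On^M$.

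I would then follow the Russell blueprint of Lemma \ref{lem:no sup}. Parametrize subclasses by ordinals via $X_\alpha=\{\beta\in\On^M\mid w^{-1}(\alpha)\in\BB\wedge e(\pi(\beta))\leq_\BB w^{-1}(\alpha)\}$, and set $C=\{\alpha\in\On^M\mid\Psi(X_\alpha)=\alpha\wedge\alpha\notin X_\alpha\}$. A short calculation with the same distributivity argument yields the key equivalence: $\alpha\in C$ iff there exists $X\in\pow(\On^M)\cap\C$ with $\Psi(X)=\alpha$ and $\alpha\notin X$; the nontrivial ``$\Leftarrow$'' direction pins down $X_\alpha=X$. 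Setting $\beta=\Psi(C)$, the witness $X=C$ together with the injectivity of $\Psi$ produces the classical Russell contradiction $\beta\in C\Leftrightarrow\beta\notin C$. The most delicate bookkeeping will be verifying that $C\in\C$; I would handle this by unpacking definitions so that both the parametrized class $X_\alpha$ and the assertion $\Psi(X_\alpha)=\alpha$ are first-order over $\langle M,\in\rangle$ with $e,\pi,w,\BB$ as class parameters, whereupon first-order class comprehension from $\GBC^-$ delivers $C$.
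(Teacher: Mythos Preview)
Your proposal is correct and follows essentially the same approach as the paper. For $(1)\Rightarrow(2)$ and $(2)\Leftrightarrow(3)$ you match the paper exactly (indeed, your citation of Theorem~\ref{thm:pretame ft} together with \cite[Theorem~5.5]{ClassForcing} is cleaner than the paper's reference to Theorem~\ref{thm:ft bc}, which carries an unnecessary hypothesis for the direction needed). For $(3)\Rightarrow(1)$, the paper is terser: it observes that if $\PP$ fails the $\On$-cc then so does its $\C$-completion $\BB$, and then applies Lemma~\ref{lem:no sup} directly to $\BB$ to obtain an antichain with no supremum in $\BB$, contradicting $\C$-completeness. Your version inlines and adapts that same Russell-style argument, exploiting $\C$-completeness to build the injection $\Psi\colon\pow(\On^M)\cap\C\to\On^M$ explicitly via actual suprema rather than via comparison with set-sized suprema as in Lemma~\ref{lem:no sup}; this is the same idea in a slightly different packaging.
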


\begin{proof}
Suppose first that $\PP$ satisfies the $\On$-cc. Then $\PP$ is pretame and hence it has a Boolean $M$-completion $\BB$ by Theorem \ref{thm:ft bc}. Assume that $\BB'$ is another Boolean $M$-completion. Without loss of generality, we may assume that the domain of $\PP$ is a subset of the domains of $\BB$ and $\BB'$. Then we can define an isomorphism between $\BB$ and $\BB'$ by mapping $b\in\BB$ to $\sup_{\BB'}A$, where $A\in M$ is a maximal antichain in $\{p\in\PP\mid p\leq_{\BB}b\}$. 
The equivalence of (2) and (3) is a direct consequence of Lemma \ref{lemma:unique bc}. To see that (3) implies (1), suppose that $\BB$ is a Boolean $\C$-completion of $\PP$. Assume, towards a contradiction, that $\PP$ does not satisfy the $\On$-cc. Then neither does $\BB$. But then by Lemma \ref{lem:no sup}, $\BB$ cannot be $\C$-complete. 
\end{proof}

\section{The extension maximality principle}\label{sec:EMP}

This section is motivated by the following easy observation which is mentioned in \cite[Corollary 2.3]{ClassForcing}.
The collapse forcing $\Col_*(\omega,\On)^M$, which consists of functions $n\ra\On^M$ for $n\in\omega$, ordered by reverse inclusion, is dense 
in $\Col(\omega,\On)^M$. However, unlike $\Col(\omega,\On)^M$, which collapses all $M$-cardinals, $\Col_*(\omega,\On)^M$
does not add any new sets, so $\Col(\omega,\On)^M$ and $\Col_*(\omega,\On)^M$ do not have the same generic extensions.
We will show that, under sufficient conditions on the ground model $\MM$, the property of $\PP$ of having the same generic extensions
as all forcing notions into which $\PP$ densely embeds is in fact equivalent to the pretameness of $\PP$. Let $\MM=\langle M,\C\rangle$ be countable transitive model of $\GB^-$.

\begin{definition}
 A notion of class forcing $\PP$ for $\MM$ satisfies the 
 \begin{enumerate-(1)}
    \item  \emph{extension maximality principle (EMP)} over $\MM$ if whenever $\QQ$ is a notion of class forcing for $\MM$ and  $\pi\colon\PP\to\QQ$ is a dense embedding in $\C$ then for every $\QQ$-generic filter $G$ over $\MM$, $M[G]=M[\pi^{-1}[G]]$.
 \item \emph{strong extension maximality principle (SEMP)} over $\MM$ if whenever $\QQ$ is a notion of class forcing for $\MM$, $\pi\colon\PP\to\QQ$ is a dense embedding in $\C$ and $\sigma\in M^\QQ$, then there is $\tau\in M^\PP$ with $\one_{\QQ}\Vdash_\QQ\sigma=\pi(\tau)$.
 \end{enumerate-(1)}
\end{definition}

\begin{definition}
  Given a notion of class forcing $\PP$ and a $\PP$-name $\sigma$, we define the conditions appearing in (the transitive closure of) $\sigma$ by induction on name rank as
\[\tc(\sigma)=\bigcup\{\{p\}\cup\tc(\tau)\mid\langle\tau,p\rangle\in\sigma\}.\]
\end{definition}

\begin{theorem}\label{thm:EMP}Let $\MM=\langle M,\C\rangle$ be a countable transitive model of $\GBC^-$. 
 A notion of class forcing $\PP$ for $\MM$ is pretame if and only if it satisfies the forcing theorem and the EMP. 
\end{theorem}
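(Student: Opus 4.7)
The plan is to prove both directions. For the forward direction (pretame implies forcing theorem and EMP), the forcing theorem is Theorem \ref{thm:pretame ft}. For the EMP part, given a dense embedding $\pi\colon\PP\to\QQ$ in $\C$ and a $\QQ$-generic $G$, set $G'=\pi^{-1}[G]$, which is easily checked to be $\PP$-generic. The inclusion $M[G']\subseteq M[G]$ is immediate from $\sigma^{G'}=\pi(\sigma)^G$ for $\sigma\in M^\PP$. For the reverse inclusion, by density of $\pi[\PP]$ in $\QQ$ one can write
\[G=\{q\in\QQ\mid\exists p\in G'\ \pi(p)\leq_\QQ q\}\in\C[G'],\]
and by Theorem \ref{thm:pretame axioms}(1), $\MM[G']\models\GB^-$. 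Hence, for $\sigma\in M^\QQ$, the evaluation $\sigma^G$ can be formed as a set in $M[G']$ by recursion on name rank, using Replacement in $\MM[G']$.

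For the converse, I argue contrapositively: assuming $\PP$ satisfies the forcing theorem but is not pretame, I will exhibit a dense extension $\QQ$ of $\PP$ in which EMP fails. By Lemma \ref{lemma:pretame cofinal function}, fix $p\in\PP$, a set $a\in M$, and a class name $\dot F\in\C^\PP$ with $p\Vdash_\PP\anf{\dot F\colon\check a\to\On^M\text{ is cofinal}}$. Following the construction in the proof of Lemma \ref{sepreplemma}, fix a class $D\in\C$ and a class name $\Gamma\in\C^\PP$ such that, for every $\PP$-generic $G'\ni p$, $\Gamma^{G'}=\{x\in a\mid\dot F^{G'}(x)\in D\}$ and $\Gamma^{G'}\notin M[G']$; the latter is the explicit failure of Separation witnessed by the cofinality of $\dot F^{G'}$.

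The key idea is to add just enough suprema to $\PP$ to convert the class-sized name $\Gamma$ into a set-sized name. Concretely, set $E_x=\{q\leq_\PP p\mid q\Vdash_\PP\dot F(\check x)\in D\}\in\C$ for $x\in a$; by the forcing theorem with class parameters, the sequence $S=\langle E_x\mid x\in a\rangle$ lies in $\C$. Let $\QQ=\PP_S$ be obtained from $\PP$ by adding the suprema $s_x=\sup E_x$. Since $a\in M$, this gives a notion of class forcing for $\MM$ in which $\PP$ is densely embedded via the inclusion map $\pi$. Set $\dot X=\{\langle\check x,s_x\rangle\mid x\in a\}\in M^\QQ$. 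Given a $\QQ$-generic $G$ with $p\in G$ and $G'=\pi^{-1}[G]$, the defining properties of $s_x$ as a supremum yield $s_x\in G$ iff $G'\cap E_x\neq\emptyset$ iff $\dot F^{G'}(x)\in D$. Hence $\dot X^G=\Gamma^{G'}\in M[G]\setminus M[G']$, witnessing that EMP fails.

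The main technical point is to extract from the proof of Lemma \ref{sepreplemma} an explicit $D\in\C$ whose associated $\Gamma$ is both not nameable in $M^\PP$ over any relevant generic and is recoverable as the evaluation of the specific set name $\dot X$ produced by adjoining the suprema $s_x$; once this alignment is in place, the other required checks—that $S\in\C$, that $\PP$ is densely embedded in $\PP_S$, and that $\dot X$ lies in $M^\QQ$—are routine applications of facts already established in the paper.
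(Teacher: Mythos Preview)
Your proof is correct. The backward direction is essentially the paper's argument: both find a class $\Gamma$ witnessing a failure of Separation (the paper via Theorem~\ref{thm:sep rep}, you via the explicit construction in Lemma~\ref{sepreplemma}), then add set-many suprema to $\PP$ so that $\Gamma$ acquires a set-sized name in the dense extension $\QQ$, thereby separating $M[H]$ from $M[H\cap\PP]$.

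The forward direction, however, takes a genuinely different route. The paper uses pretameness \emph{directly}: for a given $\sigma\in M^\QQ$, it applies pretameness to the set-indexed family of dense classes $D_q=\{p\in\PP\mid p\leq_\QQ q\vee p\perp_\QQ q\}$ for $q\in\tc(\sigma)$, obtaining predense sets $d_q$ below some $p\in G\cap\PP$, and then recursively builds a $\PP$-name $\bar\sigma$ with $\bar\sigma^{G\cap\PP}=\sigma^G$. You instead use pretameness only indirectly, via Theorem~\ref{thm:pretame axioms}(1): since $\MM[G']\models\GB^-$ and $G\in\C[G']$, the evaluation map $\sigma\mapsto\sigma^G$ can be defined by class recursion inside $\MM[G']$, with Replacement guaranteeing that each $\sigma^G$ is a set. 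Your argument is cleaner and more conceptual, deferring the combinatorial work to the already-established preservation theorem; the paper's argument is more self-contained and, as the authors remark immediately after the proof, shows that for this direction one does not even need the hierarchy assumed in Theorem~\ref{thm:pretame axioms}---only $\GB^-$. Both approaches are valid.
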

\begin{proof}
 Suppose first that $\PP$ is pretame. By Theorem \ref{thm:pretame ft}, $\PP$ satisfies the forcing theorem. Let $\QQ$ be a notion of class forcing such that $\PP$ embeds densely into $\QQ$
 and let $G$ be $\QQ$-generic over $\MM$. Without loss of generality, we may assume that $\PP$ is a dense suborder of $\QQ$. Fix a $\QQ$-name $\sigma$. We claim that 
 $\sigma^G\in M[G\cap\PP]$. For every $q\in\tc(\sigma)\cap\QQ$, let $D_q=\{p\in\PP\mid p\leq_\QQ q\vee p\bot_\QQ q\}$.
 Then $D_q$ is a dense subclass of $\PP$. By pretameness, there is $p\in G\cap\PP$ and there are $d_q\subseteq D_q$ which are predense
 below $p$ in $\PP$. Now we define inductively, for every name $\tau$ in $\tc(\{\sigma\})\cap M^\QQ$,
 $$\bar\tau=\{\langle\bar\mu,r\rangle\mid\exists s[\langle\mu,s\rangle\in\tau\wedge r\in d_s\wedge r\leq_\QQ s]\}.$$
 But then $\bar\sigma\in M^\PP$ and $\sigma^G=\bar\sigma^{G\cap\PP}\in M[G\cap\PP]$.
 
 Conversely, assume that $\PP$ is not pretame but satisfies the forcing theorem. Then there is a $\PP$-generic filter $G$ such that Replacement fails in the generic extension $\MM[G]$, and by Theorem \ref{thm:sep rep}, so does Separation (note that this is where we use the existence of a set-like well-order). Hence there is a set $x\in M[G]$ and a first-order formula $\varphi$ with a class parameter $C$ such that $\{y\in x\mid \varphi(y,C)\}\notin M[G]$. By Observation \ref{defsubsetclasses}, $C$ has a class name $\Gamma\in\C^\PP$. Using the truth lemma, we can find $p\in G$ such that
 $p\Vdash_\PP\Gamma\subseteq\sigma$, where $x=\sigma^G$. By assumption, there
are no $q\in G$ and $\tau\in M^\PP$ 
 such that $q\Vdash_\PP\Gamma=\tau$. For $\mu\in\dom{\sigma}$ consider $$A_\mu=\{q\in\PP\mid q\Vdash_\PP\mu\in\Gamma\}.$$
 Let $\QQ$ be the forcing notion
 obtained from $\PP$ by adding $\sup A_\mu$ for each $\mu\in\dom{\sigma}$ such that $A_\mu$ is nonempty below $p$, as described in Section \ref{sec:intro}. 
Without loss of generality, we may assume that $\PP$ is a subset of $\QQ$ and then $\PP$ is actually a dense subset of $\QQ$. Consider the $\QQ$-name
$$\tau=\{\langle\mu,\sup A_\mu\rangle\mid\mu\in\dom{\sigma},A_\mu\text{ is nonempty below }p\}\in M^\QQ.$$
Let $H$ be the $\QQ$-generic induced by $G$, that is the upwards closure of $G$ in $\QQ$. Then $\tau$ is a $\QQ$-name for $\Gamma^G$, so $\tau^H=\Gamma^G\in M[H]\setminus M[G]$, yielding the failure of the EMP.
 \end{proof}
 
 Note that in the proof of Theorem \ref{thm:EMP} we have only used the	 set-like well-order of $M$ to show that every forcing notion which satisfies the forcing theorem and the EMP is pretame; for the other direction, it suffices to assume that $\MM\models\GB^-$ has a hierarchy. 
 
\begin{example}
\emph{Jensen coding} $\PP$ (see \cite{MR645538}) is a pretame notion of class forcing which over a model $M$ of $\ZFC$ adds a generic real $x$ such that the $\PP$-generic extension is of the form $\mathsf L[x]$. Moreover, there is a class name $\Gamma$ for $x$ such that $\one_\PP\Vdash_\PP M[\dot G]=\mathsf L[\Gamma]$, but there is no set name $\sigma$ such that $\one_\PP\Vdash_\PP\sigma=\Gamma$. Let $\QQ$ be the forcing notion obtained from Jensen coding by adding the suprema $p_n=\sup\{p\in\PP\mid p\Vdash_\PP\check n\in\Gamma\}$. Since $\PP$ is pretame and dense in $\QQ$, it follows that $\QQ$ is also pretame. By Theorem \ref{thm:EMP}, $\PP$ satisfies the EMP and hence $\PP$ and $\QQ$ produce the same generic extensions. In particular, this means that if $G$ is $\QQ$-generic then $M[G\cap\PP]=M[G]=\mathsf L[\sigma^G]$, where $\sigma=\{\langle\check n,p_n\rangle\mid n\in\omega\}\in M^\QQ$.
\end{example}

\begin{lemma}\label{lemma:SEMP}
Suppose that $\MM=\langle M,\C\rangle$ is a countable transitive model of $\GBC^-$. Then a notion of class forcing $\PP$ for $\MM$ satisfies the SEMP if and only if it satisfies the $\On$-cc over $\MM$. 
\end{lemma}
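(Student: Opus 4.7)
The forward direction: Assume $\PP$ satisfies the $\On$-cc. Given a dense embedding $\pi\colon\PP\to\QQ$ in $\C$, which we may assume is the inclusion so that $\PP$ is a dense subclass of $\QQ$, and given $\sigma\in M^\QQ$, the goal is to produce $\bar\sigma\in M^\PP$ with $\one_\QQ\Vdash_\QQ\sigma=\bar\sigma$. Using the set-like well-order, pick uniformly for each $q\in\QQ$ a maximal antichain $A_q$ in $\{p\in\PP\mid p\leq_\QQ q\}$; by the $\On$-cc each $A_q$ is a set in $M$, and the sequence $\langle A_q\mid q\in\QQ\rangle$ is a class in $\C$. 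A short density argument shows that each $A_q$ is predense below $q$ in $\QQ$. Define recursively on name rank
$$\bar\mu=\{\langle\bar\nu,p\rangle\mid\langle\nu,q\rangle\in\mu,\ p\in A_q\}$$
for $\mu\in M^\QQ$, executing the recursion inside $M$ via Collection so as to ensure $\bar\mu\in M^\PP$. To verify $\one_\QQ\Vdash_\QQ\bar\mu=\mu$, evaluate at an arbitrary $\QQ$-generic $H$: upward closure of $H$ combined with predensity of $A_q$ below $q$ gives $A_q\cap H\neq\emptyset\iff q\in H$, and together with the inductive hypothesis $\bar\nu^H=\nu^H$ this yields $\bar\mu^H=\mu^H$.

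For the backward direction I argue by contrapositive. Suppose $\PP$ does not satisfy the $\On$-cc. By Lemma \ref{lem:no sup}, choose an antichain $A\in\C$ of $\PP$ such that no set $B\in M$ with $B\subseteq\PP$ has $\sup B=\sup A$. Form $\QQ=\PP\cup\{s\}$ by adjoining the supremum $s=\sup A$ as in Section \ref{sec:intro}; the inclusion is a dense embedding of $\PP$ into $\QQ$ in $\C$. Let $\sigma=\{\langle\check 0,s\rangle\}\in M^\QQ$. For every $\PP$-generic $G$ with induced $\QQ$-generic $H$ (the upward closure of $G$ in $\QQ$), the ordering on $\QQ$ together with a short density argument gives $s\in H\iff G\cap A\neq\emptyset$, so $\sigma^H=\{0\}$ exactly when $G\cap A\neq\emptyset$ and $\sigma^H=\emptyset$ otherwise. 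Suppose for contradiction the SEMP provides $\tau\in M^\PP$ with $\one_\QQ\Vdash_\QQ\sigma=\tau$, so that $\tau^G=\sigma^H$ for every $\PP$-generic $G$. Let $Q=\{q\in\PP\mid\exists\mu\,\langle\mu,q\rangle\in\tau\}\in M$. The key observation is that $\tau^G=\{\mu^G\mid q\in G,\,\langle\mu,q\rangle\in\tau\}$ is empty exactly when no $q\in Q$ belongs to $G$; consequently, for every $\PP$-generic $G$, $G\cap Q\neq\emptyset$ iff $G\cap A\neq\emptyset$. A routine genericity argument---using Rasiowa--Sikorski in one direction and the density of the downward closure of $Q$ below elements of $A$ (and vice versa) in the other---translates this generic equivalence into the statement that $Q$ and $A$ are mutually predense below each other's elements, i.e., $\sup Q=\sup A$, contradicting the choice of $A$.

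The main conceptual hurdle is the backward direction: identifying the right set $Q\in M$. A natural but unsuccessful attempt is to use $\{r\in\PP\mid r\Vdash_\PP\check 0\in\tau\}$, which is a class in $\C$ via the forcing theorem but typically a proper class when $\On$-cc fails; and natural refinements, such as intersecting with $\tc(\{\tau\})\cap\PP$, fail to retain predensity below every $a\in A$. The crucial insight is that the bare set $Q$ of conditions appearing as second coordinates in $\tau$ already suffices, because the rigid form $\sigma=\{\langle\check 0,s\rangle\}$ forces $\tau^G$ to be empty precisely when $G$ avoids every condition in $Q$, regardless of how the names $\mu$ evaluate. For the forward direction, the main care needed is ensuring that the transfinite recursion defining $\bar\mu$ is carried out inside $M$, which follows from Collection together with the definability of the assignment $q\mapsto A_q$.
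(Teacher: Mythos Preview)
Your proof is correct and follows essentially the same approach as the paper's. In the forward direction you globalize the choice of maximal antichains $A_q$ to all $q\in\QQ$ (the paper only fixes them for $q\in\range(\sigma)$), and in the backward direction you spell out via a genericity argument the verification that $\sup Q=\sup A$, which the paper leaves as ``easy to check''; these are minor presentational differences rather than a different route.
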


\begin{proof}
Suppose first that $\PP$ satisfies the $\On$-cc. Suppose that there is a dense embedding from $\PP$ into some forcing notion $\QQ$. Without loss of generality, we may assume that $\PP$ is a suborder of $\QQ$. We prove by induction on $\rank(\sigma)$
that for every $\sigma\in M^\QQ$ there is $\bar\sigma\in M^\PP$ with $\one_\QQ\Vdash_\QQ\sigma=\bar\sigma$. 
Assume that this holds for all $\tau$ of rank less than $\rank(\sigma)$. Then for every $\tau\in\dom{\sigma}$
there is $\bar\tau\in M^\PP$ with $\one_\QQ\Vdash_\QQ\tau=\bar\tau$. For each condition $q\in\range(\sigma)$, let
$D_q=\{p\in\PP\mid p\leq_\QQ q\}$ and choose an antichain $A_q$ which is maximal in $D_q$.
By assumption, we may do this so that $\langle\langle q,A_q\rangle\mid q\in\range(\sigma)\rangle\in M$. Then put
$$\bar\sigma=\{\langle\bar\tau,p\rangle\mid\exists q\in\QQ\,[\langle\tau,q\rangle\in\sigma\wedge p\in A_q]\}\in M^\PP.$$
By construction, $\one_\QQ\Vdash_\QQ\sigma=\bar\sigma$.

Conversely, suppose that $\PP$ does not satisfy the $\On$-cc. 
Then by Lemma \ref{lem:no sup} there is an antichain $A\in\C$ such that for no $B\in M$ with $B\subseteq\PP$, $\sup_\PP A=\sup_\PP B$. Let $\QQ=\PP\cup\{\sup A\}$ be the extension of $\PP$ given by adding the supremum of $A$. 
Now consider $\sigma=\{\langle\check0,\sup A\rangle\}\in M^\QQ$. We claim that $\sigma$ witnesses the failure of the SEMP. Suppose for a contradiction that there is $\tau\in M^\PP$ such that $\one_\QQ\Vdash_\QQ\sigma=\tau$. Let $\tau=\{\langle\mu_i,p_i\rangle\mid i\in I\}$ for some $I\in M$. But then it is easy to check that $\sup_\PP\{p_i\mid i\in I\}=\sup_\PP A$, contradicting our assumption on $A$. 
\end{proof}

\section{Nice Names}\label{sec:niceness}

This section is motivated by the observation in Lemma \ref{lem:col no nice name} below, namely that - unlike in the context of set forcing -
there are sets of ordinals in class-generic extensions which do not have a nice name. We will characterize both pretameness and the $\On$-cc in terms of the existence of nice names for sets of ordinals. We assume that $\MM=\langle M,\C\rangle$ is a countable transitive model of $\GB^-$.

\begin{definition}
Let $\PP$ be a notion of class forcing. A name $\sigma\in M^\PP$ for a set of ordinals is a \emph{nice name}
if it is of the form $\bigcup_{\alpha<\gamma}\{\check\alpha\}\times A_\alpha$ for some $\gamma\in\On^M$, where each $A_\alpha\in M$ is a set-sized antichain of conditions in $\PP$. 
\end{definition}

\begin{lemma}\label{lem:col no nice name}
Let $\PP$ denote $\Col(\omega,\On)^M$. Then in every
$\PP$-generic extension there is a subset of $\omega$ which does not have a nice $\PP$-name. 
\end{lemma}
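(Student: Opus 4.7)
The plan is to exhibit a specific subset of $\omega$ in $M[G]$ that has no nice $\PP$-name. Set $F=\bigcup G\colon\omega\to\On^M$ for the generic function, and let $X=\{n\in\omega:F(n)\text{ is a successor ordinal}\}$. This set has the canonical class name $\dot X=\{\langle\check n,p\rangle:n\in\omega,\ p\in\PP,\ n\in\dom p,\ p(n)\text{ is a successor}\}$, satisfying $\dot X^{G'}=\{n:F^{G'}(n)\text{ is a successor}\}$ in every generic extension.

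Supposing for a contradiction that $\sigma=\bigcup_{n<\omega}\{\check n\}\times A_n$ is a nice $\PP$-name with $\sigma^G=X$, the key feature of niceness is that $\bigcup_n A_n$ is a set in $M$, so there is a limit ordinal $\alpha\in\On^M$ bounding all values $p(m)$ for $p\in\bigcup_n A_n$. Since $\PP$ satisfies the forcing theorem (as observed in the example at the end of Section \ref{sec:intro}), the truth lemma provides some $p_0\in G$ with $p_0\Vdash_\PP\sigma=\dot X$. The plan is then to pick any $n\in\omega\setminus\dom p_0$ and form the two extensions $q_1=p_0\cup\{(n,\alpha+1)\}$ and $q_2=p_0\cup\{(n,\alpha)\}$, and derive a contradiction from the forcing statements $q_1\Vdash_\PP n\in\sigma$ (which follows from $q_1\Vdash_\PP n\in\dot X$, because $\alpha+1$ is a successor) and $q_2\Vdash_\PP n\notin\sigma$ (which follows from $q_2\Vdash_\PP n\notin\dot X$, because $\alpha$ is a limit).

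Unpacking niceness, the first statement amounts to $A_n$ being predense below $q_1$, while the second amounts to every $p\in A_n$ being incompatible with $q_2$. To obtain the contradiction, I would split $A_n$ into the $p$ with $n\in\dom p$ --- which are automatically incompatible with $q_1$ since $p(n)<\alpha<q_1(n)$ --- and those with $n\notin\dom p$, for which compatibility with $q_1$ and with $q_2$ coincide, since $q_1$ and $q_2$ agree off of $\{n\}$. Thus universal incompatibility with $q_2$ transfers to universal incompatibility with $q_1$, contradicting the predensity of $A_n$ below $q_1$ (applied to $r=q_1$ itself). No step of this argument strikes me as delicate; the only point requiring slight care is the routine translation between the forcing statements $q\Vdash_\PP n\in\sigma$ (resp.\ $q\Vdash_\PP n\notin\sigma$) and predensity of $A_n$ below $q$ (resp.\ universal incompatibility of the elements of $A_n$ with $q$), both of which fall straight out of the form of a nice name.
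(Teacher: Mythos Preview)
There is a genuine gap: you have not shown that your set $X=\{n:F(n)\text{ is a successor}\}$ lies in $M[G]$, and in fact it does not. You only provide the \emph{class} name $\dot X\in\C^\PP$, but the lemma asks for a subset of $\omega$ in the generic extension $M[G]=\{\sigma^G:\sigma\in M^\PP\}$, i.e.\ one admitting a \emph{set} name. Your own argument, slightly repackaged, proves that no such set name exists. Suppose $\sigma\in M^\PP$ were any set name (nice or not) with $\sigma^G=X$, and pick $p_0\in G$ with $p_0\Vdash_\PP\sigma=\dot X$. Since $\sigma$ is a set, there is $\beta\in\On^M$ bounding the ranges of all conditions occurring in $\tc(\sigma)$. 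Choose $n\notin\dom{p_0}$, a successor $\gamma>\beta$, and a limit $\gamma'>\beta$, and let $\pi$ be the automorphism of $\PP$ in $\C$ that swaps the values $\gamma$ and $\gamma'$ at coordinate $n$. Then $\pi$ fixes $p_0$ and fixes $\sigma$, so from $p_0\Vdash_\PP\sigma=\dot X$ one obtains $p_0\Vdash_\PP\sigma=\pi(\dot X)$ and hence $p_0\Vdash_\PP\dot X=\pi(\dot X)$. But $q=p_0\cup\{(n,\gamma)\}$ forces $\check n\in\dot X$ while forcing $\check n\notin\pi(\dot X)$, a contradiction. Thus $X\notin M[G]$. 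In other words, the only feature of nice names that your argument actually uses---that the conditions involved have bounded range---is shared by \emph{every} set name, so you have proved too much.

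The paper instead works with the complement of $\{n:F(n)=0\}$, and a substantial part of the proof is devoted to constructing an explicit set name for this complement (via names $\tau_n$ for ``the least $k\geq n$ with $F(k)\neq 0$''). The point is that ``$F(i)=0$ for all $n\leq i<m$'' is decided by the single condition $\{\langle i,0\rangle:n\leq i<m\}$, making such a construction possible; by contrast, ``$F(i)$ is a limit for all $n\leq i<m$'' is witnessed by a proper class of conditions, and no analogous set name is available for your $X$. The asymmetry between $0$ and its complement in $\On^M$ is exactly what the paper exploits and what your symmetric successor/limit dichotomy lacks.
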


\begin{proof}
Consider the canonical name $\sigma=\{\langle\check n,\{\langle n,0\rangle\}\rangle\mid n\in\omega\}$
for the set of natural numbers which are mapped to 0 by the generic function from $\omega$ to the ordinals. 
Let $G$ be $\PP$-generic over $\MM$.
We show that the complement of $\sigma^G$ is an element of $M[G]$, but does not have a nice $\PP$-name in $M$.
Suppose for a contradiction that there are $p\in G$ and a nice $\PP$-name $$\tau=\bigcup_{n\in\omega}\{\check n\}\times A_n\in M,$$
where each $A_n\in M$ is an antichain, such that $p\Vdash_\PP\check\omega\setminus\sigma=\tau$. 
Let $n\in\omega\setminus\dom{p}$ and choose $\alpha>\sup\{r(i)\mid r\in A_n\wedge i\in\dom{r}\}$. Then $q=p\cup\{\langle n,\alpha\rangle\}$ strengthens $p$ and $q\Vdash_\PP\check n\in\tau$. Hence there must be some $r\in A_n$ which is compatible with $q$. But then $n\notin\dom{r}$, so $p$ and $r\cup\{\langle n,0\rangle\}$ are compatible. Let $s\leq_\PP p,r\cup\{\langle n,0\rangle\}$ witness this. Then $s\Vdash_\PP\check n\in\sigma\cap\tau$, a contradiction.

It remains to show that the complement of $\sigma^G$ has a $\PP$-name in $M$, and is thus an element of $M[G]$. For each $n\in\omega$, consider the $\PP$-name $$\tau_n=\check n\cup\{\langle\check m,\{\langle i,0\rangle\mid n\leq i<m\}\rangle \mid m>n\}.$$
Then each $\tau_n$ is a name for the least $k\geq n$ such that $k\notin\sigma^G$. Now put $\tau=\{\langle\tau_n,\one_\PP\rangle\mid n\in\omega\}$. Since by an easy density argument the complement of $\sigma^G$ is unbounded in $\omega$, $\tau$ is as desired. 
\end{proof}

\begin{definition}
A notion of class forcing $\PP$ for $\MM$ is said to be 
\begin{enumerate-(1)}
 \item \emph{nice} if for every $\gamma\in\On^M$, for every $\sigma\in M^\PP$ and for every $\PP$-generic filter $G$ such that $\sigma^G\subseteq\gamma$ there is a nice name $\tau\in M^\PP$ such that $\sigma^G=\tau^G$. 
 \item \emph{very nice} if for every $\gamma\in\On^M$ and for every $\sigma\in M^\PP$ such that 
 $\one_\PP\Vdash_\PP\sigma\subseteq\check\gamma$ there is a nice name $\tau\in M^\PP$ such that $\one_\PP\Vdash_\PP\sigma=\tau$.
\end{enumerate-(1)}
\end{definition}

\begin{example}\label{example:nice}
 \begin{enumerate-(1)}
  \item By Lemma \ref{lem:col no nice name}, $\Col(\omega,\On)^M$ is not nice.
  \item Suppose $M$ satisfies the axiom of choice. Then every pretame notion of class forcing $\PP$ for $\MM$ is nice.
  To see this, let $\gamma\in\On^M$ be an ordinal and let $p\in\PP$ and $\sigma\in M^\PP$ be such that $p\Vdash_\PP\sigma\subseteq\check\gamma$. For each $\alpha<\gamma$, consider the class 
$$D_\alpha=\{q\leq_\PP p\mid q\Vdash_\PP\alpha\in\sigma\vee q\Vdash_\PP\alpha\notin\sigma\}\in\C,$$
which is dense below $p$. By pretameness there exist $q\leq_\PP p$ and a sequence $\langle d_\alpha\mid\alpha<\gamma\rangle\in M$ such that 
for every $\alpha<\gamma$, $d_\alpha\subseteq D_\alpha$ is predense below $q$. For each $\alpha<\gamma$, choose 
an antichain $a_\alpha\subseteq d_\alpha$ which is maximal in $d_\alpha$, and let $A_\alpha=\{r\in a_\alpha\mid r\Vdash_\PP\check\alpha\in\sigma\}$. Then  
$$\tau=\bigcup_{\alpha<\gamma}\{\check\alpha\}\times A_\alpha\in M^\PP$$
is a nice name for a subset of $\gamma$ and $q\Vdash_\PP\sigma=\tau$.
  \item If $\MM\models\GBC^-$, then every notion of class forcing $\PP$ for $\MM$ which satisfies the $\On$-cc is very nice:
  Suppose that $\one_\PP\Vdash_\PP\sigma\subseteq\check\gamma$. For every $\alpha<\gamma$, we can choose an
  antichain $A_\alpha$ which is maximal in $\{q\in\PP\mid \exists \langle\mu,p\rangle\in\sigma\,[q\leq_\PP p\wedge q\Vdash_\PP\mu=\check\alpha]\}$. 
  Since $\PP$ satisfies the $\On$-cc, making use of the global well-order we can do this so that $\langle\langle\alpha,A_\alpha\rangle\mid\alpha<\gamma\rangle\in M$. Then 
  $$\tau=\bigcup_{\alpha<\gamma}\{\check\alpha\}\times A_\alpha\in M^\PP$$
  is a nice name and it is easy to check that $\one_\PP\Vdash_\PP\sigma=\tau$. 
  \item Suppose that $\MM$ has a hierarchy. Then every $M$-complete Boolean algebra $\BB$ is very nice, since we can always define \emph{Boolean values} $\llbracket\varphi\rrbracket_\BB$ for quantifier-free infinitary formulae $\varphi$ which mention only set names (see \cite[Theorem 5.5]{ClassForcing}). More precisely, if 
   $\sigma\in M^\BB$ such that
  $\one_\BB\Vdash_\BB\sigma\subseteq\check{\gamma}$ for some ordinal $\gamma$, the name  $$\tau=\{\langle\check\alpha,\llbracket\check\alpha\in\sigma\rrbracket_\BB\rangle\mid\alpha<\gamma\}\in M^\BB$$
 is a nice name so that $\one_\BB\Vdash_\BB\sigma=\tau$. In particular, this shows that there are very nice notions of class forcing which are not pretame (for example the Boolean $M$-completion of $\Col(\omega,\On)^M$),
 since every notion of class forcing for $\MM$ which satisfies the forcing theorem has a Boolean $M$-completion by \cite[Theorem 5.5]{ClassForcing}.
 \end{enumerate-(1)}
\end{example}

We will need the following.

\begin{lemma}\label{lemma:intersection}
Let $\PP$ be a notion of class forcing for $\MM$. Let $\alpha\in\On^M$ be an ordinal and let $\sigma,\tau\in M^\PP$ be
nice names for subsets of $\alpha$. If $G$ is $\PP$-generic over $\MM$, then there is a $\PP$-name $\mu\in M^\PP$ such that 
$\mu^G=\sigma^G\cap\tau^G$. 
\end{lemma}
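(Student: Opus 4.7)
The plan is to construct $\mu$ directly from the nice-name presentations of $\sigma$ and $\tau$. Write $\sigma=\bigcup_{\beta<\alpha}\{\check\beta\}\times A_\beta$ and $\tau=\bigcup_{\beta<\alpha}\{\check\beta\}\times B_\beta$, where each $A_\beta,B_\beta\in M$ is an antichain in $\PP$.

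My candidate will be of the shape
\[\mu=\bigcup_{\beta<\alpha}\{\check\beta\}\times C_\beta,\]
where for each $\beta$ the set $C_\beta\in M$ is a collection of common lower bounds of compatible pairs from $A_\beta\times B_\beta$. The verification $\mu^G=\sigma^G\cap\tau^G$ will then split in the usual way: for $(\subseteq)$, any $r\in G\cap C_\beta$ forces the witnessing $p\in A_\beta$ and $q\in B_\beta$ into $G$ by upward closure, so $\beta\in\sigma^G\cap\tau^G$; for $(\supseteq)$, given $\beta\in\sigma^G\cap\tau^G$, I would pick $p\in A_\beta\cap G$ and $q\in B_\beta\cap G$, use the filter property of $G$ to produce a common lower bound in $G$, and then argue that $C_\beta$ was chosen large enough to meet $G$.

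The main obstacle is ensuring that $\mu$ really lies in $M^\PP$, since the class of \emph{all} common lower bounds of pairs in $A_\beta\times B_\beta$ is typically a proper class of $\MM$. I would address this by applying Collection in $\MM$ to the set $A_\beta\times B_\beta$ and the class formula (with class parameter $\PP$) asserting that $r$ is a common lower bound of $p$ and $q$ whenever these are compatible, thereby extracting a set $y_\beta\in M$ containing at least one common lower bound for every compatible pair. Setting $C_\beta=\{r\in y_\beta\mid\exists p\in A_\beta\,\exists q\in B_\beta\ (r\leq_\PP p\wedge r\leq_\PP q)\}\in M$ by Separation, and then bundling the family $\langle C_\beta\mid\beta<\alpha\rangle$ via Replacement in $\MM$, yields $\mu\in M^\PP$.

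The most delicate point, which I expect to need the greatest care, is that bare Collection only guarantees one common lower bound per compatible pair, whereas the $(\supseteq)$-direction demands that \emph{some} such witness actually lies in $G$. I plan to close this gap by a density argument: the class
\[C_\beta\cup\{r\in\PP\mid\forall p\in A_\beta\ r\perp_\PP p\}\cup\{r\in\PP\mid\forall q\in B_\beta\ r\perp_\PP q\}\]
should be dense in $\PP$ once $y_\beta$ has been chosen sufficiently rich, and when $\beta\in\sigma^G\cap\tau^G$ the filter $G$ must meet this class through its $C_\beta$-component. If a single application of Collection proves insufficient, I would iterate it once more to ensure $C_\beta$ is dense below every common lower bound, so that any $r_0\in G$ with $r_0\leq_\PP p,q$ has an extension in $C_\beta$ forcing $G\cap C_\beta\neq\emptyset$ by genericity.
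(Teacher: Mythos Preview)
Your approach has a genuine gap that cannot be repaired along the lines you suggest. The lemma is stated for an \emph{arbitrary} notion of class forcing $\PP$---no forcing theorem, no pretameness---and in its application in the paper the forcing is explicitly non-pretame. In that setting your density argument breaks down: for the class
\[
C_\beta\cup\{r\in\PP\mid\forall p\in A_\beta\ r\perp_\PP p\}\cup\{r\in\PP\mid\forall q\in B_\beta\ r\perp_\PP q\}
\]
to be dense, the set $C_\beta$ would have to contain an extension of every condition lying below some compatible pair from $A_\beta\times B_\beta$. But the class of such conditions is typically proper, and no set in $M$ can be dense in it (for instance, in $\Col(\omega,\On)^M$ no set whatsoever is dense below any condition). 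Iterating Collection does not help, since each iteration still returns only a set, while you are asking that set to be dense in a proper class. What you are implicitly reaching for is precisely pretameness---the ability to shrink a set-indexed family of dense classes to predense sets below a single condition---which is unavailable here.

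The paper's argument is completely different and avoids common lower bounds altogether. Given $G$, it fixes the least $\beta_0\in\sigma^G\cap\tau^G$ and, for each $\beta\in(\beta_0,\alpha)$, defines an auxiliary name $\mu_\beta=\check\beta_0\cup\bigcup_{\gamma<\beta}\{\check\gamma\}\times Y_\beta$ whose $G$-evaluation is $\beta$ when $\beta\in\tau^G$ and $\beta_0$ otherwise. The desired name is then $\mu=\{\langle\check\beta_0,\one_\PP\rangle\}\cup\{\langle\mu_\beta,p\rangle\mid\beta\in(\beta_0,\alpha),\,p\in X_\beta\}$: the conditions $p\in X_\beta$ detect membership in $\sigma^G$, while membership in $\tau^G$ is encoded by \emph{what the inner name evaluates to}, so the two tests never need to be combined into a single condition. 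The only $G$-dependent datum entering the construction is the single ordinal $\beta_0$, so $\mu\in M^\PP$ is immediate.
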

\begin{proof}
Since $\sigma$ and $\tau$ are nice names, they are of the form 
$$\sigma=\bigcup_{\beta<\alpha}\{\check\beta\}\times X_\beta\text{ and }\tau=\bigcup_{\beta<\alpha}\{\check\beta\}\times Y_\beta,$$
where $\langle X_\beta\mid\beta<\alpha\rangle,\langle Y_\beta\mid\beta<\alpha\rangle\in M$. 
Let $G$ be $\PP$-generic over $\MM$ and let $\beta_0\in\alpha$ be minimal such that 
$\beta_0\in\sigma^G\cap\tau^G$. Put $\mu=\{\langle\check\beta_0,\one_\PP\rangle\}\cup\{\langle\mu_\beta,p\rangle\mid\beta\in(\beta_0,\alpha),p\in X_\beta\}$,
where $\mu_\beta=\check\beta_0\cup\bigcup_{\gamma<\beta}\{\check\gamma\}\times Y_\beta$ for every $\beta\in(\beta_0,\alpha)$. 
Then $$(\mu_\beta)^G=\begin{cases}
                       \beta,&\beta\in\tau^G\\
                       \beta_0,&\text{otherwise}.
                      \end{cases}$$
Clearly, $\mu^G=\sigma^G\cap\tau^G$ is as desired.
\end{proof}

\begin{theorem}\label{thm:pretame nice}
 Let $\MM=\langle M,\C\rangle$ be a countable transitive model of $\KM$. Then a notion of class forcing $\PP$ for $\MM$ is pretame if and only if it is densely nice.
\end{theorem}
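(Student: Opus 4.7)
The plan is to prove the two directions separately.

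For the forward direction, I first show that pretameness is preserved under dense embeddings in $\C$. Given a dense embedding $\pi\colon\PP\to\QQ$ in $\C$, a sequence $\langle E_i\mid i\in I\rangle\in\C$ of dense subclasses of $\QQ$ with $I\in M$, and $q\in\QQ$, I pick $p\in\PP$ with $\pi(p)\leq_\QQ q$, form the pullbacks $D_i=\{p'\in\PP\mid\exists e\in E_i,\ \pi(p')\leq_\QQ e\}$ (which are dense below $p$ in $\PP$), apply the pretameness of $\PP$ to $\langle D_i\mid i\in I\rangle$, and use the axiom of choice in $M$ (provided by $\KM$) to transport the resulting witnessing predense sets back to subsets of $E_i$ inside $\QQ$. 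Hence $\QQ$ is pretame, and by Example~\ref{example:nice}~(2) it is nice, so $\PP$ is densely nice.

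For the backward direction I argue by contraposition. Assume $\PP$ is not pretame. Since $\MM\models\KM$, impredicative class comprehension gives a definition of the forcing relation for any notion of class forcing, so $\PP$ satisfies the forcing theorem, and by Lemma~\ref{lem:surjection onto Ord from cofinal function} (restricting below a suitable condition) I obtain $\dot E\in\C^\PP$ and an $M$-cardinal $\kappa$ with $\one_\PP\Vdash_\PP\dot E\colon\check\kappa\to\On^M$ surjective. Following the diagonal construction in the proof of Lemma~\ref{sepreplemma}, I build a class $D\in\C$ such that the class name $\Gamma=\{\langle\check\alpha,q\rangle\mid q\Vdash_\PP\dot E(\check\alpha)\in D\}\in\C^\PP$ evaluates to a subset of $\kappa$ not in $M[G]$ for every $\PP$-generic $G$. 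I refine this construction so that $D$ further defeats every possible piece of support data $\langle B_\alpha,I_\alpha\mid\alpha<\kappa\rangle\in M$ with $B_\alpha\subseteq\PP$ and $I_\alpha\subseteq\kappa$; such data can be enumerated inside $\MM$ using $\KM$ comprehension.

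I then form the dense extension $\QQ$ of $\PP$ by adding $p_\alpha=\sup_\QQ D_\alpha$ for each nonempty $D_\alpha=\{q\Vdash_\PP\dot E(\check\alpha)\in D\}$, and consider the name $\sigma=\{\langle\check\alpha,p_\alpha\rangle:\alpha<\kappa,\ D_\alpha\neq\emptyset\}\in M^\QQ$, for which $\sigma^H=\Gamma^G$ whenever $H$ is $\QQ$-generic and $G=H\cap\PP$. Assuming toward contradiction that $\QQ$ is nice, I obtain a nice name $\tau=\bigcup_{\alpha<\kappa}\{\check\alpha\}\times A_\alpha\in M^\QQ$ with $\tau^H=\sigma^H$, decompose each $A_\alpha=B_\alpha\cup\{p_\beta:\beta\in I_\alpha\}$ with $\langle B_\alpha,I_\alpha\rangle\in M$, and use that $p_\beta\in H\iff\beta\in\Gamma^G$ to derive the fixed-point equation
\[\Gamma^G\;=\;Y\cup\bigcup_{\beta\in\Gamma^G}J_\beta,\qquad Y=\{\alpha\mid B_\alpha\cap G\neq\emptyset\}\in M[G],\ J_\beta=\{\alpha\mid\beta\in I_\alpha\}\in M.\]
The refined construction of $D$ is designed so that $\Gamma^G$ must coincide with the least fixed point of the monotone operator $Z\mapsto Y\cup\bigcup_{\beta\in Z}J_\beta$, which lies in $M[G]$, contradicting $\Gamma^G\notin M[G]$.

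The main obstacle is the refinement of the diagonal construction of $D$. In Lemma~\ref{sepreplemma}, the diagonalization only defeats set $\PP$-names; here it must additionally defeat all $M$-indexed support data $\langle B_\alpha,I_\alpha\rangle$ coming from potential nice $\QQ$-names, and one has to argue that the resulting $D$ forces the fixed point of the relevant operator to be unique so that it is computable inside $M[G]$. This is where impredicative comprehension in $\KM$ is used to enumerate all such data inside $\C$ and to diagonalize against them simultaneously.
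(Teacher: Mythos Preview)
Your forward direction is correct and matches the paper's argument.

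The backward direction, however, has a fatal flaw: the name
\[
\sigma=\{\langle\check\alpha,p_\alpha\rangle\mid\alpha<\kappa,\ D_\alpha\neq\emptyset\}
\]
that you construct is \emph{already a nice $\QQ$-name}, since each antichain $A_\alpha=\{p_\alpha\}$ is a singleton. Hence $\sigma$ cannot possibly witness that $\QQ$ fails to be nice, no matter how $D$ is chosen. This also explains why your fixed-point analysis cannot succeed: taking $B_\alpha=\emptyset$ and $I_\alpha=\{\alpha\}$ (so that the resulting nice name $\tau$ is literally $\sigma$), the operator $Z\mapsto Y\cup\bigcup_{\beta\in Z}J_\beta$ becomes the identity $Z\mapsto Z$, every subset of $\kappa$ is a fixed point, and there is no hope of forcing $\Gamma^G$ to coincide with the least one. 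No refinement of $D$ can exclude the data $\langle\emptyset,\{\alpha\}\rangle_{\alpha<\kappa}$, because that data always produces a nice name equal to $\sigma$.

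The paper circumvents this by a genuinely different device. It first replaces $\PP$ by its Boolean $M$-completion (permissible since the desired property is dense), then builds \emph{two} classes $D,E\subseteq\On^M$ simultaneously by a diagonal construction, and extends $\PP$ to $\QQ$ by adding suprema $p_\alpha=\sup\{q\mid q\Vdash_\PP\dot F(\check\alpha)\in\check D\}$ \emph{and} $q_\beta=\sup\{q\mid q\Vdash_\PP\dot F(\check\beta)\in\check E\}$. The associated names $\sigma$ (for the $D$-part) and $\tau$ (for the $E$-part) are each nice in $\QQ$, but the paper shows that $\sigma^G\cap\tau^G$, which does have a $\QQ$-name by Lemma~\ref{lemma:intersection}, has no nice $\QQ$-name. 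The point is that $\alpha\in\sigma^G\cap\tau^G$ requires both $p_\alpha\in H$ and $q_\alpha\in H$, and $\QQ$ contains no single condition encoding this conjunction. The intricate recursion defining $D$ and $E$ (at each stage placing new ordinals into $D\cap E$, $D\setminus E$, and $E\setminus D$) is tailored so that for every condition $r$ that could occur in an antichain $A_\alpha$ of a putative nice name---whether $r\in\PP$, $r=p_\beta$, or $r=q_\beta$---the set of possible values of $\dot F(\check\alpha)$ compatible with $r$ below $p$ is bounded, which yields the contradiction.
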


\begin{proof}
 \setcounter{claim}{0}
Suppose first that $\PP$ is pretame. It is straightforward to check that whenever there is a dense embedding $\pi\colon\PP\to\QQ$ in $\C$ for some notion of class forcing $\QQ$ for $\MM$, then $\QQ$ is also pretame. Then by Example \ref{example:nice} (2), every such $\QQ$ is nice. 

Conversely, suppose that $\PP$ is not pretame. Since $\PP$ satisfies the forcing theorem over $\MM$ (because every notion of class forcing does so over a model of $\KM$ -- see either \cite[Lemma 15]{antos2015class} or \cite[Corollary 5.8]{ClassForcing}), we may, without loss of 
generality, assume that $\PP=\langle P,\leq_\PP\rangle$ is an $M$-complete Boolean algebra, and we may also assume that $P=\On^M$. We will extend $\PP$ to a notion of class forcing $\QQ$ for $\MM$ which is 
not nice and so that $\PP$ is a dense subforcing of $\QQ$. By Lemma \ref{lemma:pretame cofinal function} there are a class name $\dot F\in\C^\PP$, $\kappa\in\On$ and $p\in\PP$ such that $p\Vdash_\PP\anf{\dot F:\check\kappa\ra\On\text{ is cofinal}}$. 
For the sake of simplicity, suppose that $p=\one_\PP$. 

For every $\alpha,\beta<\kappa$ and $p\in\PP$, let $$X_{p,\alpha,\beta}=\{\langle\gamma,\delta\rangle\in\On^M\times\On^M\mid\exists q\leq_{\PP}p\,[q\Vdash_\PP\dot F(\check\alpha)=\check\gamma\wedge\dot F(\check\beta)=\check\delta]\},$$
and let
$$Y_{p,\alpha}=\{\gamma\in\On^M\mid\exists q\leq_\PP p\,[q\Vdash_\PP\dot F(\check\alpha)=\check\gamma]\}.$$ 

\begin{claim}\label{claim:alpha}
For each $p\in\PP$ there is $\alpha<\kappa$ such that for all $\beta<\kappa$, $X_{p,\alpha,\beta}$ is a proper class.  
\end{claim}

\begin{proof}
Suppose the contrary. Then for every $\alpha<\kappa$ there exists $\beta_\alpha<\kappa$ such that $X_{p,\alpha,\beta_\alpha}$ is
set-sized. In particular, this implies that for every $\alpha<\kappa$, $\{\gamma\in\On^M\mid\exists q\leq_{\PP} p\,[q\Vdash_{\PP}\dot F(\check\alpha)=\check\gamma]\}$
is set-sized. 
But then $p$ forces that the range of $\dot F$ is bounded in the ordinals, a contradiction. 
\end{proof}

Let $C=\langle C_i\mid i\in\On^M\rangle\in\C$ be an enumeration of subclasses of $\On^M\times\On^M$ such that each $C_i$ is of the form
$X_{p,\alpha,\beta}$ for some $p\in\PP$ and $\alpha,\beta<\kappa$ such that $X_{p,\alpha,\beta}$ is a proper class, 
and moreover each $X_{p,\alpha,\beta}$ which is a proper class appears unboundedly often in the enumeration $C$. 

We will next perform a recursive construction to build two classes $D,E\in\C$, in a way that in particular each of $D\cap E$, $D\setminus E$ and $E\setminus D$ has a proper class sized intersection with $Y_{p,\alpha}=\{\gamma\mid\langle\gamma,\gamma\rangle\in X_{p,\alpha,\alpha}\}$ whenever $Y_{p,\alpha}$ is a proper class. 
The construction of the classes $D,E$ will satisfy further properties which will be used in the proof of Claim \ref{claim:Z_q,alpha^p} below. 

Let $D_0=D_0'=E_0=E_0'=\emptyset$. Suppose that $D_i,D_i',E_i,E_i'$ have already been defined such that $D_i\cap D_i'=E_i\cap E_i'=D_i'\cap E_i'=\emptyset$ 
and $D_i\cup D'_i=E_i\cup E'_i$. 
We define $F_i=D_i\cup D_i'= E_i\cup E_i'$. 
Let $\langle\gamma_0,\delta_0\rangle,\langle\gamma_1,\delta_1\rangle,\langle\gamma_2,\delta_2\rangle$ be the lexicographically least pairs of ordinals in $C_i$ such that each pair $\langle\gamma_k,\delta_k\rangle$ contains at least one ordinal not in $F_i\cup\{\gamma_j\mid j<k\}\cup\{\delta_j\mid j<k\}$, and 
$\gamma_0,\delta_0$ additionally satisfy (if possible)
\setcounter{equation}{0}
\begin{align}
 &\gamma_0\notin F_i\wedge\delta_0\notin D_i',
\end{align}
and $\gamma_1,\delta_1$ satisfy in addition (if such exist)
\begin{align}
 &\gamma_1\notin F_i\cup\{\gamma_0,\delta_0\}\wedge\delta_1\notin E_i'.
\end{align}

In the successor step, we will enlarge $D_i,D_i',E_i$ and $E_i'$ to $D_{i+1},D_{i+1}',E_{i+1}$ and $E_{i+1}'$ by
putting distinct ordinals, which are not in $F_i$, into the sets $D_{i+1}\cap E_{i+1}$, $D_{i+1}\cap E_{i+1}'$ and $D_{i+1}'\cap E_{i+1}$. 
First, we put each ordinal in $\{\gamma_0,\delta_0\}$
which is not in $F_i$ into $D_{i+1}\cap E_{i+1}'$. Next, we put all ordinals amongst $\{\gamma_1,\delta_1\}$
that are not in $F_i\cup\{\gamma_0,\delta_0\}$ into $D_{i+1}'\cap E_{i+1}$. 
Finally, we put every ordinal in $\{\gamma_2,\delta_2\}$
which is not yet in $F_i\cup\{\gamma_0,\gamma_1,\delta_0,\delta_1\}$ into $D_{i+1}\cap E_{i+1}$. 
Note that by construction, $D_{i+1}\cap D'_{i+1}=E_{i+1}\cap E_{i+1}'=D_{i+1}'\cap E_{i+1}'=\emptyset$ and
$D_{i+1}\cup D'_{i+1}=E_{i+1}\cup E'_{i+1}$. 

At limit stages, we take unions, e.g.\ if $j$ is a limit ordinal, we let $D_j=\bigcup_{i<j}D_i$. 
Finally, let $D=\bigcup_{i\in\On^M}D_i\in\C$ and let $E=\bigcup_{i\in\On^M}E_i\in\C$.

Note that at each stage $i$ such that $C_i=X_{p,\alpha,\alpha}$ for some $p\in\PP$ and $\alpha\in\On^M$, each of the classes 
$D\cap E$, $D\setminus E$ and $E\setminus D$ obtains a new element from $Y_{p,\alpha}$. 
Since there are class many such stages, each of $D\cap E$, $D\setminus E$ and $E\setminus D$ has a proper class-sized intersection with $Y_{p,\alpha}$ whenever $Y_{p,\alpha}$ is a proper class.

Let $a=\{\alpha<\kappa\mid\exists p\in\PP\,[p\Vdash_\PP\dot F(\check\alpha)\in\check D]\}$ and let
$b=\{\alpha<\kappa\mid\exists p\in\PP\,[p\Vdash_\PP\dot F(\check\alpha)\in\check E]\}$. 
We extend $\PP$ to a forcing notion $\QQ$ by 
adding suprema for each of the classes
\begin{align*}
 R_{\alpha}&=\{p\in\PP\mid p\Vdash_\PP\dot F(\check\alpha)\in\check D\}\textrm{ and}\\
 S_\beta&=\{p\in\PP\mid p\Vdash_\PP\dot F(\check\beta)\in\check E\}
\end{align*}
for $\alpha\in a$ and $\beta\in b$, as described in Section \ref{sec:intro}. Let $p_\alpha=\sup_\QQ R_\alpha$ and let $q_\beta=\sup_\QQ S_\beta$ for 
$\alpha\in a$ resp. $\beta\in b$. Since $\MM$ is a model of $\KM$, $\QQ$ satisfies the forcing theorem.

We will show that $\QQ$ is not nice. Let $\dot G$ denote the canonical class name for the $\QQ$-generic filter. Consider the $\QQ$-names
$$\sigma=\{\langle\check\alpha,p_\alpha\rangle\mid\alpha\in a\}\text{ and }\tau=\{\langle\check\alpha,q_\alpha\rangle\mid\alpha\in b\}$$
for $\{\alpha<\kappa\mid\dot F^{\dot G}(\alpha)\in\check{D}\}$ and $\{\alpha<\kappa\mid\dot F^{\dot G}(\alpha)\in\check{E}\}$ respectively.
By Lemma \ref{lemma:intersection}, for every $\QQ$-generic filter $G$ there is a $\QQ$-name $\mu$ such that $\mu^G=\sigma^G\cap\tau^G$.  
We claim that $M^\QQ$ contains no nice name for $\sigma^G\cap\tau^G$. Suppose for a contradiction that there are $p\in\QQ$
and a nice name $\nu\in M^\QQ$ such that $p\Vdash_\QQ\nu=\sigma\cap\tau$. By density of $\PP$ in $\QQ$, we may assume that $p\in\PP$.
Since $\nu$ is a nice name, it is of the form 
$$\nu=\bigcup_{\alpha<\kappa}\{\check\alpha\}\times A_\alpha,$$
where each $A_\alpha\subseteq\QQ$ is a set-sized antichain in $M$. 

Let $\alpha<\kappa$ be as in Claim \ref{claim:alpha}. We may assume that $A_\alpha$ only contains conditions
which are compatible with $p$. 

\begin{claim}\label{claim:Z_q,alpha^p}
For every $q\in A_\alpha$, 
$$Z_q=\{\gamma\in\On^M\mid\exists r\in\PP\,[r\leq_\QQ p,q \text{ and } r\Vdash_\PP\dot F(\check\alpha)=\check\gamma]\}$$
is a set in $M$.  
\end{claim}

\begin{proof}
We first consider $q\in A_\alpha\cap\PP$. 
By assumption $p$ and $q$ are compatible, and since $\PP$ is a Boolean algebra, $Z_q=Y_{p\wedge q,\alpha}$.
Assume for a contradiction that $Y_{p\wedge q,\alpha}$ is a proper class. Then by our construction, $Y_{p\wedge q,\alpha}\setminus D$ is a proper class as well.
Take $\gamma\in Y_{p\wedge q,\alpha}\setminus D$ and $r\leq_\PP p\wedge q$ with 
$r\Vdash_\PP\dot F(\check\alpha)=\check\gamma$. Let $G$ be $\QQ$-generic with $r\in G$. Then 
$p,q\in G$ and so $\alpha\in\nu^G=\sigma^G\cap\tau^G$. On the other hand, since $\dot F^G(\alpha)=\gamma\notin D$, we have $p_\alpha\notin G$ 
and hence $\alpha\notin\sigma^G$. This is a contradiction. 

Next, suppose that $q=p_\alpha$ and assume for a contradiction that $Z_{p_\alpha}$ is a proper class. Then $Y_{p,\alpha}$ is a proper class,
so $Y_{p,\alpha}\cap (D\setminus E)$ is also a proper class. Now let $r\leq_\PP p$ and $\gamma\in D\setminus E$ be such that 
$r\Vdash_\PP\dot F(\check\alpha)=\check\gamma$. Then $r\leq_\QQ p_\alpha$ by the definition of $p_\alpha$. If $G$ is $\QQ$-generic with $r\in G$, 
then $\alpha\in\nu^G$. Since $\gamma\notin E$, we have $\alpha\notin\tau^G$. This is a contradiction.

Next, suppose that $q=p_\beta\in A_\alpha$ for some $\beta\neq\alpha$. If there is some $\langle\gamma,\delta\rangle\in X_{p,\alpha,\beta}$
such that $\delta\in D$ but $\gamma\notin D\cap E$, then take $r\leq_\PP p$ such that $r\Vdash_\PP\dot F(\check\alpha)=\check\gamma\wedge\dot F(\check\beta)=\check\delta$
and a $\QQ$-generic filter containig $r$. Since $\delta\in D$ we have $p_\beta\in G$ and so $\alpha\in\nu^G$. On the other 
hand, $\dot F^G(\alpha)=\gamma\notin D\cap E$, so $\alpha\notin\sigma^G\cap\tau^G$. So there can be no such $\langle\gamma,\delta\rangle\in X_{p,\alpha,\beta}$.
Hence for all $\langle\gamma,\delta\rangle\in X_{p,\alpha,\beta}$, if $\delta\in D$ then $\gamma\in D\cap E$. 
Suppose for a contradiction that $Z_{p_\beta}$ is a proper class. Consider now the first stage $i$ such that $X_{p,\alpha,\beta}=C_i$.
Since $Y_{p,\alpha}$ is a proper class, there is $\langle\gamma,\delta\rangle\in X_{p,\alpha,\beta}$ such that $\gamma\notin F_i$.
If there is such a pair which additionally satisfies that $\delta\notin D_i'$, then we are in case (1)
in the recursive construction of $D$ and $E$ and so this would imply that $\gamma$ ends up in $D\setminus E$ and $\delta\in D$. 
But we have already shown that this is impossible. So for every pair $\langle\gamma,\delta\rangle\in X_{p,\alpha,\beta}$ with $\gamma\notin F_i$
we have $\delta\in D_i'$. In particular, if $\delta\in D$ then $\gamma\in F_i$. 
This implies that $Z_{p_\beta}\subseteq F_i$: Let $\gamma\in Z_{p,\beta}$. Then there is $\delta\in\On^M$ such that $\langle\gamma,\delta\rangle\in X_{p,\alpha,\beta}$. By the definition of $p_\beta$ we have $\beta\in D$ and hence by our previous argument $\gamma\in F_i$.
But then $Z_{p_\beta}$ is not a proper class, which is a contradiction. 

The case $q=q_\alpha$ is analogous to the case $q=p_\alpha$. 
Finally, suppose that $q=q_\beta$ for some $\beta\neq\alpha$. As in the previous case $q=p_\beta$, we can conclude that for all $\langle\gamma,\delta\rangle\in X_{p,\alpha,\beta}$, if $\delta\in E$ then $\gamma\in D\cap E$. As above, we assume that $Z_{q_\beta}$ is not in $M$ and we let $i$ be the least ordinal such that $C_i=X_{p,\alpha,\beta}$. After choosing $\gamma_0,\delta_0$ in the recursive construction of $D$ and $E$, there is still a pair $\langle\gamma_1,\delta_1\rangle$ such that $\gamma_1\notin F_i^+=F_i\cup\{\gamma_0,\delta_0\}$, since $Y_{p,\alpha}$ is a proper class. If possible, this pair is chosen such that $\delta_1\notin E_i'$. But then $\gamma_1$ is put into $E\setminus D$ and $\delta_1$ ends up in $E$. However, we have already argued that this cannot occur. But then for every such pair $\langle\gamma_1,\delta_1\rangle\in X_{p,\alpha,\beta}$ with $\gamma_1\notin F_i^+$, we have $\delta_1\in E_i'$, and so $Z_{q_\beta}$ is contained in the set $F_i^+$, which is a contradiction.
\end{proof}

By Claim \ref{claim:Z_q,alpha^p} and since $A_\alpha\in M$, we have that 
$$B=\bigcup_{q\in A_\alpha}Z_q\in M.$$
Since $Y_{p,\alpha}$ is a proper class, so is $Y_{p,\alpha}\cap D\cap E$ by our construction, and 
hence there must be some $\gamma\in (Y_{p,\alpha}\cap D\cap E)\setminus B$. Let now $q\leq_\PP p$ such that 
$q\Vdash_\PP\dot F(\check\alpha)=\check\gamma$ and take a $\QQ$-generic filter $G$ with $q\in G$. 
Then $\dot F^G(\alpha)=\gamma\in D\cap E$, so $\alpha\in\sigma^G\cap\tau^G$. Therefore there is some $r\in A_\alpha\cap G$. 
Take $s\in G$ with $s\leq_\QQ q,r$. Then $s\Vdash_\QQ\check\gamma=\dot F(\check\alpha)\in\check B$, contradicting the choice of $\gamma$. 
\end{proof}

\begin{theorem}\label{thm:ord-cc very nice}
Suppose that $\MM=\langle M,\C\rangle$ is a countable transitive model of $\GBC^-$. 
 A notion of class forcing $\PP$ for $\MM$ which satisfies the forcing theorem satisfies the $\On$-cc if and only if it is densely very nice.
\end{theorem}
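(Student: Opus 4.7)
The forward direction is comparatively direct, and rests on the fact that the $\On$-cc is preserved under dense embeddings in $\C$. Suppose $\pi\colon\PP\to\QQ$ is a dense embedding in $\C$ and $B\in\C$ is an antichain of $\QQ$. Using the set-like wellorder provided by $\GBC^-$, select for each $b\in B$ a condition $p_b\in\PP$ with $\pi(p_b)\leq_\QQ b$. The class $\{p_b:b\in B\}\in\C$ is an antichain of $\PP$, so by the $\On$-cc of $\PP$ it is a set in $M$; since $b\mapsto p_b$ is injective, $B$ is in definable bijection with a set and therefore also lies in $M$. Hence $\QQ$ satisfies the $\On$-cc, so by Example \ref{example:nice} (3) $\QQ$ is very nice. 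In particular $\PP$ is densely very nice.

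For the backward direction we argue contrapositively: assuming $\PP$ does not satisfy the $\On$-cc, we construct a dense extension $\QQ$ of $\PP$ which is not very nice. By Lemma \ref{lem:no sup}, fix an antichain $A\in\C$ of $\PP$ such that no $B\in M$ with $B\subseteq\PP$ satisfies $\sup_\PP A=\sup_\PP B$. The plan is to add finitely many suprema of subclasses of $A$ to $\PP$ (so that, by Lemma \ref{lemma:add suprema ft}, $\QQ$ still satisfies the forcing theorem) and to exhibit a $\sigma\in M^\QQ$ with $\one_\QQ\Vdash_\QQ\sigma\subseteq\check\gamma$ for some $\gamma\in\On^M$ such that no nice $\tau\in M^\QQ$ satisfies $\one_\QQ\Vdash_\QQ\sigma=\tau$.

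A first attempt—setting $\QQ=\PP\cup\{\sup A\}$ and $\sigma=\{\langle\check 0,\sup A\rangle\}$—is not sufficient: a case analysis on any nice candidate $\tau=\{\check 0\}\times B$ with $\one_\QQ\Vdash\sigma=\tau$ shows that either $B\subseteq\PP$, in which case one verifies $\sup_\PP A=\sup_\PP B$ (in the sense of the definition preceding Lemma \ref{lem:no sup}), contradicting the choice of $A$, or $B=\{\sup A\}$, in which case $\tau=\sigma$ is itself nice. Hence the construction must produce a $\sigma$ which is not already in nice form. Following the strategy of the proof of Theorem \ref{thm:pretame nice}, we partition (and if necessary refine using Lemma \ref{lem:no sup}) $A$ into proper class antichains $A_0,A_1$ each inheriting the no-set-equivalent property, and extend $\PP$ by adding $s_i=\sup A_i$ for $i=0,1$. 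The witness $\sigma$ is then built from the new suprema in such a way that producing a nice equivalent requires reproducing a proper class antichain by a set, for instance via an intersection construction in the spirit of Lemma \ref{lemma:intersection}.

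The main obstacle, and the heart of the argument, is the combinatorial case analysis showing that no nice $\tau=\bigcup_\alpha\{\check\alpha\}\times B_\alpha$ in $M^\QQ$ can represent $\sigma$, even when the $B_\alpha$ are allowed to use the new conditions $s_0,s_1$. This is the $\On$-cc analogue of the ``Claim $Z_{q,\alpha}$'' step in the proof of Theorem \ref{thm:pretame nice} and proceeds by tracing how the no-set-equivalent property of $A$, $A_0$, and $A_1$ constrains each candidate antichain $B_\alpha$: any permissible choice either falls entirely in $\PP$ (forcing a set equivalent of $A$, $A_0$, or $A_1$ and hence a contradiction) or is forced, by the antichain condition together with incompatibility of $s_0$ and $s_1$ in $\QQ$, into a trivial shape that cannot match $\sigma$.
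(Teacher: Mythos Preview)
Your forward direction is fine and matches the paper. The backward direction, however, has a genuine gap: you never actually specify the witness $\sigma$, and the setup you sketch cannot produce one.

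The problem is your two-piece partition. If $A_0,A_1$ are disjoint subclasses of an antichain, then in $\QQ$ the new conditions $s_0=\sup A_0$ and $s_1=\sup A_1$ are \emph{incompatible}: any $p\leq_\QQ s_0,s_1$ would have both $A_0$ and $A_1$ predense below it, but strengthening into $A_0$ yields a condition incompatible with all of $A_1$. Hence there is no nontrivial ``intersection-type'' name to be built from $s_0$ and $s_1$, and the analogy with Lemma \ref{lemma:intersection} breaks down. (That lemma is used in the proof of Theorem \ref{thm:pretame nice}, not here.)

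The paper's key idea is to use \emph{three} disjoint subclasses $A,D,E$ of the given antichain, each without a set equivalent, and to form the \emph{overlapping} classes $B=A\cup D$ and $C=A\cup E$; then $\sup B$ and $\sup C$ are compatible precisely along $A$. After first passing to the Boolean $M$-completion (so that set-sized suprema exist and one can argue that at least one of $\sup B$, $\sup C$ is genuinely new), one adds $\sup B$ and $\sup C$ and takes the \emph{nested} name
\[
\sigma=\{\langle\{\langle\check 0,\sup B\rangle\},\sup C\rangle\}.
\]
Now $1\in\sigma^G$ iff both $\sup B$ and $\sup C$ lie in $G$, which on the underlying antichain means $G$ meets $A$. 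For a putative nice $\tau=\{\check 0\}\times A_0\cup\{\check 1\}\times A_1$, one checks $A_1\subseteq\PP$ (neither new supremum can occur), so $\sup A_1$ exists in the $M$-complete Boolean algebra $\PP$, and then one verifies $\sup A_1=\sup A$, contradicting the choice of $A$. Your proposal is missing both the overlapping three-piece construction and the reduction to an $M$-complete Boolean algebra; without these, the argument does not go through.
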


\begin{proof}\setcounter{claim}{0}
 Suppose first that $\PP$ satisfies the $\On$-cc and $\PP$ embeds densely into $\QQ$. It is easy to see that then $\QQ$ also satisfies the $\On$-cc and so by Example \ref{example:nice}, (3) it is very nice. 

Conversely, suppose that $\PP$ contains a class-sized antichain. We would like to extend $\PP$ via a dense embedding to a partial order which is not very nice. Since $\PP$ satisfies the forcing theorem, $\PP$ has a Boolean $M$-completion. As we are only interested in a dense property, we may therefore assume that $\PP$ is already an $M$-complete Boolean algebra. 

By the proof of Lemma \ref{lem:no sup}, we can find three disjoint subclasses of our given class-sized antichain, each 
of which contains a subclass which does not have a supremum in $\PP$. Denote these subclasses without suprema by $A,D$ and $E$, and let $B=A\cup D$ and $C=A\cup E$. 

\begin{claim}
 At least one of $\sup B$ and $\sup C$ does not exist in $\PP$.
\end{claim}

\begin{proof}
We show that if both $\sup B$ and $\sup C$ exist, then so does $\sup A$, contradicting our choice of $A$. Since $\PP$ is an $M$-complete Boolean algebra,
if $\sup B$ and $\sup C$ exist, then so does $p=\sup B\wedge\sup C$. We claim that $p$ is already the supremum of $A$. 
It is clear that every element of $A$ is below $p$. It remains to check that $A$ is predense below $p$. 
Let $q\leq_\PP p$. Since $B$ is predense below $q$, there are $r\leq_\PP q$ and $b\in B$ with $r\leq_\PP b$. Since $C$ is
predense below $r$, there are $s\leq_\PP r$ and $c\in C$ with $s\leq_\PP c$. In particular, $b$ and $c$ are compatible.
But they both belong to the antichain $B\cup C$, so $b=c\in B\cap C=A$. 
\end{proof}

Let $\QQ$ be the forcing notion obtained from $\PP$ by adding $\sup B$ and $\sup C$. 
By Lemma \ref{lemma:add suprema ft}, $\QQ$ satisfies the forcing theorem. Moreover, it follows from the separativity of $\PP$ that $\QQ$ is separative. 
We show that $\QQ$ is not very nice. Consider the $\QQ$-name 
$$\sigma=\{\langle\{\langle\check0,\sup B\rangle\},\sup C\rangle\}.$$ By definition, $\one_\QQ\Vdash_\QQ\sigma\subseteq\check2$. 

\begin{claim}
There is no nice $\QQ$-name $\tau$ such that $\one_\QQ\Vdash_\QQ\sigma=\tau$.
\end{claim}

\begin{proof}
Suppose for a contradiction that $\tau=\{\check0\}\times A_0\cup\{\check1\}\times A_1$, where $A_0,A_1\in M$ are antichains of $\QQ$, and 
$\one_\QQ\Vdash_\QQ\sigma=\tau$. Observe that $A_1\subseteq\PP$, since if for example $\sup B\in A_1$ and $G$ is $\QQ$-generic 
with $\sup B\in G$ and $\sup C\notin G$, then $1\in\tau^G\setminus\sigma^G$. The same works for $\sup C$. Therefore $\sup A_1$ exists in $\PP$.
We claim that $\sup A_1$ is the supremum of $A$.

Firstly, we show that every element 
of $A$ is below $\sup A_1$. Suppose for a contradiction that there is $a\in A$ with $a\nleq_\QQ\sup A_1$. Then by separativity
of $\QQ$ there is $p\leq_\PP a$ with $p\bot_\PP\sup A_1$. In particular, $p$ is incompatible with every element of $A_1$. 
Hence if $G$ is a $\QQ$-generic filter with $p\in G$ then $1\in\sigma^G\setminus\tau^G$, contradicting our assumptions on $\sigma$ and $\tau$. Secondly, we check that 
$A$ is predense below $\sup A_1$. Assume, towards a contradiction, that there is $p\leq_\PP\sup A_1$ with $p\bot_\PP a$ for each 
$a\in A$. Now $A_1$ is predense below $p$, so there exist $q\leq_\PP p$ and $r\in A_1$ with $q\leq_\PP r$. Again, this yields
that for any $\QQ$-generic filter $G$ with $q\in G$, $1\in\tau^G$ but $A\cap G=\emptyset$, so it is impossible that 
$\sup B$ and $\sup C$ are both in $G$. Hence $1\notin\sigma^G$, contradicting our assumptions on $\sigma$ and $\tau$.

We have thus shown that $\sup A$ exists in $\PP$, contradicting our choice of $A$.
\end{proof}
This proves that $\QQ$ is not very nice. 
\end{proof}

 The proof of Theorem \ref{thm:ord-cc very nice} actually shows that every notion of forcing $\PP$ which satisfies the
 forcing theorem but not the $\On$-cc, can be densely embedded into a notion of class forcing which satisfies the
 forcing theorem and is nice but not very nice. To see this, it remains to check that the partial order $\QQ$ constructed 
 above is nice. This follows from the following more general result:
 
\begin{lemma} Suppose that $\PP$ is a notion of class forcing which satisfies the forcing theorem. 
If $\PP$ is nice and $\QQ$ is obtained from $\PP$ by adding the supremum of some subclass $A\in\C$ of $\PP$, then 
$\QQ$ is also nice. 
\end{lemma}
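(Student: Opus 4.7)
The plan is to reduce niceness of $\QQ$ to niceness of $\PP$. Note that $\QQ$ satisfies the forcing theorem by Lemma \ref{lemma:add suprema ft}, and, since $\PP$ is dense in $\QQ$, every $\PP$-antichain is also a $\QQ$-antichain: if $p_1,p_2\in\PP$ had a common $\QQ$-extension, then by density some $p\in\PP$ would lie below it, making them $\PP$-compatible. Consequently, any nice $\PP$-name, viewed as a $\QQ$-name, is again a nice $\QQ$-name, and its evaluation by a $\QQ$-generic $G$ coincides with its evaluation by $H:=G\cap\PP$. Hence given $\sigma\in M^\QQ$ and $\QQ$-generic $G$ with $\sigma^G\subseteq\gamma$, it suffices to exhibit some $\mu\in M^\PP$ with $\mu^H=\sigma^G$, for then niceness of $\PP$ applied to $\mu$ and $H$ delivers a nice $\PP$-name that is simultaneously the desired nice $\QQ$-name for $\sigma^G$.

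First, I would check that the class
\[D=\{a\in\PP\mid a\leq_\QQ\sup A\text{ or }a\perp_\QQ\sup A\}\]
is dense in $\PP$: for $p\in\PP$, either $A$ is predense below $p$ in $\PP$, so that $p\leq_\QQ\sup A$, or else there is some $a\leq_\PP p$ incompatible in $\PP$ with every element of $A$, in which case any hypothetical common $\QQ$-extension of $a$ and $\sup A$ would, by density of $\PP$ in $\QQ$ and the defining property of $\sup A$, admit a $\PP$-refinement below some element of $A$, contradicting the choice of $a$. By genericity, pick some $a\in H\cap D$.

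Next, for every $\mu\in M^\QQ$, define a $\PP$-name $\mu|_a\in M^\PP$ by recursion on name rank via
\[\mu|_a=\{\langle\nu|_a,q\rangle\mid\langle\nu,q\rangle\in\mu,\,q\in\PP,\,q\parallel_\PP a\}\cup X_\mu,\]
where $X_\mu=\{\langle\nu|_a,a\rangle\mid\langle\nu,\sup A\rangle\in\mu\}$ if $a\leq_\QQ\sup A$, and $X_\mu=\emptyset$ otherwise. A straightforward induction on name rank then yields $(\mu|_a)^H=\mu^G$: the conditions $q$ appearing in $\mu$ that lie in $G$ are either $\PP$-conditions, which must then lie in $H$ and hence be compatible with $a$, or the single additional condition $\sup A$, which lies in $G$ exactly when $a\leq_\QQ\sup A$ (using $a\in G\cap D$). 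Applying this to $\mu=\sigma$ produces the required $\PP$-name.

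I expect the main obstacle to be the correct bookkeeping in the recursive definition of $\mu\mapsto\mu|_a$, in particular ensuring that the clause corresponding to $\sup A$ is included precisely when $a\leq_\QQ\sup A$, so that the induction $(\mu|_a)^H=\mu^G$ works uniformly for all subnames $\nu$ appearing inside $\sigma$. Restricting $a$ to the dense class $D$ is exactly what makes this dichotomy behave coherently throughout the recursion.
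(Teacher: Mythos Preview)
Your proof is correct and follows essentially the same route as the paper's. Both arguments reduce a $\QQ$-name to a $\PP$-name by case-splitting on whether the generic lies below $\sup A$ or is incompatible with it, and then invoke niceness of $\PP$; the paper packages the two cases as the global transformations $\sigma^+$ (replace $\sup A$ by $\one_\PP$) and $\sigma^-$ (delete pairs with second coordinate $\sup A$) and argues via density below a condition forcing $\sigma\subseteq\check\gamma$, whereas you first pick a witness $a\in H\cap D$ deciding the case and then perform a single recursive transformation $\mu\mapsto\mu|_a$ depending on that choice. The additional restriction $q\parallel_\PP a$ in your definition is harmless (and indeed unnecessary) for the induction, and your use of $a$ in place of $\one_\PP$ in the $X_\mu$-clause is an inessential variant of the paper's $\sigma^+$.
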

\begin{proof}
Let $\sigma\in M^\QQ$ and $p\Vdash_\QQ\sigma\subseteq\check\gamma$ for some $p\in\QQ$ and $\gamma\in\On^M$. Let $\sigma^+$ denote the $\PP$-name obtained from $\sigma$ by replacing every ocurrence of $\sup A$ in $\tc(\sigma)$ by $\one_\PP$, and let $\sigma^-$ be defined recursively by $\sigma^-=\{\langle\tau^-,p\rangle\in\sigma\mid p\neq\sup A\}$. 
Let $q\leq_\QQ p$. Without loss of generality, we can assume that $q\in\PP$. If $q$ is incompatible with every element of $A$, 
then $q\Vdash_\QQ\sigma=\sigma^-$. But then there are $r\leq_\PP q$ and a nice $\PP$-name $\tau$ such that $r\Vdash_\PP\sigma^-=\tau$ and so
$r\Vdash_\QQ\sigma=\tau$. If there is some $a\in A$ such that $q$ is compatible with $a$, let $r\leq_\PP q,a$. Then 
$r\Vdash_\QQ\sigma=\sigma^+$ and so as in the previous case we can strengthen $r$ to some $s$ which witnesses that $\sigma^+$
has a nice $\PP$-name. 
\end{proof}

\begin{corollary} Suppose that $\MM=\langle M,\C\rangle$ is a countable transitive model of $\GBC^-$.
Every notion of class forcing which satisfies the forcing theorem but not the $\On$-cc is dense in a notion of 
class forcing which is nice but not very nice. \hfill\qedsymbol
\end{corollary}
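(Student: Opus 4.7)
My plan is to combine the construction of $\QQ$ from the proof of Theorem \ref{thm:ord-cc very nice} with two applications of the Lemma immediately preceding the corollary, after first passing to an $M$-complete Boolean algebra so that the starting forcing is already nice.

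\medskip

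More precisely, suppose $\PP$ satisfies the forcing theorem but not the $\On$-cc. First, I would replace $\PP$ by its Boolean $M$-completion $\BB$, which exists by \cite[Theorem 5.5]{ClassForcing} and into which $\PP$ embeds densely via some $\pi\in\C$. By Example \ref{example:nice}(4), the $M$-complete Boolean algebra $\BB$ is very nice, hence nice. Moreover, $\BB$ still fails to satisfy the $\On$-cc, since any class-sized antichain of $\PP$ remains a class-sized antichain when mapped into $\BB$ (dense embeddings preserve incompatibility). Thus it suffices to exhibit a dense extension $\QQ$ of $\BB$ that is nice but not very nice.

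\medskip

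Now I would invoke the construction from the proof of Theorem \ref{thm:ord-cc very nice}, carried out with $\BB$ in place of $\PP$. That construction uses Lemma \ref{lem:no sup} to pick three disjoint subclasses $A,D,E\in\C$ of an antichain of $\BB$ such that no subclass among them admits a supremum in $\BB$, and then sets $B=A\cup D$, $C=A\cup E$; at least one of $\sup B$, $\sup C$ fails to exist in $\BB$. Let $\BB_1$ be the forcing obtained from $\BB$ by adding $\sup B$, and let $\QQ$ be the forcing obtained from $\BB_1$ by adding $\sup C$ (note that $C\subseteq\BB\subseteq\BB_1$, so this is a legitimate application of the supremum-adding operation). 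By Lemma \ref{lemma:add suprema ft}, applied twice, $\QQ$ satisfies the forcing theorem. The proof of Theorem \ref{thm:ord-cc very nice} shows that the name $\sigma=\{\langle\{\langle\check 0,\sup B\rangle\},\sup C\rangle\}$ witnesses that $\QQ$ is not very nice.

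\medskip

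It remains to verify that $\QQ$ is nice, and this is exactly where the preceding Lemma is used: since $\BB$ is nice, the Lemma applied to the single-supremum extension $\BB\subseteq\BB_1$ shows $\BB_1$ is nice, and a second application of the Lemma to $\BB_1\subseteq\QQ$ shows $\QQ$ is nice. Since $\PP$ densely embeds into $\BB$ which is dense in $\BB_1$ which is dense in $\QQ$, we conclude that $\PP$ is dense in $\QQ$, as required. The main point to take care with is purely bookkeeping: checking that the Lemma's hypothesis ``$\PP$ is nice'' is legitimately available at each step of the two-step extension, which is handled by starting from the very nice algebra $\BB$ and propagating niceness forward one supremum at a time.
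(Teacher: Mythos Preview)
Your proposal is correct and follows essentially the same route as the paper: pass to the Boolean $M$-completion $\BB$ (which is very nice, hence nice), carry out the construction from Theorem~\ref{thm:ord-cc very nice} to obtain a non--very-nice extension $\QQ$, and apply the preceding Lemma twice to propagate niceness through the two single-supremum extensions $\BB\subseteq\BB_1\subseteq\QQ$. The only cosmetic difference is that the paper adds $\sup B$ and $\sup C$ simultaneously whereas you add them one at a time, but since the Lemma handles one supremum at a time, your two-step bookkeeping is exactly what is needed to make the argument work.
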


\section{Open Questions}

Assume that $\MM$ is a countable transitive model of $\GB^-$ and that $\PP$ is a notion of class forcing for $\MM$.
By Corollary \ref{cor:pretame axioms} and Theorem \ref{thm:pretame ft}, if $\PP$ preserves either Replacement or Collection, then $\PP$ satisfies the forcing theorem. By Theorem \ref{thm:sep rep}, if $\PP$ preserves Separation and satisfies the forcing theorem, then $\PP$ preserves Collection, and hence also Replacement. It would thus be very interesting to know the answer to the following question.

\begin{question}
   Assume that $\MM\models\GBC$ and that $\PP$ preserves Separation. Does this imply that $\PP$ satisfies the forcing theorem?
\end{question}

In Section \ref{sec:EMP}, we proved that over models of $\GBC^-$, pretameness is equivalent to the EMP for partial orders which satisfy the forcing theorem.

\begin{question}
 Assume $\PP$ is a notion of class forcing for $\MM$ which does not satisfy the forcing theorem. Does this imply that the EMP fails for $\PP$?
\end{question}

Most of our characterizations of pretameness use $\GBC^-$ as a base theory.

\begin{question}
  Is the assumption of a set-like wellorder of $M$ necessary for these results? Can it be replaced by the assumption of the existence of a hierarchy for some of them? Do any of these characterizations work over the base theory $\GB^-$?
\end{question}

In order to prove Theorem \ref{thm:pretame nice}, we work in $\KM$, since our proof uses that the forcing theorem is preserved when adding infinitely many suprema to a given notion of class forcing that satisfies the forcing theorem. $\KM$ implies that every notion of class forcing satisfies the forcing theorem. The following question arises. 

\begin{question}
  Can pretameness be characterized in terms of the existence of nice names for sets of ordinals in $\GBC$?
\end{question}

Every proof of the failure of the forcing theorem for first order definable notions of class forcing that is known to the authors (see \cite[Section 7]{ClassForcing} and Theorem \ref{thm:failure ft} of the present paper) uses the nonexistence of a first-order truth predicate in the ground model. This motivates the following question. 

\begin{question}
  Assume $\MM=\langle M,\C\rangle\models\GB^-$ such that $\C$ contains a first order truth predicate for $M$. Does every notion of class forcing that is (first-order) definable over $\langle M,\in\rangle$ satisfy the forcing theorem over $\MM$?
\end{question}

Let us emphasize that we ask for \emph{definable} notions of class forcing in the above. Let $\FF$ denote the notion of class forcing provided in \cite[Definition 2.4]{ClassForcing}, for which the forcing relation is not definable. It should be easy to define a version of $\FF$ relative to a first order truth predicate $T$, not only coding the $\in$-relation of the ground model by a binary relation on $\omega$, but also coding the truth predicate $T$ by a unary relation on $\omega$. If the forcing theorem held for that version of $\FF$, this would yield a truth predicate for the structure $\langle M,\in,T\rangle$ inside of $\C$, an assumption that is strictly stronger than asking for the existence of a first order truth predicate in $\C$. So if $\MM$ is as in the above question, then there is a notion of class forcing for $\MM$ that does not satisfy the forcing theorem, however it is not definable over $\langle M,\in\rangle$.

\bibliographystyle{alpha}
\bibliography{class-forcing}
  
\end{document}